\newcommand{\Glambda}{\mathcal{G}_{\lambda,c}}
\newcommand{\ve}{\varepsilon}
\newcommand{\q}{\tilde q^{\ve}_{\delta,k}}
\theoremstyle{plain}
\newtheorem{lemma}{Lemma}
\newtheorem{prop}{Proposition}
\newtheorem{thm}{Theorem}
\newtheorem{defn}{Definition}
\author{Stefano Baranzini, Gian Marco Canneori, Susanna Terracini}
\title{Frozen planet orbits for the $n$-electron atom}
\date{}
\address{Dipartimento di Matematica ``G. Peano''
		\newline\indent
		Universit\`a degli Studi di Torino
		\newline\indent
		Via Carlo Alberto 10, 10123 Torino, Italy\\}
	\email{stefano.baranzini@unito.it}
	\email{gianmarco.canneori@unito.it}
	\email{susanna.terracini@unito.it}
	\date{\today}
	\keywords{Critical Points Theory, Helium atom, frozen orbits}
	\thanks{The authors are partially supported by INDAM-GNAMPA research group. The first author is partially supported by the PRIN 2022 project 2022FPZEES -- \emph{Stability in Hamiltonian Dynamics and Beyond}. The third author is partially supported by the PRIN 2022 project 20227HX33Z -- \emph{Pattern formation in nonlinear phenomena}}
	\thanks{We wish to sincerely thank the anonymous referees for carefully reading our paper and giving useful comments and suggestions that have greatly improved our paper}
	\subjclass[2020] {
		47J30, 
		34C25, 
		70F10, 
		34D15, 
		81V45 
	}
\begin{document}	

	\maketitle
	\begin{abstract}
		We investigate periodic trajectories in a classical system consisting of multiple mutually repelling electrons constrained to move along a half-line, with an attractive nucleus fixed at the origin. Adopting a variational framework, we seek critical points of the associated Lagrangian action functional using a modified Lusternik-Schnirelmann theory for manifolds with boundary. Furthermore, in the limit where the electron charges vanish, we demonstrate that frozen planet orbits converge to segments of a brake orbit in a Kepler-type problem, thereby drawing a strong analogy with Schubart orbits in the gravitational $N$-body problem.
	\end{abstract}

\section{Introduction}
We consider the classical Hamiltonian of an atom with $n$ electrons, where the nucleus is treated as infinitely massive and fixed at the origin. Electrons interact through Coulomb (electrostatic) and Newtonian gravitational forces, while relativistic and quantum effects are neglected:
\[
H = \sum_{i=1}^{n} \left( \frac{\mathbf{p}_i^2}{2} - \frac{Z(1 + G)}{|\mathbf{r}_i|} \right)
+ \sum_{1 \le i < j \le n} \left( \frac{1 - G}{|\mathbf{r}_i - \mathbf{r}_j|} \right).
\]
Here, $Z$ denotes the atomic number (nuclear charge), and $\mathbf{r}_i$ the position of the $i$th electron. The Hamiltonian is expressed in atomic units (a.u.), where $\hbar = 1$, $m = 1$, $e = 1$, and $4\pi\epsilon_0 = 1$. The unit of energy is the Hartree (\( \approx 27.2 \) eV), and distances are measured in Bohr radii. The gravitational constant $G$ is extremely small in atomic units:
\[
G \approx 1.21 \times 10^{-45} \ \text{a.u.}
\]
Thus, gravitational terms are negligible compared to the electrostatic interactions but are included for completeness. For neutral atoms, $n = Z$; for ions, $n \ne Z$, with cations ($Z > n$) and anions ($Z < n$). The total atomic charge is $Z - n$.
This work investigates periodic trajectories in a one-dimensional model of an atom with $n$ electrons constrained to a half-line. The nucleus remains fixed at the origin, while the electrons experience mutual repulsion and are attracted to the nucleus via Coulomb's law. Normalizing the Coulomb constant to 1 yields the system of differential equations:
\begin{equation}
\label{eq:helium_newton}
\ddot{q}_i = -\frac{Z(1+G)}{q_i^2} + \sum_{j=1}^{i-1} \frac{1-G}{(q_i-q_j)^2}-\sum_{j = i+1}^n \frac{1-G}{(q_j-q_i)^2},
\end{equation}
with the constraint $q_1<q_2<\dots<q_n$.
We seek generalized periodic solutions where the first electron, $q_1$, reflects elastically upon colliding with the nucleus. This reflection rule aligns with the classical Levi-Civita regularization of collisions in the Kepler problem (cf. \cite{BarOrtVer,RebSim} and references therein). Any bound state must include such a collision due to the one-dimensional confinement of electrons.
 
In this paper, we focus on periodic orbits that exhibit brake symmetry. To be precise, we look for solutions of \eqref{eq:helium_newton} on $[0,T]$ with the following boundary conditions:
\[
	\begin{cases}
	q_1(0) =0,\ \dot{q}_1(T) =0 \\
	\dot{q}_i(0)= \dot{q}_i(T)=0 \text{ for all  }i \ge2
	\end{cases}
\]
For the $2$-electrons problem ($n=2$), such solutions have been recently studied in \cite{zhao,Cieliebak_Langmuir,Cieliebak_Non_deg,Cieliebak_variational} and, by the authors, in \cite{helium_frozen}. In the literature, inspired by similarly shaped solutions in orbital mechanics, they were called \emph{frozen planet orbits}. Indeed, while $q_1$ bounces repeatedly against the nucleus, the remaining electrons oscillate slowly and far off from this binary cluster. 

We can also draw a parallel with the one dimensional gravitational $n-$body problem. Frozen planet orbits are indeed the natural counterpart of Schubart orbits (cf. \cite{schubart1956}). We expect that, as collinear periodic solutions, they are isolated as the Schubart's ones. Thus, an intriguing open problem is whether they can be continued in a family of \emph{symmetric planar} periodic trajectories with non-zero angular momentum, as shown to be the case for Schubart orbits (see \cite{henon1976}). We refer the reader to \cite{davies,fenucci} for numerical evidence of the existence of planar and spatial symmetric orbits of \eqref{eq:helium_newton}, and to \cite{spatialRelativeEquilibria} for a computer-assisted study of its central configurations. 

Following the approach in \cite{helium_frozen}, we adopt a framework based on critical points theory to establish frozen planet orbits.  This perspective enables us to treat a broader class of models beyond equation \eqref{eq:helium_newton}, replacing purely Newtonian interactions with more general potentials that satisfy weaker homogeneity assumptions, including those involving different negative powers of the distance. The flexibility to consider interactions beyond the classical Coulomb case further allows for a more abstract formulation of the problem, helping to isolate the fundamental properties that ensure the existence of solutions.

Thus, we will investigate the following general system of singular differential equations:
\begin{equation}
	\label{eq:helium_equation}
	\ddot{q}_i = f_i'(\vert q_i \vert ) - \sum_{j=1}^{i-1} g'_{ij}(\vert q_i-q_j\vert )+\sum_{j = i+1}^n g'_{ij}(\vert q_j-q_i \vert).
\end{equation} 
The function $f'_i$ represents the attraction force exerted on $q_i$ by the nucleus, whereas $g'_{ij}$ is the repulsion force between the $i$-th  and $j$-th electrons. We will assume that $g_{ij} = g_{ji}$ for all $i \ne j$ and $f_i,g_{ij} :(0,\infty)\to (0,\infty)$ are $C^{1,1}(0,\infty)$ and satisfy the following assumptions:
\begin{align*}
	\label{assumption:values_f_g}
	&\begin{aligned}
	&f_i(s),g_{ij}(s),f_i'(s),g_{ij}'(s) \to 0,\ \text{ as } s \to +\infty \text{ for all }i,j, \\
	&f_i'(s),g_{ij}'(s) < 0,\  \text{ for all } s>0 \text{ and for all }i,j.
	\end{aligned} \tag{H1} \\
	&\\
	\label{assumption:homogenuity}
    &\begin{aligned}
    &\exists \alpha \in (0,2) \text{ such that }\\
    &\forall i, \forall s >0, \quad  s f_i'(s)+\alpha f_i(s)\ge 0, \\ 
	&\forall i<j, \forall s >0, \quad s g_{ij}'(s)+\alpha g_{ij}(s)\le 0.
	\end{aligned} \tag{H2}\\
	\label{assumption:attraction_infinity}
	&\forall j, \,\forall t_0 >0\ \exists s_0>t_0  : \forall 0<t_i\le t_0, \forall s>s_0, \quad f'_j(s)-\sum_{i=1}^{j-1}g_{ij}'(s-t_i)< 0. \tag{H3}\\
	\label{assumption:convexity}
	&\begin{aligned}
        &\forall j,\ \forall s>0,\quad f_j\;\text{is convex}, \\
        &\exists\ C>0\ \text{such that}\quad f'_{j}(t) \ge f'_l(s)-C,\ \text{ for any } j> l \text{ and } 0\le s\le t.\end{aligned} \tag{H4}
\end{align*}
Note that potentials  $f_i(s)=a_i/s^{\alpha_i},\, g_{ij}(s)=b_{ij}/s^{\beta_{ij}}$, with $a_i, b_{ij}>0$  always fulfill these assumptions provided that the following conditions hold:  
\begin{itemize}
     \item  $\alpha_l\ge\alpha_j$ for any $j\ge l$ and $a_j\le a_l$ if $\alpha_l=\alpha_j$;
	\item for all $i$, $\alpha_i\in (0,2)$ and  $\beta_{ij}\geq\alpha^*:=\max_{i}\alpha_i$;
	\item if the set $S_{j} = \{i:\alpha_j = \alpha^*, \beta_{ji}=\alpha^*\}$  is non empty, then $\sum\limits_{i\in S_j}b_{ji}<a_j$.
\end{itemize}
In particular, the potentials appearing in system~\eqref{eq:helium_newton} also satisfy assumptions~\eqref{assumption:values_f_g}, \eqref{assumption:homogenuity}, and~\eqref{assumption:convexity}. 
Regarding assumption~\eqref{assumption:attraction_infinity}, it holds provided that $Z\geq n-1$. In other words, the condition is met for all neutral atoms and cations, while anions must carry a single negative charge (i.e., $Z=n-1$); note that in this latter case, the gravitational potential cannot be neglected in order to ensure that $Z (1+ G) > (n - 1)(1 - G)$. 
Nevertheless, the class of admissible potentials remains substantially more general.

Assumption \eqref{assumption:homogenuity} is a classical Ambrosetti-Rabinowitz-type condition already present in literature (e.g., see \cite{MinMaxRabinowitz}). Condition \eqref{assumption:attraction_infinity} ensures that, independently on the initial position of the electrons, the attractive force prevails at infinity. Condition \eqref{assumption:convexity} has a physical interpretation too: if we assume electrons to be ordered increasingly, so that $q_i<q_{i+1}$, it states that the nucleus attraction is stronger on the ones closer to the origin. 

Our first result states the existence of frozen planet orbits for generalised $n$-electron problems.
\begin{thm}
	\label{thm:solution_helium}
	For any $T>0$ and any choice of $f_i,g_{ij}$ satisfying assumptions \eqref{assumption:values_f_g}-\eqref{assumption:convexity}, there exists a solution of \eqref{eq:helium_equation} such that:
	\begin{enumerate}
		\item $q_i(t)<q_{i+1}(t)$ for all $t \in[0,T]$ and $i=1,\dots n$,
		\item $\dot{q}_i(0) =\dot{q}_i(T)=0$ for all  $i\ge 2$,
		\item $q_1(0)=0$,  $\dot{q}_1(T)=0$.	
	\end{enumerate} 
\end{thm}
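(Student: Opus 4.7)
My approach is variational: I would seek critical points of the Lagrangian action
\begin{equation*}
\mathcal{A}(q) = \int_0^T \left(\frac{1}{2}\sum_{i=1}^n \dot{q}_i^2 + \sum_{i=1}^n f_i(q_i) - \sum_{1 \le i < j \le n} g_{ij}(q_j - q_i)\right) dt
\end{equation*}
on the space $\Lambda$ of paths $q \in H^1([0,T];\mathbb{R}^n)$ satisfying the Dirichlet condition $q_1(0)=0$ together with the ordering constraint $0 \le q_1(t) \le q_2(t) \le \dots \le q_n(t)$ for every $t$. The remaining brake boundary conditions ($\dot{q}_i(0)=\dot{q}_i(T)=0$ for $i \ge 2$ and $\dot{q}_1(T)=0$) should appear as natural endpoint conditions coming from free variations. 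Since $\Lambda$ is a convex subset of a Hilbert space whose stratified boundary corresponds to binary collisions $q_i = q_{i+1}$ and to $q_1 = 0$, I would cast the problem within the announced Lusternik--Schnirelmann framework for manifolds with boundary.

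The principal difficulty is that $\mathcal{A}$ is not bounded from below on $\Lambda$: as two consecutive electrons approach each other, $-g_{ij}(q_j-q_i)$ drives $\mathcal{A}$ to $-\infty$, so direct minimization fails. I would proceed by approximation, regularizing each repulsive potential to a smooth $g_{ij}^{\ve}$ bounded by $1/\ve$ and restricting the admissible domain to a large ball $\Lambda_R = \Lambda \cap \{\|q\|_\infty \le R\}$. Assumption (H3) guarantees that for $R$ large enough the nuclear attraction prevails at the scale $R$, so that the critical points of the regularized action stay strictly inside $\Lambda_R$. On $\Lambda_R$ the regularized functional is smooth and bounded, and I would apply a category or linking min-max argument to produce a critical configuration $q_\ve$. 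The Ambrosetti--Rabinowitz type homogeneity (H2) then yields uniform action and $H^1$ bounds, from which one extracts a weak limit $q^\star$ as $\ve \to 0$.

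The central and most delicate step, which I expect to be the main obstacle, is the no-collision property: that $q_i^\star(t) < q_{i+1}^\star(t)$ throughout $[0,T]$ and that $q_1^\star(t) > 0$ for $t \in (0,T]$. Since $\alpha \in (0,2)$ places us in the weak-force regime, no Gordon-type strong-force lemma is available and the argument must exploit the structure more finely. I would attempt a local blow-up analysis at a putative collision time, combined with an ordering/rearrangement argument based on the convexity inequality in (H4): if two consecutive electrons $q_i^\star$ and $q_{i+1}^\star$ were to touch at some $t_0$, the stronger attraction felt by $q_i^\star$ should permit the construction of an admissible competitor with strictly lower (regularized) action, contradicting the criticality of $q_\ve$ uniformly in $\ve$. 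An analogous argument, using the Kepler-like attraction encoded in $f_1$ and the comparison provided by (H4), should rule out nucleus collisions at interior times.

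Once the limit configuration is collision-free on the interior of $\Lambda$, standard regularity shows that $q^\star$ solves the Euler--Lagrange system \eqref{eq:helium_equation} classically, and the natural boundary conditions from free variations deliver the brake conditions required by the theorem. The prescribed Dirichlet collision $q_1^\star(0)=0$ is interpreted in the generalized sense of Levi-Civita regularization of the Kepler-type singularity of $f_1$, which produces the elastic reflection of the innermost electron off the nucleus and completes the construction of a frozen planet orbit.
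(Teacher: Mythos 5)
Your outline captures the general variational spirit (approximation, min-max on a manifold with boundary, a priori bounds, passage to the limit), but there are genuine gaps that would prevent the argument from going through as you describe it. First, you regularize the \emph{repulsive} potentials $g_{ij}$, keeping $f_1$ singular; but the domain carries the Dirichlet constraint $q_1(0)=0$, so the unregularized attraction $f_1(q_1)$ makes the action fail to be $C^1$ (and possibly infinite) on exactly the constraint set you need. The paper instead smooths the \emph{attractive} potentials $f_i$ (replacing them by $C^{1,1}$ functions $f_i^\ve$ that agree with $f_i$ away from the origin) and keeps the singular $g_{ij}$ intact. This is not a cosmetic difference: the singularity of $g_{ij}$ is the quantity that makes the constraint work, and the paper pairs $\mathcal{A}_\ve$ with an auxiliary functional $\mathcal{G}$ containing a strong-force term $\int(q_{i+1}-q_i)^{-2}$ whose sublevels stay away from electron--electron collisions — this is essential to verify the hypotheses of the boundary Lusternik--Schnirelmann lemma (Palais--Smale relative to sublevels of $\mathcal{G}$, the gradient-angle condition $\nabla\mathcal{A}\neq\lambda\nabla\mathcal{G}$, and non-deformability). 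Your sketch omits this constraint functional entirely, which is where most of the technical work lies.

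Second, and more seriously, your proposed no-collision argument does not work: the critical points produced by min-max are \emph{saddle} points, so exhibiting an admissible competitor with lower action does not contradict criticality, only minimality. A rearrangement/comparison argument based on (H4) therefore cannot by itself rule out collisions in the limit. The paper's actual mechanism is different: using (H2) one derives a uniform bound on $\|\dot q^\ve\|_2$ and hence on the energy $h_\ve$ (Lemma \ref{lemma:uniform_bounds}); a collision between electrons at an interior time $t^*>0$ would force $h_\ve\to+\infty$, a contradiction. Endpoint collisions between electrons at $t=0$ with $\bar q_j(0)=\bar q_{j-1}(0)>0$ are excluded via a \emph{cluster energy} estimate $h_{jl}$, integrating its time derivative and using the $L^2$ bound on velocities; and a collision with the origin with index $j>1$ is excluded using the convexity of $\bar q_j-\bar q_1$ (from (H4)) combined with the strong-force behaviour of $g'_{1j}$ coming from (H2). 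None of these steps are competitor constructions; they are a priori bounds and ODE estimates on the limit trajectory. Your blow-up idea could in principle be developed, but the paper does not need it, and the "lower-action competitor contradicts criticality" step as written would fail.
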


\begin{figure}[t]
    \centering
	\resizebox{.65\textwidth}{!}{
	\tikzset{every picture/.style={line width=0.75pt}} 

\begin{tikzpicture}[x=0.75pt,y=0.75pt,yscale=-1,xscale=1]

\draw  [draw opacity=0][fill={rgb, 255:red, 206; green, 224; blue, 245 }  ,fill opacity=1 ][dash pattern={on 1.69pt off 2.76pt}][line width=1.5]  (92,86.07) .. controls (92,83.35) and (94.2,81.15) .. (96.93,81.15) .. controls (99.65,81.15) and (101.85,83.35) .. (101.85,86.07) .. controls (101.85,88.8) and (99.65,91) .. (96.93,91) .. controls (94.2,91) and (92,88.8) .. (92,86.07) -- cycle ;
\draw    (99,86) -- (505.33,86) ;
\draw  [color={rgb, 255:red, 48; green, 65; blue, 69 }  ,draw opacity=1 ][fill={rgb, 255:red, 208; green, 2; blue, 27 }  ,fill opacity=1 ] (94,86) .. controls (94,84.62) and (95.12,83.5) .. (96.5,83.5) .. controls (97.88,83.5) and (99,84.62) .. (99,86) .. controls (99,87.38) and (97.88,88.5) .. (96.5,88.5) .. controls (95.12,88.5) and (94,87.38) .. (94,86) -- cycle ;
\draw  [dash pattern={on 0.84pt off 2.51pt}]  (505.33,86) -- (529.33,86) ;
\draw  [draw opacity=0][fill={rgb, 255:red, 206; green, 224; blue, 245 }  ,fill opacity=1 ][dash pattern={on 1.69pt off 2.76pt}][line width=1.5]  (215,86.07) .. controls (215,83.35) and (217.2,81.15) .. (219.93,81.15) .. controls (222.65,81.15) and (224.85,83.35) .. (224.85,86.07) .. controls (224.85,88.8) and (222.65,91) .. (219.93,91) .. controls (217.2,91) and (215,88.8) .. (215,86.07) -- cycle ;
\draw  [draw opacity=0][fill={rgb, 255:red, 214; green, 229; blue, 247 }  ,fill opacity=1 ][line width=1.5]  (231,86.07) .. controls (231,83.35) and (233.2,81.15) .. (235.93,81.15) .. controls (238.65,81.15) and (240.85,83.35) .. (240.85,86.07) .. controls (240.85,88.8) and (238.65,91) .. (235.93,91) .. controls (233.2,91) and (231,88.8) .. (231,86.07) -- cycle ;
\draw  [draw opacity=0][fill={rgb, 255:red, 214; green, 229; blue, 247 }  ,fill opacity=1 ][line width=1.5]  (383,86.07) .. controls (383,83.35) and (385.2,81.15) .. (387.93,81.15) .. controls (390.65,81.15) and (392.85,83.35) .. (392.85,86.07) .. controls (392.85,88.8) and (390.65,91) .. (387.93,91) .. controls (385.2,91) and (383,88.8) .. (383,86.07) -- cycle ;
\draw  [draw opacity=0][fill={rgb, 255:red, 214; green, 229; blue, 247 }  ,fill opacity=1 ][line width=1.5]  (398,86.07) .. controls (398,83.35) and (400.2,81.15) .. (402.93,81.15) .. controls (405.65,81.15) and (407.85,83.35) .. (407.85,86.07) .. controls (407.85,88.8) and (405.65,91) .. (402.93,91) .. controls (400.2,91) and (398,88.8) .. (398,86.07) -- cycle ;
\draw [color={rgb, 255:red, 74; green, 144; blue, 226 }  ,draw opacity=1 ] [dash pattern={on 4.5pt off 4.5pt}]  (154,78.77) -- (188.93,80.08) ;
\draw [shift={(190.93,80.15)}, rotate = 182.15] [fill={rgb, 255:red, 74; green, 144; blue, 226 }  ,fill opacity=1 ][line width=0.08]  [draw opacity=0] (12,-3) -- (0,0) -- (12,3) -- cycle    ;
\draw  [color={rgb, 255:red, 74; green, 144; blue, 226 }  ,draw opacity=1 ][fill={rgb, 255:red, 157; green, 197; blue, 244 }  ,fill opacity=1 ][line width=1.5]  (138,87.07) .. controls (138,84.35) and (140.2,82.15) .. (142.93,82.15) .. controls (145.65,82.15) and (147.85,84.35) .. (147.85,87.07) .. controls (147.85,89.8) and (145.65,92) .. (142.93,92) .. controls (140.2,92) and (138,89.8) .. (138,87.07) -- cycle ;
\draw  [color={rgb, 255:red, 74; green, 144; blue, 226 }  ,draw opacity=1 ][fill={rgb, 255:red, 157; green, 197; blue, 244 }  ,fill opacity=1 ][line width=1.5]  (299,86.46) .. controls (299,83.74) and (301.2,81.53) .. (303.93,81.53) .. controls (306.65,81.53) and (308.85,83.74) .. (308.85,86.46) .. controls (308.85,89.18) and (306.65,91.38) .. (303.93,91.38) .. controls (301.2,91.38) and (299,89.18) .. (299,86.46) -- cycle ;
\draw  [color={rgb, 255:red, 74; green, 144; blue, 226 }  ,draw opacity=1 ][fill={rgb, 255:red, 157; green, 197; blue, 244 }  ,fill opacity=1 ][line width=1.5]  (350,85.46) .. controls (350,82.74) and (352.2,80.53) .. (354.93,80.53) .. controls (357.65,80.53) and (359.85,82.74) .. (359.85,85.46) .. controls (359.85,88.18) and (357.65,90.38) .. (354.93,90.38) .. controls (352.2,90.38) and (350,88.18) .. (350,85.46) -- cycle ;
\draw  [color={rgb, 255:red, 74; green, 144; blue, 226 }  ,draw opacity=1 ][fill={rgb, 255:red, 157; green, 197; blue, 244 }  ,fill opacity=1 ][line width=1.5]  (418,85.46) .. controls (418,82.74) and (420.2,80.53) .. (422.93,80.53) .. controls (425.65,80.53) and (427.85,82.74) .. (427.85,85.46) .. controls (427.85,88.18) and (425.65,90.38) .. (422.93,90.38) .. controls (420.2,90.38) and (418,88.18) .. (418,85.46) -- cycle ;
\draw  [draw opacity=0][fill={rgb, 255:red, 206; green, 224; blue, 245 }  ,fill opacity=1 ][dash pattern={on 1.69pt off 2.76pt}][line width=1.5]  (316,86.07) .. controls (316,83.35) and (318.2,81.15) .. (320.93,81.15) .. controls (323.65,81.15) and (325.85,83.35) .. (325.85,86.07) .. controls (325.85,88.8) and (323.65,91) .. (320.93,91) .. controls (318.2,91) and (316,88.8) .. (316,86.07) -- cycle ;
\draw  [draw opacity=0][fill={rgb, 255:red, 206; green, 224; blue, 245 }  ,fill opacity=1 ][dash pattern={on 1.69pt off 2.76pt}][line width=1.5]  (331,86.07) .. controls (331,83.35) and (333.2,81.15) .. (335.93,81.15) .. controls (338.65,81.15) and (340.85,83.35) .. (340.85,86.07) .. controls (340.85,88.8) and (338.65,91) .. (335.93,91) .. controls (333.2,91) and (331,88.8) .. (331,86.07) -- cycle ;
\draw [color={rgb, 255:red, 74; green, 144; blue, 226 }  ,draw opacity=1 ] [dash pattern={on 4.5pt off 4.5pt}]  (363.33,75.77) -- (390.33,75.98) ;
\draw [shift={(392.33,76)}, rotate = 180.46] [fill={rgb, 255:red, 74; green, 144; blue, 226 }  ,fill opacity=1 ][line width=0.08]  [draw opacity=0] (12,-3) -- (0,0) -- (12,3) -- cycle    ;
\draw [color={rgb, 255:red, 74; green, 144; blue, 226 }  ,draw opacity=1 ] [dash pattern={on 4.5pt off 4.5pt}]  (296.33,78.77) -- (262.93,78.77) ;
\draw [shift={(260.93,78.77)}, rotate = 360] [fill={rgb, 255:red, 74; green, 144; blue, 226 }  ,fill opacity=1 ][line width=0.08]  [draw opacity=0] (12,-3) -- (0,0) -- (12,3) -- cycle    ;
\draw  [draw opacity=0][fill={rgb, 255:red, 214; green, 229; blue, 247 }  ,fill opacity=1 ][line width=1.5]  (432,86.07) .. controls (432,83.35) and (434.2,81.15) .. (436.93,81.15) .. controls (439.65,81.15) and (441.85,83.35) .. (441.85,86.07) .. controls (441.85,88.8) and (439.65,91) .. (436.93,91) .. controls (434.2,91) and (432,88.8) .. (432,86.07) -- cycle ;
\draw [color={rgb, 255:red, 74; green, 144; blue, 226 }  ,draw opacity=1 ] [dash pattern={on 4.5pt off 4.5pt}]  (433.93,76.15) -- (401.52,75.79) ;
\draw [shift={(399.52,75.77)}, rotate = 0.64] [fill={rgb, 255:red, 74; green, 144; blue, 226 }  ,fill opacity=1 ][line width=0.08]  [draw opacity=0] (12,-3) -- (0,0) -- (12,3) -- cycle    ;
\draw  [draw opacity=0][fill={rgb, 255:red, 206; green, 224; blue, 245 }  ,fill opacity=1 ][dash pattern={on 1.69pt off 2.76pt}][line width=1.5]  (91,119.07) .. controls (91,116.35) and (93.2,114.15) .. (95.93,114.15) .. controls (98.65,114.15) and (100.85,116.35) .. (100.85,119.07) .. controls (100.85,121.8) and (98.65,124) .. (95.93,124) .. controls (93.2,124) and (91,121.8) .. (91,119.07) -- cycle ;
\draw    (98,119) -- (504.33,119) ;
\draw  [color={rgb, 255:red, 48; green, 65; blue, 69 }  ,draw opacity=1 ][fill={rgb, 255:red, 208; green, 2; blue, 27 }  ,fill opacity=1 ] (93,119) .. controls (93,117.62) and (94.12,116.5) .. (95.5,116.5) .. controls (96.88,116.5) and (98,117.62) .. (98,119) .. controls (98,120.38) and (96.88,121.5) .. (95.5,121.5) .. controls (94.12,121.5) and (93,120.38) .. (93,119) -- cycle ;
\draw  [dash pattern={on 0.84pt off 2.51pt}]  (504.33,119) -- (528.33,119) ;
\draw  [color={rgb, 255:red, 74; green, 144; blue, 226 }  ,draw opacity=1 ][fill={rgb, 255:red, 157; green, 197; blue, 244 }  ,fill opacity=1 ][line width=1.5]  (213,119.07) .. controls (213,116.35) and (215.2,114.15) .. (217.93,114.15) .. controls (220.65,114.15) and (222.85,116.35) .. (222.85,119.07) .. controls (222.85,121.8) and (220.65,124) .. (217.93,124) .. controls (215.2,124) and (213,121.8) .. (213,119.07) -- cycle ;
\draw  [color={rgb, 255:red, 74; green, 144; blue, 226 }  ,draw opacity=1 ][fill={rgb, 255:red, 157; green, 197; blue, 244 }  ,fill opacity=1 ][line width=1.5]  (229,119.07) .. controls (229,116.35) and (231.2,114.15) .. (233.93,114.15) .. controls (236.65,114.15) and (238.85,116.35) .. (238.85,119.07) .. controls (238.85,121.8) and (236.65,124) .. (233.93,124) .. controls (231.2,124) and (229,121.8) .. (229,119.07) -- cycle ;
\draw  [color={rgb, 255:red, 74; green, 144; blue, 226 }  ,draw opacity=1 ][fill={rgb, 255:red, 157; green, 197; blue, 244 }  ,fill opacity=1 ][line width=1.5]  (382,119.07) .. controls (382,116.35) and (384.2,114.15) .. (386.93,114.15) .. controls (389.65,114.15) and (391.85,116.35) .. (391.85,119.07) .. controls (391.85,121.8) and (389.65,124) .. (386.93,124) .. controls (384.2,124) and (382,121.8) .. (382,119.07) -- cycle ;
\draw  [color={rgb, 255:red, 74; green, 144; blue, 226 }  ,draw opacity=1 ][fill={rgb, 255:red, 157; green, 197; blue, 244 }  ,fill opacity=1 ][line width=1.5]  (397,119.07) .. controls (397,116.35) and (399.2,114.15) .. (401.93,114.15) .. controls (404.65,114.15) and (406.85,116.35) .. (406.85,119.07) .. controls (406.85,121.8) and (404.65,124) .. (401.93,124) .. controls (399.2,124) and (397,121.8) .. (397,119.07) -- cycle ;
\draw  [draw opacity=0][fill={rgb, 255:red, 206; green, 224; blue, 245 }  ,fill opacity=1 ][dash pattern={on 1.69pt off 2.76pt}][line width=1.5]  (313,119.07) .. controls (313,116.35) and (315.2,114.15) .. (317.93,114.15) .. controls (320.65,114.15) and (322.85,116.35) .. (322.85,119.07) .. controls (322.85,121.8) and (320.65,124) .. (317.93,124) .. controls (315.2,124) and (313,121.8) .. (313,119.07) -- cycle ;
\draw  [draw opacity=0][fill={rgb, 255:red, 206; green, 224; blue, 245 }  ,fill opacity=1 ][dash pattern={on 1.69pt off 2.76pt}][line width=1.5]  (328,119.07) .. controls (328,116.35) and (330.2,114.15) .. (332.93,114.15) .. controls (335.65,114.15) and (337.85,116.35) .. (337.85,119.07) .. controls (337.85,121.8) and (335.65,124) .. (332.93,124) .. controls (330.2,124) and (328,121.8) .. (328,119.07) -- cycle ;
\draw  [draw opacity=0][fill={rgb, 255:red, 214; green, 229; blue, 247 }  ,fill opacity=1 ][line width=1.5]  (430,119.07) .. controls (430,116.35) and (432.2,114.15) .. (434.93,114.15) .. controls (437.65,114.15) and (439.85,116.35) .. (439.85,119.07) .. controls (439.85,121.8) and (437.65,124) .. (434.93,124) .. controls (432.2,124) and (430,121.8) .. (430,119.07) -- cycle ;
\draw  [color={rgb, 255:red, 74; green, 144; blue, 226 }  ,draw opacity=1 ][fill={rgb, 255:red, 157; green, 197; blue, 244 }  ,fill opacity=1 ][line width=1.5]  (91,50.07) .. controls (91,47.35) and (93.2,45.15) .. (95.93,45.15) .. controls (98.65,45.15) and (100.85,47.35) .. (100.85,50.07) .. controls (100.85,52.8) and (98.65,55) .. (95.93,55) .. controls (93.2,55) and (91,52.8) .. (91,50.07) -- cycle ;
\draw    (99,50) -- (505.33,50) ;
\draw  [color={rgb, 255:red, 48; green, 65; blue, 69 }  ,draw opacity=1 ][fill={rgb, 255:red, 208; green, 2; blue, 27 }  ,fill opacity=1 ] (94,50) .. controls (94,48.62) and (95.12,47.5) .. (96.5,47.5) .. controls (97.88,47.5) and (99,48.62) .. (99,50) .. controls (99,51.38) and (97.88,52.5) .. (96.5,52.5) .. controls (95.12,52.5) and (94,51.38) .. (94,50) -- cycle ;
\draw  [dash pattern={on 0.84pt off 2.51pt}]  (505.33,50) -- (529.33,50) ;
\draw  [color={rgb, 255:red, 74; green, 144; blue, 226 }  ,draw opacity=1 ][fill={rgb, 255:red, 157; green, 197; blue, 244 }  ,fill opacity=1 ][line width=1.5]  (316,50.07) .. controls (316,47.35) and (318.2,45.15) .. (320.93,45.15) .. controls (323.65,45.15) and (325.85,47.35) .. (325.85,50.07) .. controls (325.85,52.8) and (323.65,55) .. (320.93,55) .. controls (318.2,55) and (316,52.8) .. (316,50.07) -- cycle ;
\draw  [color={rgb, 255:red, 74; green, 144; blue, 226 }  ,draw opacity=1 ][fill={rgb, 255:red, 157; green, 197; blue, 244 }  ,fill opacity=1 ][line width=1.5]  (332,50.07) .. controls (332,47.35) and (334.2,45.15) .. (336.93,45.15) .. controls (339.65,45.15) and (341.85,47.35) .. (341.85,50.07) .. controls (341.85,52.8) and (339.65,55) .. (336.93,55) .. controls (334.2,55) and (332,52.8) .. (332,50.07) -- cycle ;
\draw  [color={rgb, 255:red, 74; green, 144; blue, 226 }  ,draw opacity=1 ][fill={rgb, 255:red, 157; green, 197; blue, 244 }  ,fill opacity=1 ][line width=1.5]  (432,50.07) .. controls (432,47.35) and (434.2,45.15) .. (436.93,45.15) .. controls (439.65,45.15) and (441.85,47.35) .. (441.85,50.07) .. controls (441.85,52.8) and (439.65,55) .. (436.93,55) .. controls (434.2,55) and (432,52.8) .. (432,50.07) -- cycle ;
\draw  [draw opacity=0][fill={rgb, 255:red, 206; green, 224; blue, 245 }  ,fill opacity=1 ][dash pattern={on 1.69pt off 2.76pt}][line width=1.5]  (214,50.07) .. controls (214,47.35) and (216.2,45.15) .. (218.93,45.15) .. controls (221.65,45.15) and (223.85,47.35) .. (223.85,50.07) .. controls (223.85,52.8) and (221.65,55) .. (218.93,55) .. controls (216.2,55) and (214,52.8) .. (214,50.07) -- cycle ;
\draw  [draw opacity=0][fill={rgb, 255:red, 214; green, 229; blue, 247 }  ,fill opacity=1 ][line width=1.5]  (230,50.07) .. controls (230,47.35) and (232.2,45.15) .. (234.93,45.15) .. controls (237.65,45.15) and (239.85,47.35) .. (239.85,50.07) .. controls (239.85,52.8) and (237.65,55) .. (234.93,55) .. controls (232.2,55) and (230,52.8) .. (230,50.07) -- cycle ;
\draw  [draw opacity=0][fill={rgb, 255:red, 214; green, 229; blue, 247 }  ,fill opacity=1 ][line width=1.5]  (382,50.07) .. controls (382,47.35) and (384.2,45.15) .. (386.93,45.15) .. controls (389.65,45.15) and (391.85,47.35) .. (391.85,50.07) .. controls (391.85,52.8) and (389.65,55) .. (386.93,55) .. controls (384.2,55) and (382,52.8) .. (382,50.07) -- cycle ;
\draw  [draw opacity=0][fill={rgb, 255:red, 214; green, 229; blue, 247 }  ,fill opacity=1 ][line width=1.5]  (397,50.07) .. controls (397,47.35) and (399.2,45.15) .. (401.93,45.15) .. controls (404.65,45.15) and (406.85,47.35) .. (406.85,50.07) .. controls (406.85,52.8) and (404.65,55) .. (401.93,55) .. controls (399.2,55) and (397,52.8) .. (397,50.07) -- cycle ;
\draw  [draw opacity=0][fill={rgb, 255:red, 206; green, 224; blue, 245 }  ,fill opacity=1 ][dash pattern={on 1.69pt off 2.76pt}][line width=1.5]  (99,258.07) .. controls (99,255.35) and (101.2,253.15) .. (103.93,253.15) .. controls (106.65,253.15) and (108.85,255.35) .. (108.85,258.07) .. controls (108.85,260.8) and (106.65,263) .. (103.93,263) .. controls (101.2,263) and (99,260.8) .. (99,258.07) -- cycle ;
\draw    (106,258) -- (512.33,258) ;
\draw  [color={rgb, 255:red, 48; green, 65; blue, 69 }  ,draw opacity=1 ][fill={rgb, 255:red, 208; green, 2; blue, 27 }  ,fill opacity=1 ] (101,258) .. controls (101,256.62) and (102.12,255.5) .. (103.5,255.5) .. controls (104.88,255.5) and (106,256.62) .. (106,258) .. controls (106,259.38) and (104.88,260.5) .. (103.5,260.5) .. controls (102.12,260.5) and (101,259.38) .. (101,258) -- cycle ;
\draw  [dash pattern={on 0.84pt off 2.51pt}]  (512.33,258) -- (536.33,258) ;
\draw  [draw opacity=0][fill={rgb, 255:red, 206; green, 224; blue, 245 }  ,fill opacity=1 ][dash pattern={on 1.69pt off 2.76pt}][line width=1.5]  (249,258.07) .. controls (249,255.35) and (251.2,253.15) .. (253.93,253.15) .. controls (256.65,253.15) and (258.85,255.35) .. (258.85,258.07) .. controls (258.85,260.8) and (256.65,263) .. (253.93,263) .. controls (251.2,263) and (249,260.8) .. (249,258.07) -- cycle ;
\draw  [draw opacity=0][fill={rgb, 255:red, 214; green, 229; blue, 247 }  ,fill opacity=1 ][line width=1.5]  (265,258.07) .. controls (265,255.35) and (267.2,253.15) .. (269.93,253.15) .. controls (272.65,253.15) and (274.85,255.35) .. (274.85,258.07) .. controls (274.85,260.8) and (272.65,263) .. (269.93,263) .. controls (267.2,263) and (265,260.8) .. (265,258.07) -- cycle ;
\draw  [draw opacity=0][fill={rgb, 255:red, 214; green, 229; blue, 247 }  ,fill opacity=1 ][line width=1.5]  (424,258.07) .. controls (424,255.35) and (426.2,253.15) .. (428.93,253.15) .. controls (431.65,253.15) and (433.85,255.35) .. (433.85,258.07) .. controls (433.85,260.8) and (431.65,263) .. (428.93,263) .. controls (426.2,263) and (424,260.8) .. (424,258.07) -- cycle ;
\draw [color={rgb, 255:red, 74; green, 144; blue, 226 }  ,draw opacity=1 ] [dash pattern={on 4.5pt off 4.5pt}]  (169,250.77) -- (203.93,252.08) ;
\draw [shift={(205.93,252.15)}, rotate = 182.15] [fill={rgb, 255:red, 74; green, 144; blue, 226 }  ,fill opacity=1 ][line width=0.08]  [draw opacity=0] (12,-3) -- (0,0) -- (12,3) -- cycle    ;
\draw  [color={rgb, 255:red, 74; green, 144; blue, 226 }  ,draw opacity=1 ][fill={rgb, 255:red, 157; green, 197; blue, 244 }  ,fill opacity=1 ][line width=1.5]  (153,259.07) .. controls (153,256.35) and (155.2,254.15) .. (157.93,254.15) .. controls (160.65,254.15) and (162.85,256.35) .. (162.85,259.07) .. controls (162.85,261.8) and (160.65,264) .. (157.93,264) .. controls (155.2,264) and (153,261.8) .. (153,259.07) -- cycle ;
\draw  [color={rgb, 255:red, 74; green, 144; blue, 226 }  ,draw opacity=1 ][fill={rgb, 255:red, 157; green, 197; blue, 244 }  ,fill opacity=1 ][line width=1.5]  (325,258.46) .. controls (325,255.74) and (327.2,253.53) .. (329.93,253.53) .. controls (332.65,253.53) and (334.85,255.74) .. (334.85,258.46) .. controls (334.85,261.18) and (332.65,263.38) .. (329.93,263.38) .. controls (327.2,263.38) and (325,261.18) .. (325,258.46) -- cycle ;
\draw  [color={rgb, 255:red, 74; green, 144; blue, 226 }  ,draw opacity=1 ][fill={rgb, 255:red, 157; green, 197; blue, 244 }  ,fill opacity=1 ][line width=1.5]  (383,257.46) .. controls (383,254.74) and (385.2,252.53) .. (387.93,252.53) .. controls (390.65,252.53) and (392.85,254.74) .. (392.85,257.46) .. controls (392.85,260.18) and (390.65,262.38) .. (387.93,262.38) .. controls (385.2,262.38) and (383,260.18) .. (383,257.46) -- cycle ;
\draw  [draw opacity=0][fill={rgb, 255:red, 206; green, 224; blue, 245 }  ,fill opacity=1 ][dash pattern={on 1.69pt off 2.76pt}][line width=1.5]  (348,258.07) .. controls (348,255.35) and (350.2,253.15) .. (352.93,253.15) .. controls (355.65,253.15) and (357.85,255.35) .. (357.85,258.07) .. controls (357.85,260.8) and (355.65,263) .. (352.93,263) .. controls (350.2,263) and (348,260.8) .. (348,258.07) -- cycle ;
\draw  [draw opacity=0][fill={rgb, 255:red, 206; green, 224; blue, 245 }  ,fill opacity=1 ][dash pattern={on 1.69pt off 2.76pt}][line width=1.5]  (363,258.07) .. controls (363,255.35) and (365.2,253.15) .. (367.93,253.15) .. controls (370.65,253.15) and (372.85,255.35) .. (372.85,258.07) .. controls (372.85,260.8) and (370.65,263) .. (367.93,263) .. controls (365.2,263) and (363,260.8) .. (363,258.07) -- cycle ;
\draw [color={rgb, 255:red, 74; green, 144; blue, 226 }  ,draw opacity=1 ] [dash pattern={on 4.5pt off 4.5pt}]  (396.33,249.77) -- (423.33,249.98) ;
\draw [shift={(425.33,250)}, rotate = 180.46] [fill={rgb, 255:red, 74; green, 144; blue, 226 }  ,fill opacity=1 ][line width=0.08]  [draw opacity=0] (12,-3) -- (0,0) -- (12,3) -- cycle    ;
\draw [color={rgb, 255:red, 74; green, 144; blue, 226 }  ,draw opacity=1 ] [dash pattern={on 4.5pt off 4.5pt}]  (319.33,250.77) -- (285.93,250.77) ;
\draw [shift={(283.93,250.77)}, rotate = 360] [fill={rgb, 255:red, 74; green, 144; blue, 226 }  ,fill opacity=1 ][line width=0.08]  [draw opacity=0] (12,-3) -- (0,0) -- (12,3) -- cycle    ;
\draw  [draw opacity=0][fill={rgb, 255:red, 206; green, 224; blue, 245 }  ,fill opacity=1 ][dash pattern={on 1.69pt off 2.76pt}][line width=1.5]  (98,291.07) .. controls (98,288.35) and (100.2,286.15) .. (102.93,286.15) .. controls (105.65,286.15) and (107.85,288.35) .. (107.85,291.07) .. controls (107.85,293.8) and (105.65,296) .. (102.93,296) .. controls (100.2,296) and (98,293.8) .. (98,291.07) -- cycle ;
\draw    (105,291) -- (511.33,291) ;
\draw  [color={rgb, 255:red, 48; green, 65; blue, 69 }  ,draw opacity=1 ][fill={rgb, 255:red, 208; green, 2; blue, 27 }  ,fill opacity=1 ] (100,291) .. controls (100,289.62) and (101.12,288.5) .. (102.5,288.5) .. controls (103.88,288.5) and (105,289.62) .. (105,291) .. controls (105,292.38) and (103.88,293.5) .. (102.5,293.5) .. controls (101.12,293.5) and (100,292.38) .. (100,291) -- cycle ;
\draw  [dash pattern={on 0.84pt off 2.51pt}]  (511.33,291) -- (535.33,291) ;
\draw  [color={rgb, 255:red, 74; green, 144; blue, 226 }  ,draw opacity=1 ][fill={rgb, 255:red, 157; green, 197; blue, 244 }  ,fill opacity=1 ][line width=1.5]  (249,291.07) .. controls (249,288.35) and (251.2,286.15) .. (253.93,286.15) .. controls (256.65,286.15) and (258.85,288.35) .. (258.85,291.07) .. controls (258.85,293.8) and (256.65,296) .. (253.93,296) .. controls (251.2,296) and (249,293.8) .. (249,291.07) -- cycle ;
\draw  [color={rgb, 255:red, 74; green, 144; blue, 226 }  ,draw opacity=1 ][fill={rgb, 255:red, 157; green, 197; blue, 244 }  ,fill opacity=1 ][line width=1.5]  (264,291.07) .. controls (264,288.35) and (266.2,286.15) .. (268.93,286.15) .. controls (271.65,286.15) and (273.85,288.35) .. (273.85,291.07) .. controls (273.85,293.8) and (271.65,296) .. (268.93,296) .. controls (266.2,296) and (264,293.8) .. (264,291.07) -- cycle ;
\draw  [color={rgb, 255:red, 74; green, 144; blue, 226 }  ,draw opacity=1 ][fill={rgb, 255:red, 157; green, 197; blue, 244 }  ,fill opacity=1 ][line width=1.5]  (98,222.07) .. controls (98,219.35) and (100.2,217.15) .. (102.93,217.15) .. controls (105.65,217.15) and (107.85,219.35) .. (107.85,222.07) .. controls (107.85,224.8) and (105.65,227) .. (102.93,227) .. controls (100.2,227) and (98,224.8) .. (98,222.07) -- cycle ;
\draw    (106,222) -- (512.33,222) ;
\draw  [color={rgb, 255:red, 48; green, 65; blue, 69 }  ,draw opacity=1 ][fill={rgb, 255:red, 208; green, 2; blue, 27 }  ,fill opacity=1 ] (101,222) .. controls (101,220.62) and (102.12,219.5) .. (103.5,219.5) .. controls (104.88,219.5) and (106,220.62) .. (106,222) .. controls (106,223.38) and (104.88,224.5) .. (103.5,224.5) .. controls (102.12,224.5) and (101,223.38) .. (101,222) -- cycle ;
\draw  [dash pattern={on 0.84pt off 2.51pt}]  (512.33,222) -- (536.33,222) ;
\draw  [color={rgb, 255:red, 74; green, 144; blue, 226 }  ,draw opacity=1 ][fill={rgb, 255:red, 157; green, 197; blue, 244 }  ,fill opacity=1 ][line width=1.5]  (347,222.07) .. controls (347,219.35) and (349.2,217.15) .. (351.93,217.15) .. controls (354.65,217.15) and (356.85,219.35) .. (356.85,222.07) .. controls (356.85,224.8) and (354.65,227) .. (351.93,227) .. controls (349.2,227) and (347,224.8) .. (347,222.07) -- cycle ;
\draw  [color={rgb, 255:red, 74; green, 144; blue, 226 }  ,draw opacity=1 ][fill={rgb, 255:red, 157; green, 197; blue, 244 }  ,fill opacity=1 ][line width=1.5]  (363,222.07) .. controls (363,219.35) and (365.2,217.15) .. (367.93,217.15) .. controls (370.65,217.15) and (372.85,219.35) .. (372.85,222.07) .. controls (372.85,224.8) and (370.65,227) .. (367.93,227) .. controls (365.2,227) and (363,224.8) .. (363,222.07) -- cycle ;
\draw  [draw opacity=0][fill={rgb, 255:red, 206; green, 224; blue, 245 }  ,fill opacity=1 ][dash pattern={on 1.69pt off 2.76pt}][line width=1.5]  (249,222.07) .. controls (249,219.35) and (251.2,217.15) .. (253.93,217.15) .. controls (256.65,217.15) and (258.85,219.35) .. (258.85,222.07) .. controls (258.85,224.8) and (256.65,227) .. (253.93,227) .. controls (251.2,227) and (249,224.8) .. (249,222.07) -- cycle ;
\draw  [draw opacity=0][fill={rgb, 255:red, 214; green, 229; blue, 247 }  ,fill opacity=1 ][line width=1.5]  (265,222.07) .. controls (265,219.35) and (267.2,217.15) .. (269.93,217.15) .. controls (272.65,217.15) and (274.85,219.35) .. (274.85,222.07) .. controls (274.85,224.8) and (272.65,227) .. (269.93,227) .. controls (267.2,227) and (265,224.8) .. (265,222.07) -- cycle ;
\draw  [draw opacity=0][fill={rgb, 255:red, 214; green, 229; blue, 247 }  ,fill opacity=1 ][line width=1.5]  (422,222.07) .. controls (422,219.35) and (424.2,217.15) .. (426.93,217.15) .. controls (429.65,217.15) and (431.85,219.35) .. (431.85,222.07) .. controls (431.85,224.8) and (429.65,227) .. (426.93,227) .. controls (424.2,227) and (422,224.8) .. (422,222.07) -- cycle ;
\draw  [draw opacity=0][fill={rgb, 255:red, 206; green, 224; blue, 245 }  ,fill opacity=1 ][dash pattern={on 1.69pt off 2.76pt}][line width=1.5]  (348,291.07) .. controls (348,288.35) and (350.2,286.15) .. (352.93,286.15) .. controls (355.65,286.15) and (357.85,288.35) .. (357.85,291.07) .. controls (357.85,293.8) and (355.65,296) .. (352.93,296) .. controls (350.2,296) and (348,293.8) .. (348,291.07) -- cycle ;
\draw  [draw opacity=0][fill={rgb, 255:red, 206; green, 224; blue, 245 }  ,fill opacity=1 ][dash pattern={on 1.69pt off 2.76pt}][line width=1.5]  (363,291.07) .. controls (363,288.35) and (365.2,286.15) .. (367.93,286.15) .. controls (370.65,286.15) and (372.85,288.35) .. (372.85,291.07) .. controls (372.85,293.8) and (370.65,296) .. (367.93,296) .. controls (365.2,296) and (363,293.8) .. (363,291.07) -- cycle ;
\draw  [color={rgb, 255:red, 74; green, 144; blue, 226 }  ,draw opacity=1 ][fill={rgb, 255:red, 157; green, 197; blue, 244 }  ,fill opacity=1 ][line width=1.5]  (424,290.46) .. controls (424,287.74) and (426.2,285.53) .. (428.93,285.53) .. controls (431.65,285.53) and (433.85,287.74) .. (433.85,290.46) .. controls (433.85,293.18) and (431.65,295.38) .. (428.93,295.38) .. controls (426.2,295.38) and (424,293.18) .. (424,290.46) -- cycle ;

\draw (457,63.4) node [anchor=north west][inner sep=0.75pt]    {$t\in ( 0,T)$};
\draw (456,96.4) node [anchor=north west][inner sep=0.75pt]    {$t=T$};
\draw (460,28.4) node [anchor=north west][inner sep=0.75pt]    {$t=0$};
\draw (83,28) node [anchor=north west][inner sep=0.75pt]   [align=left] {$\displaystyle q_{1}$};
\draw (130,65) node [anchor=north west][inner sep=0.75pt]   [align=left] {$\displaystyle q_{1}$};
\draw (202,127) node [anchor=north west][inner sep=0.75pt]   [align=left] {$\displaystyle q_{1}$};
\draw (306,28) node [anchor=north west][inner sep=0.75pt]   [align=left] {$\displaystyle q_{2}$};
\draw (335,28) node [anchor=north west][inner sep=0.75pt]   [align=left] {$\displaystyle q_3$};
\draw (290,65) node [anchor=north west][inner sep=0.75pt]   [align=left] {$\displaystyle q_{2}$};
\draw (352,65) node [anchor=north west][inner sep=0.75pt]   [align=left] {$\displaystyle q_{3}$};
\draw (375,127) node [anchor=north west][inner sep=0.75pt]   [align=left] {$\displaystyle q_{3}$};
\draw (403,127) node [anchor=north west][inner sep=0.75pt]   [align=left] {$\displaystyle q_{4}$};
\draw (235.93,127) node [anchor=north west][inner sep=0.75pt]   [align=left] {$\displaystyle q_{2}$};
\draw (425,28) node [anchor=north west][inner sep=0.75pt]   [align=left] {$\displaystyle q_{4}$};
\draw (420,65) node [anchor=north west][inner sep=0.75pt]   [align=left] {$\displaystyle q_{4}$};
\draw (464,235.4) node [anchor=north west][inner sep=0.75pt]    {$t\in ( 0,T)$};
\draw (463,268.4) node [anchor=north west][inner sep=0.75pt]    {$t=T$};
\draw (467,200.4) node [anchor=north west][inner sep=0.75pt]    {$t=0$};
\draw (83,200) node [anchor=north west][inner sep=0.75pt]   [align=left] {$\displaystyle q_{1}$};
\draw (144,237) node [anchor=north west][inner sep=0.75pt]   [align=left] {$\displaystyle q_{1}$};
\draw (240,300) node [anchor=north west][inner sep=0.75pt]   [align=left] {$\displaystyle q_{1}$};
\draw (339,200) node [anchor=north west][inner sep=0.75pt]   [align=left] {$\displaystyle q_{2}$};
\draw (365,200) node [anchor=north west][inner sep=0.75pt]   [align=left] {$\displaystyle q_3$};
\draw (326,237) node [anchor=north west][inner sep=0.75pt]   [align=left] {$\displaystyle q_{2}$};
\draw (381,237) node [anchor=north west][inner sep=0.75pt]   [align=left] {$\displaystyle q_3$};
\draw (270,300) node [anchor=north west][inner sep=0.75pt]   [align=left] {$\displaystyle q_{2}$};
\draw (430.93,300) node [anchor=north west][inner sep=0.75pt]   [align=left] {$\displaystyle q_3$};
\end{tikzpicture}
	}
	\caption{Two examples of frozen orbits provided by Theorem \ref{thm:solution_helium} for $n=4$ (above) and $n=3$ (below). In particular, this illustrates the qualitative behaviour for a solution of \eqref{eq:helium_mu} when $\mu$ is small enough as in Theorem \ref{thm:mu_to_0}.}\label{fig:orbits}
\end{figure}
As a result, after a time reflection, we obtain a generalized periodic solution of period $2T$ (cf. \cite[Theorem 1.1]{helium_frozen}). In last part of the paper we make the connection with Schubart's orbits more apparent. We introduce a parameter $\mu \in (0,1]$ to dampen the repulsive interaction and investigate the asymptotic behaviour as $\mu\to 0$ of the system 
\begin{equation}
	\label{eq:helium_mu}
	\ddot{q}_i = f_i'(\vert q_i \vert ) - \mu \sum_{j=1}^{i-1} g'_{ij}(\vert q_i-q_j\vert )+\mu \sum_{j = i+1}^n g'_{ij}(\vert q_j-q_i \vert).
\end{equation} 
Theorem \ref{thm:solution_helium} implies that, for any $\mu\in (0,1]$, there exists at least one $\mu-$frozen planet orbit which solves \eqref{eq:helium_mu}. It is worthwhile noticing that, due to a lack of information about uniqueness,  the family of $\mu-$frozen planet orbits needs not be continuous in $\mu$. The limit profile as $\mu \to 0$ is obtained from the unique continuous \emph{brake solution} which solves the non-autonomous boundary value problem
\begin{equation*}
	\begin{cases}
       \ddot{\eta} = F(\eta,t) \\
       \eta(0) =0\\ 
	   \dot \eta(nT) = 0
	\end{cases}, \text{ where } \,F(\eta,t) = \begin{cases}
	f_1(\eta), \text{ if }t \in [0,T] \\
	f_i(\eta), \text{ if }t \in ((i-1)T,iT], i>1
\end{cases}
\end{equation*}
We will call any such curve $\eta$ an $nT-$\emph{brake}. Note that $\eta$ is $C^1$ except at $t=0$; moreover, under our assumptions, we will show that it is unique and monotone increasing. In a similar fashion, we will call a \emph{folded} $nT-$\emph{brake} the curve $x:[0,T]\to \mathbb{R}^n $ defined as:
\begin{equation}
	\label{eq:nT_brake}
	x(t) = (x_1(t),\dots, x_n(t)), \quad x_{i}(t) = \begin{cases}
		\eta(t+(i-1) T) &\text{ if } i \text{ is odd}\\
			\eta(i T-t) &\text{ if } i \text{ is even}.
	\end{cases}
\end{equation}
Note that $x_{i+1}\ge x_{i}$, since $\eta$ is increasing. Moreover, $x$ has exactly $n-1$ double collisions, $\lfloor (n-1)/2 \rfloor$ of which appear at $t=0$, while $\lfloor n/2 \rfloor$ at $t = T$. We have the following result.
\begin{thm}
	\label{thm:mu_to_0}
		For any $T>0$ and for any choice of $f_i,g_{ij}$ satisfying assumptions \eqref{assumption:values_f_g}-\eqref{assumption:convexity}, there exists a family of $\mu-$frozen planet orbits solving \eqref{eq:helium_mu}, converging in $C^2_{loc}(0,T)$ and in $H^1([0,T],\mathbb{R}^n)$ to the corresponding folded $nT-$brake.
\end{thm}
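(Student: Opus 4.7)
The plan is to extract a subsequential limit of the $\mu$-frozen planet orbits $q^\mu=(q^\mu_1,\dots,q^\mu_n)$ produced by Theorem \ref{thm:solution_helium} as $\mu\to 0^+$, to verify that the limit solves the decoupled Kepler-type equations away from collisions, and to identify it with the folded $nT$-brake through the uniqueness of $\eta$. Let $\mathcal{A}_\mu$ denote the Lagrangian action of \eqref{eq:helium_mu}. The first step is to bound $\mathcal{A}_\mu(q^\mu)$ uniformly from above in $\mu$. To this end, I would exhibit a test configuration $\bar q^{\varepsilon}$ obtained by smoothing the folded brake $x$ on an $\varepsilon$-neighbourhood of each of its $n-1$ prescribed collisions — keeping adjacent electrons separated by a small gap — and then choose $\varepsilon=\varepsilon(\mu)\to 0$ slowly enough so that $\mu\sum_{i<j}\int_0^T g_{ij}(\bar q^{\varepsilon}_j-\bar q^{\varepsilon}_i)\,dt\to 0$ while the kinetic and attractive contributions converge to those of $x$. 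Fed into the min-max characterisation behind Theorem \ref{thm:solution_helium}, this produces a uniform bound $\mathcal{A}_\mu(q^\mu)\le M$.

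Assumption \eqref{assumption:homogenuity} (of Ambrosetti--Rabinowitz type) converts this action bound into uniform $H^1$ bounds: $\|\dot q^\mu\|_{L^2}$ and $\int_0^T\sum_i f_i(q_i^\mu)\,dt$ stay bounded. Along a subsequence, $q^\mu\rightharpoonup x$ weakly in $H^1([0,T];\mathbb{R}^n)$ and uniformly on $[0,T]$, with the pointwise ordering $x_i\le x_{i+1}$ preserved. Let $\mathcal{C}\subset[0,T]$ be the collision set of $x$, namely the times where $x_i=x_{i+1}$ for some $i$ or $x_1=0$. On any compact $K\Subset[0,T]\setminus\mathcal{C}$ the separations $q^\mu_{i+1}-q^\mu_i$ and the nuclear distance $q^\mu_1$ are uniformly bounded below, so $\mu g_{ij}'(q_j^\mu-q_i^\mu)\to 0$ uniformly on $K$ while $f_i'(q_i^\mu)$ stays bounded. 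The equations \eqref{eq:helium_mu} then give uniform $C^2$ bounds on $K$ and, by Arzelà--Ascoli, $q^\mu\to x$ in $C^2(K)$. Passing to the limit in \eqref{eq:helium_mu} produces $\ddot{x}_i=f_i'(x_i)$ on each connected component of $[0,T]\setminus\mathcal{C}$, together with energy conservation of each $x_i$ there.

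It remains to identify $x$ with the folded $nT$-brake. The $C^2_{\mathrm{loc}}$ convergence transfers the brake conditions: at any endpoint where $x_i$ is not in contact with a neighbour, $\dot{x}_i$ vanishes (and $x_1(0)=0$). A Kepler trajectory in the attractive potential $f_i$ cannot have zero velocity at both endpoints of $[0,T]$ without colliding with the nucleus in the interior, which is excluded by the ordering; hence for each $i\ge 2$ the particle $x_i$ is forced to collide with a neighbour at $t=0$ or $t=T$. An interior contact $x_i(t_\ast)=x_{i+1}(t_\ast)$ with $t_\ast\in(0,T)$ is ruled out by combining the matching of positions and velocities it would impose with energy conservation, the inequality in \eqref{assumption:convexity} between $f_i'$ and $f_{i+1}'$, and the strict-ordering hypothesis implicit in \eqref{assumption:attraction_infinity}. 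Monotonicity of each piece then lets one concatenate $x_i|_{[0,T]}$ — forward for odd $i$ and reversed for even $i$ — into a single continuous curve $\eta:[0,nT]\to[0,+\infty)$, $C^1$ away from $t=0$, that solves $\ddot\eta=F(\eta,t)$ with $\eta(0)=0$ and $\dot\eta(nT)=0$. Uniqueness of such a brake trajectory forces $\eta$ to be the one prescribed in \eqref{eq:nT_brake}, so the full family (not just a subsequence) converges.

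The main obstacle I foresee is the uniform upper bound $\mathcal{A}_\mu(q^\mu)\le M$: the min-max class of Theorem \ref{thm:solution_helium} must be respected while the test configuration $\bar q^{\varepsilon(\mu)}$ absorbs the repulsion that diverges near the limit collisions, and the calibration of $\varepsilon(\mu)$ against $\mu$ is delicate under the weakest homogeneity $\beta_{ij}\ge \alpha^\ast\ge 1$ admitted by \eqref{assumption:homogenuity}. A secondary technical hurdle is excluding interior collisions of the limit and pinning down the alternating collision pattern at $\{0,T\}$; this requires carefully combining energy conservation, the monotonicity afforded by \eqref{assumption:convexity}, and the brake symmetry together with the parity considerations dictated by the folded structure \eqref{eq:nT_brake}.
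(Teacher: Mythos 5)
Your broad outline — extract a weak $H^1$ limit, upgrade to $C^2$ convergence away from collisions, and identify the limit with the folded $nT$-brake via uniqueness — matches the paper's strategy. Two points, however, are off or incomplete.

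First, a minor one: the uniform action bound is not delicate. The min-max value satisfies $c_\mu\le c_0$ automatically (Lemma~\ref{lemma:construction_boundary}$(iii)$ and Lemma~\ref{lemma:uniform_bounds}), because on the sphere $\Theta(\partial B'_r)$ the repulsion terms are \emph{subtracted}, so $\mathcal{A}(\Theta(s))\le c_0$ holds for all $s$ regardless of $\mu$. There is no need to calibrate a smoothing parameter $\ve(\mu)$ against $\mu$; the bound is $\mu$-independent from the start.

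The genuine gap is in the identification step. You correctly observe that each $x_i$, being concave, cannot have vanishing velocity at both endpoints, hence must touch a neighbour at $t=0$ or $t=T$. But to conclude that the limit is the folded $nT$-brake you must also establish: (a) that there are no interior contacts $x_i(t_*)=x_{i+1}(t_*)$ with $t_*\in(0,T)$; (b) that no triple collisions occur; (c) that the collision pattern alternates as in \eqref{eq:nT_brake}; and (d) that the reflection at each collision is elastic, so that the concatenated curve $\eta$ is $C^1$. Your proposal gestures at (a)--(d) (``is ruled out by combining\ldots'', ``carefully combining\ldots'') but supplies no argument, and these are precisely the hard parts. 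The paper does not prove these facts by ODE analysis of the limit; it proceeds variationally. It first shows (via a concavity argument applied to $\sum_{k\ge j}q^\mu_k$) that the weak limit lies in the set $\mathcal{D}$ of configurations in which every consecutive pair touches at least once, then shows strong $H^1$ convergence to a minimizer of $\mathcal{F}$ in $\mathcal{D}$, and finally invokes a dedicated lemma (Lemma~\ref{lem:minimisers_D}) proving that any minimizer of $\mathcal{F}$ in $\mathcal{D}$ must in fact lie in $\mathcal{C}$: this single lemma forces uniqueness of the collision instant per pair, double (not triple) collisions, elastic reflection, no interior contacts, and the alternating endpoint pattern, all through clever action-lowering competitors. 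Without an analogue of this lemma your identification of $\eta$ with the brake remains a sketch. A further technical ingredient you are missing is the auxiliary regularization: the paper works with the system in which $f_j$, $j\ge 2$, is replaced by the bounded $f_j^{\ve_2}$; this keeps $(f_j^{\ve_2})'(q_j^{\mu,\ve_2})$ uniformly bounded, which is needed both to integrate the equations of motion (to show $\mu\int g'_{jk}$ stays bounded and hence $\mu\int g_{ij}\to 0$) and to carry out the Fatou/lower-semicontinuity argument that upgrades weak to strong $H^1$ convergence; only afterwards is $\ve_2$ removed using the uniform lower bound on the collision value $q_1(T)$ coming from concavity.
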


In Chemistry, Theorem \ref{thm:mu_to_0} can apply to highly charged cations in the limit of small $1/Z$ (after scaling $q_i\mapsto Z^{1/3}q_i$) and fixed $n$. Examples of highly ionized heavy-metal cations include $\text{Fe}^{+17}$, $\text{Xe}^{44+}$, or $\text{U}^{91+}$, which may occur in physical environments such as electron beam ion traps, accelerators, or stellar interiors.

The existence of periodic solutions of \eqref{eq:helium_newton} has profound implications in the quantum mechanics description of atoms (see for instance \cite{gutzwiller, rost_tanner}). Periodic orbits form the backbone of semiclassical approximations and are of the utmost importance in the path integral formulation of quantum mechanics, as well as in certain formulas connecting quantization to classical solution of a deterministic system, such as the Gutzwiller trace formula. This connection is highly relevant for interpreting spectral properties and predicting quantum states (see \cite{di2024ground}). In fact, in certain quantum eigenstates of classical Hamiltonians, the spatial probability density is not uniform but instead shows enhanced localization along the paths of classical periodic orbits. This surprising phenomenon, called quantum scarring, demonstrates how classical structures can persist and influence quantum mechanics well beyond the semiclassical limit, and has implications for spectroscopy, quantum transport, and the design of nanoscale systems. Our results have the potential to not only advance the theoretical understanding of the 
$n$-electron problem, but also to open new avenues for computational techniques in quantum chemistry and atomic physics. By combining variational and perturbative methods, they offer a foundation for numerically constructing eigenfunctions that concentrate around the periodic solutions we identify.

The approach of the paper is variational.
We wish to characterise solutions of \eqref{eq:helium_equation} as critical points of the Lagrangian action functional:
\begin{equation*}
	\mathcal{A}(q) = \int_0^T \sum_{i=1}^n \frac12 \vert\dot{q}_i\vert^2+f_i(\vert q_i\vert )-\sum_{j<i}g_{ij}(\vert q_i-q_j\vert)
\end{equation*}
on the Hilbert space $H^1([0,T],\mathbb{R}^n)$. However, the functional $\mathcal{A}$ is undefined on the collision manifold \[
\bigcup_{i<j}\left\{q \in H^1([0,T],\mathbb{R}^n) : \exists t \in [0,T] : q_i(t) = q_j(t)\right\}
\]
 due to the singularities of opposite sign of $f_i$ and $g_{ij}$. Moreover, collisions between the first electron and the nucleus cannot be avoided. Therefore, the notion of critical point needs to be generalised in a suitable way.

To cope with this problem, we introduce a family of $C^{1,1}$ functionals $\mathcal{A}_\ve$  defined on the open set of $H^1([0,T],\mathbb{R}^n) \cap\{q_1(0) =0\}$
\[
	\mathcal{H} = \{(q_1,\dots,q_n)\in H^1([0,T],\mathbb{R}^n): q_i<q_{i+1},\ q_1(0)=0\}.
\]
In any functional $\mathcal{A}_\ve$ the singularities of the functions $f_i$ are dampened, so that the electrons are free to move on the real line. The functionals are defined as
\[
\mathcal{A}_\ve (q)= \int_0^T \sum_{i=1}^n \frac12 \vert\dot{q}_i\vert^2+f^\ve_i(\vert q_i\vert )-\sum_{j<i}g_{ij}(\vert q_i-q_j\vert),
\]
where $f^\ve_i$ is a family of $C^{1,1}(\mathbb{R})$ functions converging pointwise to $f_i$ on $(0,+\infty)$. From a variational perspective, we characterise frozen planet orbits as saddle points of $\mathcal{A}_\ve$, using a Lusternik-Schnirelmann-type theory for manifolds with boundary, developed in \cite{majer95,majer_terracini93}.

The structure of the paper is the following. In Section \ref{sec:critical_point_theorem}, we prove a version of Theorem \ref{thm:solution_helium} (Theorem \ref{thm:general_existence_result}) under a modification of assumption \ref{assumption:attraction_infinity} and some additional assumptions on $f_i$, satisfied by each one of the $\ve-$approximations $f_i^\ve$. In Section \ref{sec:proof_thm1}, we show how this result implies the existence of $\mathcal{A}_\ve$-critical points and prove that they converge to a solution of \eqref{eq:helium_equation}, yielding Theorem \ref{thm:solution_helium}. Finally, Section \ref{sec:zero_charge} is devoted to the proof Theorem \ref{thm:mu_to_0}.

\section{A critical point Theorem}\label{sec:critical_point_theorem}
	In this section we adopt a critical point theory approach to show that there exists at least one solution of \eqref{eq:helium_equation}, satisfying 
	\begin{equation}
    \label{eq:boundary_conditions}
	\begin{cases}
		q_1(0)=0,\ \dot{q}_1(T)=0 \\
		\dot q_i(0) = \dot q_i(T) = 0\quad\text{for}\ i\ge 2.
	\end{cases}
	\end{equation}
	As announced in the Introduction, this will be done for a class of \emph{smoothed} problems, in which the attractive forces $f'_i$ are replaced by bounded interactions. For this reason, we will work under a slight modification of assumptions \eqref{assumption:values_f_g}-\eqref{assumption:attraction_infinity}. In particular, we require that each $f_i$ satisfies additionally:
	\begin{equation}
		\label{assumption:extra_assumptions_f}
		\begin{cases}
			\tag{H5}
		f_i \in C^{1,1}(\mathbb{R}) \\
		f'_i<0 \\
		f_i'(s)<-\nu\ \text{ for some }\nu>0 \text{ and all } s\le0.
		\end{cases} 
	\end{equation}
    Moreover, we will assume that the part of assumption \eqref{assumption:homogenuity} involving $f_i$ and assumption \eqref{assumption:convexity} hold for all $s\in \mathbb
    {R}$. Clearly, these additional hypotheses will be consistent with the choice of the $\ve$-approximating functions $f_i^\ve$ (cf. Section \ref{sec:proof_thm1}).
    In addition, we will work under the following stronger version of \eqref{assumption:attraction_infinity}
    \begin{equation}
    \label{assumption:attraction_infinity_2}
			\tag{H3'}
            \exists s_0>0 : \forall s>s_0, \forall t_i>0 : s-t_i\ge \frac{s}{2n},
        \quad  f'_j(s)-\sum_{i=0}^{j-1}g_{ij}'(s-t_i)< 0.
    \end{equation}
     In Section \ref{sec:proof_thm1} we will prove that assumption \eqref{assumption:attraction_infinity} gives an a priori bound on the $C^0$ norm of any solution of \eqref{eq:helium_newton} with boundary conditions given by \eqref{eq:boundary_conditions}. We will then show how to modify any function satisfying \eqref{assumption:values_f_g}-\eqref{assumption:homogenuity} outside a compact set in order for Assumption \eqref{assumption:attraction_infinity_2}  to hold.
    
    We will prove the following:
	 \begin{thm}
		\label{thm:general_existence_result}
         Let $f_i$ and $g_{ij}$ be functions satisfying the assumptions \eqref{assumption:values_f_g},\eqref{assumption:homogenuity}, \eqref{assumption:attraction_infinity_2} and \eqref{assumption:extra_assumptions_f}, for all $i<j$. Then, for any $T>0$, there exists a collisionless solution $q$ of \eqref{eq:helium_equation} which satisfies the boundary conditions \[
         \dot q_i(0) = \dot q_i(T) = 0 \text{ for } i\ge2 \text{ and } \dot{q}_1(T) = q_1(0)=0.
         \]
         In particular, $q_i(t) < q_j(t)$ for all $t\in [0,T]$ and $i < j$ and $q_1(t)>0$, as soon as $t>0.$
	\end{thm}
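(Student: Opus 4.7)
The plan is to produce a critical point of the action functional $\mathcal{A}$ on the open manifold $\mathcal{H}$ by a minimax argument, and then to verify that such a critical point is a classical, collisionless solution of the Euler--Lagrange system \eqref{eq:helium_equation} with the prescribed boundary conditions. The Dirichlet condition $q_1(0)=0$ is built into the definition of $\mathcal{H}$, so the remaining conditions $\dot q_1(T)=0$ and $\dot q_i(0)=\dot q_i(T)=0$ for $i\ge 2$ will appear as natural Neumann conditions for free endpoints in the Hamilton principle. Assumption \eqref{assumption:extra_assumptions_f} makes $\mathcal{A}$ a $C^{1,1}$ functional on $\mathcal{H}$, so the Lusternik--Schnirelmann-type theory for $C^{1,1}$ functionals on manifolds with boundary developed by Majer--Terracini is directly available, the ``boundary'' being the collision set $\partial\mathcal{H}=\{q\in H^1:q_i(t)=q_{i+1}(t)\text{ for some }i,t\}$, on which $\mathcal{A}\to -\infty$ through the $-g_{ij}$ term.

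The central analytic step is the Palais--Smale condition for $\mathcal{A}$ on $\mathcal{H}$. Given a PS sequence $(q^k)\subset\mathcal{H}$ with $|\mathcal{A}(q^k)|\le C$ and $d\mathcal{A}(q^k)\to 0$, I would first combine the Ambrosetti--Rabinowitz-type identity \eqref{assumption:homogenuity} (applied to $2\mathcal{A}(q^k)-\langle d\mathcal{A}(q^k),q^k\rangle$) with the convergence $d\mathcal{A}(q^k)\to 0$ to obtain a uniform $L^2$ bound on $\dot q^k$, and symmetrically a uniform bound on $\int_0^T\sum g_{ij}(|q^k_j-q^k_i|)\,dt$, preventing consecutive electrons from staying close for too much time. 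Next, a uniform $L^\infty$ bound would follow from \eqref{assumption:attraction_infinity_2}: no electron can drift to $+\infty$ along a PS sequence without violating the equation, while the slope condition $f'_i<-\nu$ on $(-\infty,0]$ from \eqref{assumption:extra_assumptions_f} prevents $q_1$ from escaping to $-\infty$. Finally, uniform separation of consecutive electrons follows because a collision would send $\int g_{ij}$ to $+\infty$, contradicting the bound just obtained. These estimates give compactness of $(q^k)$ in $\mathcal{H}$ by standard weak--strong convergence in $H^1$.

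Next I would identify a nontrivial minimax class. Exploiting the decay \eqref{assumption:values_f_g} at infinity and the divergence of $-g_{ij}$ near the collision set, I would exhibit two suitable subsets of $\mathcal{H}$ on which $\mathcal{A}$ takes contrasting values, producing a topological linking that any admissible deformation must cross. The associated minimax value
\[
c=\inf_{\gamma\in\Gamma}\max_{s\in[0,1]}\mathcal{A}(\gamma(s))
\]
is then finite, and applying the Majer--Terracini deformation lemma together with the PS condition of the previous step delivers a critical point of $\mathcal{A}$ at level $c$. Since $\mathcal{A}\to -\infty$ at $\partial\mathcal{H}$, this critical point lies in the interior of $\mathcal{H}$, is automatically collisionless, and solves the Euler--Lagrange equations with the required natural boundary conditions. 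The strict inequality $q_1(t)>0$ for $t>0$ then follows by an ODE comparison argument from $q_1(0)=0$, $\dot q_1(T)=0$, and $f_1'<0$.

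The hard part, in my view, will be the careful simultaneous verification of all three ingredients of the Palais--Smale step in the smoothed setting: because $f_i$ is now bounded near the origin, one can no longer rely on a singularity to confine $q_1$ to the positive half-line, and the one-sided slope bound in \eqref{assumption:extra_assumptions_f} together with \eqref{assumption:attraction_infinity_2} must be used with care to produce the uniform spatial bound. A further subtle issue is the choice of the minimax class: the resulting critical level $c$ must be strictly above the infimum of $\mathcal{A}$ on the trivial sectors of $\mathcal{H}$, to guarantee that the critical point found is a genuine nontrivial frozen planet orbit rather than a degenerate configuration.
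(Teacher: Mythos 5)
Your sketch captures the right flavour (Majer--Terracini theory, Ambrosetti--Rabinowitz identity for compactness, and a topological minimax), but it misidentifies the central structure and this leads to a real gap. In the Majer--Terracini framework as it must be used here, the ``boundary'' is \emph{not} the collision set $\{q_i = q_{i+1}\}$: it is the level set $\{\mathcal{G}_{\lambda,c}=b\}$ of an auxiliary $C^2$ functional
\[
\mathcal{G}(q) = \sum_{i=1}^{n-1}\int_0^T |q_{i+1}-q_i|^2 + \frac{1}{|q_{i+1}-q_i|^2}+\beta\sum_{i<j}g_{ij}(q_j-q_i),
\qquad
\mathcal{G}_{\lambda,c} = \mathcal{G}+\lambda(c-\mathcal{A}),
\]
which simultaneously penalizes escape to infinity and approach to collision. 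Your proposal contains no analogue of $\mathcal{G}$, and without it the scheme does not close: the Palais--Smale condition and the non-deformability property must be verified relative to sublevels $\{\mathcal{G}_{\lambda,c}\le b\}$, because $\mathcal{A}$ alone provides no control on $\|q_i\|_2$ for $i\ge 2$ (only $q_1$ enjoys a Poincar\'e inequality through $q_1(0)=0$), and a naive PS sequence could both drift and approach a collision.

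Moreover, the premise that $\mathcal{A}\to-\infty$ on the collision set and therefore ``the critical point lies automatically in the interior'' is incorrect on two counts. First, for $\alpha\in(0,1)$ the singular integral $\int_0^T g_{ij}(q_i-q_j)$ can stay bounded along an $H^1$-bounded sequence converging to a curve with an isolated touching point, so $\mathcal{A}$ need not blow up near $\partial\mathcal{H}$ at all. Second, even when $\mathcal{A}$ does tend to $-\infty$, that makes the situation worse, not better: the minimax level along an unconstrained deformation class can itself collapse to $-\infty$ through the collision set, which is precisely why one needs the $\mathcal{G}$-constraint and the Lagrange-multiplier transversality condition $\nabla\mathcal{A}\ne\lambda\nabla\mathcal{G}_{\lambda,c}$ on $\{\mathcal{A}=c^*\}\cap\{\mathcal{G}_{\lambda,c}=b\}$. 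Your ``linking between two subsets'' is also too vague to produce a nontrivial level: the paper constructs a concrete $(n-2)$-sphere $S_r=\Theta(\partial B'_r)$ sitting in $\{\mathcal{G}_{\lambda,\tilde c}>b\}$, built from translates of a minimizer of the uncoupled Kepler-type functional $\mathcal{F}$ on the folded-brake class $\mathcal{C}$, and shows by a degree argument that any disk capping $S_r$ inside $\{\mathcal{A}\le\tilde c\}$ must contain a point where both $\mathcal{A}$ is bounded below and $\mathcal{G}$ is bounded above by constants independent of $b$; letting $b\to\infty$ then gives the finite minimax value $c^*$. Your PS step does correctly anticipate the roles of \eqref{assumption:homogenuity}, \eqref{assumption:attraction_infinity_2} and \eqref{assumption:extra_assumptions_f}, and the final ODE argument for $q_1(t)>0$ is sound, but without the $\mathcal{G}$ machinery the variational core is missing.
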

    We characterise solutions of Theorem \ref{thm:general_existence_result} as critical points of the action functional $\mathcal{A}$ defined as
      \begin{equation}
    	\label{eq:def_A}
    	\mathcal{A}(q) = \int_0^T\sum_{i=1}^n  \frac{1}{2} \vert\dot{q}_i\vert^2+f_i(q_i)-\sum_{i<j}g_{ij}(q_j-q_i)
    \end{equation}
    in the following open subset $\mathcal{H}$ of the Hilbert space $H^1([0,T],\mathbb{R}^n)\cap\{q_1(0) =0\}$      
    \[
		\mathcal{H} = \{(q_1,\dots,q_n)\in H^1([0,T],\mathbb{R}^n): q_i<q_{i+1},\ q_1(0)=0\}.
    \]
    To show that there exists a critical point of $\mathcal{A}$ in $\mathcal{H}$, we adopt the Lusternik-Schnirelmann theory for manifolds with boundary developed in \cite{majer95,majer_terracini93}. Let us recall the setting and the statement of \cite[Lemma 2.1]{majer_terracini93}.
    Let $\mathcal{G}: \mathcal{H} \to \mathbb{R}$ be a $C^2$ functional. The following lemma holds true.
    
    \begin{lemma}
    	\label{lemma:critical_point}
    	Let us assume that there exists $c^*,\tilde c,b$ and $ \tilde{b}$ in $ \mathbb{R}$ such that $c^*<\tilde{c}$, $\tilde b <b$ and the following conditions hold:
    	\begin{enumerate}[label = \roman*)]
    		\item $\overline{\{\mathcal{A}\le\tilde{c}\}\cap\{\mathcal{G}\le b\}}\footnote{Here, the closure is meant in $H^1([0,T])$ with respect to the strong topology.}\subset \mathcal{H}$  and it is bounded;
    		\item the level sets $b$ of $\mathcal{G}$ are regular, i.e.,
    		\begin{equation}    
    			\label{eq:regularity_boundary_CPTlemma}	 
    			\nabla \mathcal{G}(x) \ne 0,\ \text{ for any }x\in\{\mathcal{A}\le \tilde c\}\cap\{\mathcal{G} \ge \tilde b\};
    		\end{equation}
    		\item any sequence $(x_k) \subseteq \mathcal{H}$ such that
    		\begin{equation}
    			\label{eq:ps_condition1_CPTlemma}
    			\lim\limits_{k\to+\infty} \mathcal{A} (x_k)=c^*,\quad  \limsup\limits_{k\to+\infty} \mathcal{G}(x_k)\le b, \quad \lim\limits_{k\to+\infty}\nabla \mathcal A(x_k)=0
    		\end{equation}
    		has a convergent subsequence;
    		\item any sequence $(x_k)\subset\mathcal{H}$ such that
    		\begin{equation}
    			\label{eq:ps_condition2_CPTlemma}
    			\lim\limits_{k\to+\infty}\mathcal{A} (x_k) = c^*,\quad  \lim\limits_{k\to+\infty}\mathcal{G}(x_k)= b, \quad \lim\limits_{k\to+\infty}\left[\nabla \mathcal A(x_k)-\lambda_k \nabla \mathcal{G}(x_k)\right] = 0,
    		\end{equation}
    		for $\lambda_k\ge0$, has a convergent subsequence;
    		\item for any $\lambda>0$, we have:    
    		\begin{equation}
    			\label{eq:angle_gradients_CPTlemma}
    			\nabla \mathcal{A}(x)\ne \lambda \nabla \mathcal{G}(x),\ \text{ for any } x \in \{\mathcal{A}= c^*\}\cap \{\mathcal{G}= b\};
     		\end{equation}
      		\item for any $\ve\in (0,\tilde c-c^*]$ consider the sets: 
        	\begin{equation}
      			\label{eq:non_deformable_sets_CPTlemma}
      			X_{\pm\ve}:= \{\mathcal{A}\le c^*\pm\ve\}\cup\left(\{\mathcal{A} \le \tilde c \}\cap \{\mathcal{G}\ge b \}\right).
      		\end{equation}
      		The set $X_\ve$ cannot be deformed into $X_{-\ve}$.
        \end{enumerate}
        Then, there exists a critical point of $\mathcal{A}$ lying in $\{\mathcal A = c^*\}\cap \{\mathcal{G}\le b\}.$
    \end{lemma}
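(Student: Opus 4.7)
The plan is to argue by contradiction: assume that no critical point of $\mathcal{A}$ exists in the set $K := \{\mathcal{A} = c^*\} \cap \{\mathcal{G} \le b\}$, and construct a deformation of $X_\ve$ into $X_{-\ve}$, thereby contradicting hypothesis (vi). This is the standard Lusternik--Schnirelmann deformation strategy, adapted to the ``manifold with boundary'' $\{\mathcal{G} \le b\}$.

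First, I would use conditions (iii), (iv), and (v) to extract a quantitative non-degeneracy statement: by a contradiction argument extracting a sequence that would violate one of the three hypotheses, there exist $\delta > 0$ and a neighborhood $U$ of $K$ in $\mathcal{H}$ such that, for every $x \in U$ with $\mathcal{G}(x) \le b$ and $|\mathcal{A}(x) - c^*|$ small, either $\|\nabla \mathcal{A}(x)\| \ge \delta$, or $x$ lies close to $\{\mathcal{G} = b\}$ and $\|\nabla \mathcal{A}(x) - \lambda \nabla \mathcal{G}(x)\| \ge \delta$ for every $\lambda \ge 0$. Hypothesis (ii) additionally gives $\|\nabla \mathcal{G}(x)\| > 0$ near the boundary, so that the orthogonal projection onto the tangent space of the regular level $\{\mathcal{G} = b\}$ is well-defined on the whole of $U$.

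Next, on $U$ I would construct a locally Lipschitz pseudo-gradient vector field $V$ enjoying: (a) $\langle \nabla \mathcal{A}(x), V(x) \rangle \le -c < 0$; (b) $\|V(x)\| \le M$ uniformly; (c) a compatibility with $\{\mathcal{G} = b\}$ that prevents the induced flow from creating obstructions to the deformation. Away from the boundary one sets $V$ proportional to $-\nabla \mathcal{A}$; near $\{\mathcal{G} = b\}$, $V$ is built from the projection of $-\nabla \mathcal{A}$ onto the tangent space of the boundary level set, and the two pieces are glued by a cut-off depending on $\mathcal{G}(x) - b$. The quantitative non-degeneracy of the previous step guarantees that $V$ can be chosen with the required uniform descent rate $c$ and uniform bound $M$, while hypothesis (i) ensures that the trajectories of $V$ starting from $\{\mathcal{A} \le \tilde c\} \cap \{\mathcal{G} \le b\}$ remain in a bounded subset of $\mathcal{H}$ for all positive times, so that the flow $\phi_t$ of $V$ is globally defined on the region of interest. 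For $\ve > 0$ sufficiently small, $\phi_t$ applied for time $t_0 \sim 2\ve/c$ pushes $\{c^* - \ve \le \mathcal{A} \le c^* + \ve\} \cap \{\mathcal{G} \le b\}$ into $\{\mathcal{A} \le c^* - \ve\}$; points already in $X_{-\ve}$ can be held fixed by a standard cut-off. The resulting continuous map $X_\ve \to X_{-\ve}$ yields the forbidden deformation.

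The principal obstacle is the construction of $V$ near $\{\mathcal{G} = b\}$: one must ensure that the projected tangential component of $-\nabla \mathcal{A}$ is uniformly bounded below on a \emph{full neighborhood} of $K$, not merely at the points of $K$ themselves. This is precisely the role of the constrained Palais--Smale condition (iv): combined with the pointwise transversality (v) at $K$, it upgrades to the required uniform lower bound, and hence to the descent property (a). The boundedness clause in (i) is essential to rule out escape of trajectories to $\partial \mathcal{H}$, where the functional $\mathcal{A}$ becomes undefined.
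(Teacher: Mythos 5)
Your proposal follows essentially the same strategy as the paper's proof (Appendix \ref{appendix}, Lemma \ref{lemma:deformation_lemma}): argue by contradiction assuming the critical set $K$ is empty, upgrade the pointwise hypotheses $iii)$--$v)$ to a quantitative non-degeneracy and angle bound on a uniform neighborhood, construct a pseudo-gradient field that decreases $\mathcal{A}$ while respecting the constraint $\{\mathcal{G}\le b\}$, and flow to produce the forbidden deformation of $X_\ve$ into $X_{-\ve}$.

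The one place you diverge from the paper's concrete construction is the handling of the boundary $\{\mathcal{G}=b\}$. You propose using the \emph{tangential projection} of $-\nabla\mathcal{A}$ onto $T\{\mathcal{G}=b\}$ there, which keeps $\mathcal{G}$ constant along trajectories near the level. The paper instead takes $Z = \varphi_1\varphi_2\bigl(-\nabla\mathcal{A}/\|\nabla\mathcal{A}\| + \varphi_3\,\nabla\mathcal{G}/\|\nabla\mathcal{G}\|\bigr)$, which deliberately adds a \emph{normal} component so that $\mathcal{G}$ \emph{strictly increases} near $\{\mathcal{G}=b\}$ while $\mathcal{A}$ still decreases (the angle bound $\langle\nabla\mathcal{A},\nabla\mathcal{G}\rangle/(\|\nabla\mathcal{A}\|\|\nabla\mathcal{G}\|)\le 1-\ve_2$, obtained from $iv)$ and $v)$, is what makes both signs work simultaneously). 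This choice is slightly more robust: it makes $\{\mathcal{G}\ge b\}$ forward-invariant, guarantees transversality of the flow to $\{\mathcal{G}=b\}$, and lets the paper define the final deformation via continuous \emph{hitting times} $T(x)=\min\{T_1(x),T_2(x)\}$ rather than the "standard cut-off" you invoke. Your cut-off approach needs a bit more care than your wording suggests: if the field is simply scaled to vanish on $X_{-\ve}$, the descent rate degenerates near $\partial X_{-\ve}$ and a flow of fixed duration need not reach $X_{-\ve}$; and in your interpolation region between the pure $-\nabla\mathcal{A}$ field and the tangential projection, the flow can cross into $\{\mathcal{G}>b\}$ before the cut-off sets in. These are repairable technicalities (a time reparametrization $\tau(x)$ that vanishes on $X_{-\ve}$, or switching to the paper's normal-push field, both close the gap), but as written they are the weakest link in your proposal. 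The overall logic — and in particular your correct identification that $v)$ together with $iv)$ is what upgrades pointwise transversality to a uniform bound — matches the paper.
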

    
    The proof of the Lemma relies on a deformation scheme sketched in Appendix \ref{appendix} (see Lemma \ref{lemma:deformation_lemma}). Let us briefly discuss here the assumptions of Lemma \ref{lemma:critical_point} and introduce our choice of $\mathcal{G}$.
        
    Points $ii),v)$ and $vi)$ are crucial for the deformation scheme to work. In fact, the argument core is that, if there are no critical points of $\mathcal{A}$ at level $c^*$, point $v)$ allows to build a $\mathcal{A}$-gradient like deformation leaving $\{\mathcal{G}\ge b\}$ positively invariant. This in turn violates the non deformability condition in $vi)$.
    Point $v)$ can be rephrased in terms of the gradients of the two functionals as follows. We require that $\nabla \mathcal{A}$ and $\nabla \mathcal{G}$ form a non-zero angle, this entails that there is a direction in the positive cone  spanned by $-\nabla \mathcal{A}$ and $\nabla \mathcal{G}$ (i.e. the linear combinations with positive coefficients), along which $\mathcal{A}$ decreases and $\mathcal{G}$ increases. Hypotheses $iii)-iv)$ are Palais-Smale (PS for short) compactness conditions on levels of $\mathcal{A},$ relatively to the sublevel $\{\mathcal{G}\le b\}$. This suggests that a reasonable choice of $\mathcal{G}$ should take into account both the non-compactness sources of the problem: \emph{escape at infinity} and \emph{collisions}. A sensible guess would be the following function 
    	\begin{equation}
    	\label{eq:def_G}
    	\mathcal{G}^\beta(q) = \sum_{i=1}^{n-1}\mathcal{G}^\beta_i(q), \quad 
    	\mathcal{G}^\beta_i(q) = \int_0^T \vert q_{i+1}-q_{i}\vert^2+\frac{1}{\vert q_{i+1}-q_{i}\vert^2}+\beta\sum\limits_{i<j} g_{ij}(q_j-q_i).
    \end{equation}    
	Note that assumption $i)$ of Lemma \ref{lemma:critical_point} is trivially satisfied by $\mathcal{G}^\beta$. Indeed, for any $q\in\{\mathcal{A}\le\tilde{c}\}\cap\{\mathcal{G}^\beta\le b\}$, it is straightforward to check that $\|\dot{q}\|_2^2\le2(\tilde{c}+ b/\beta)$. Since $\mathcal{G}^\beta$ contains a strong force term, we also have $\min_{i, t\in[0,T]}(q_{i+1}(t)-q_i(t))$ is uniformly bounded from below. 
    
	Unfortunately, it is quite hard to check point $v)$ for this choice of $\mathcal{G}^\beta$.  It is more convenient to apply Lemma \ref{lemma:critical_point} to another functional $\mathcal{G}^\beta_{\lambda,c}$ belonging to the family
    	$\Glambda^\beta(q) = \mathcal{G}^\beta(q)+\lambda(c-\mathcal{A}(q))$, for some  $\lambda, c\ge0$. 
		To simplify the notation, we will omit the $\beta$. This amounts to replace, in the discussion below (Proposition \ref{prop:no_solution_eigenvalue}), the repulsion terms $g_{ij}$ inside the action $\mathcal{A}$ with $\left(1+\frac{\beta}{1+\lambda}\right)g_{ij}$ for $\beta$ small enough. We will work with the functional 
		\begin{equation}
    	\label{eq:def_modified_G}
    	\Glambda(q) = \mathcal{G}(q)+\lambda(c-\mathcal{A}(q)),\quad  \lambda, c\ge0. 
    \end{equation}
    The remaining part of this section is devoted to the verification of the assumptions of Lemma \ref{lemma:critical_point} for functionals $\mathcal{A}$ and $\mathcal{G}_{\lambda,c}$.

\subsection{Assumption $ii)$ and $v)$ of Lemma \ref{lemma:critical_point}}
	In this section we show that for any choice of $\tilde{c}\in\mathbb{R}$ and any level $c\le\tilde{c}$, and for a suitable choice of $\lambda$, we can find a direction decreasing the value of $\mathcal A$  and increasing the value of  $\Glambda$ along the set \begin{equation}
		\label{eq:def_domain_eigenvalue_eq}
		\{\mathcal{A} = c\}\cap\{\Glambda = b\}, \text{ for }b\text{ large enough. }
	\end{equation}
	As discussed before, this will indeed guarantee that assumptions $ii)$ and $v)$ of Lemma \ref{lemma:critical_point} holds for $\mathcal{A}$ and $\mathcal{G}_{\lambda,c}$, choosing $c=c^*$ and any $b>b(\tilde{c})$ where $b(\tilde{c})$ is the threshold determined in Proposition \ref{prop:no_solution_eigenvalue} below.  Recall that in this section we will work under hypothesis \eqref{assumption:values_f_g},\eqref{assumption:homogenuity}, \eqref{assumption:attraction_infinity_2} and \eqref{assumption:extra_assumptions_f}. We now prove the following
	\begin{prop}
		\label{prop:no_solution_eigenvalue}
		Let $\tilde{c}\in\mathbb{R}$. There exist $b(\tilde{c}), \lambda(\alpha, T)>0$ such that
		\begin{enumerate}[label=\roman*)]
			\item For any $c\le\tilde{c}$, $b>b(\tilde{c})$ and $\lambda\ge \lambda(\alpha, T)$, there is no solution of the equation
			\[
			\nabla \mathcal{A}(q) = \sigma \nabla\Glambda(q),\ \text{ with }  \sigma>0,\ q\in \{\mathcal{A} \le \tilde{c}\}\cap\{\Glambda = b\}.
			\]
			\item For any $c\le\tilde{c}$, $b>b(\tilde{c})$ and $\lambda \ge \lambda(\alpha, T)$ we have
			\[
			\nabla \Glambda(q)\ne0\ \text{for any}\ q\in\{\mathcal{A}\le \tilde{c}\}\cap \{\mathcal{G}_{\lambda,c}=b\}.
			\]
		\end{enumerate}
	\end{prop}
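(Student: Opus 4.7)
The plan is first to reduce the gradient-alignment relation to a simpler eigenvalue-like one. Since $\nabla \mathcal{G}_{\lambda,c} = \nabla \mathcal{G} - \lambda \nabla \mathcal{A}$, the equation $\nabla \mathcal{A}(q) = \sigma \nabla \mathcal{G}_{\lambda,c}(q)$ with $\sigma \ge 0$ rearranges to $(1+\sigma\lambda)\nabla\mathcal{A}(q) = \sigma \nabla \mathcal{G}(q)$, i.e. $\nabla\mathcal{A}(q) = \mu\,\nabla\mathcal{G}(q)$ with $\mu := \sigma/(1+\sigma\lambda) \in [0,1/\lambda)$; the situation $\nabla\mathcal{G}_{\lambda,c}(q)=0$ in assertion (ii) corresponds to the endpoint $\mu = 1/\lambda$. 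Thus both items follow once I exhibit $b_0(\tilde c)$ and $\lambda(\alpha, T)$ such that, for $\lambda \ge \lambda(\alpha,T)$, every $q \in \mathcal{H}$ with $\mathcal{A}(q) \le \tilde c$ and $\nabla\mathcal{A}(q) = \mu\,\nabla\mathcal{G}(q)$ for some $\mu \in [0,1/\lambda]$ satisfies $\mathcal{G}(q) \le b_0(\tilde c)$. The threshold for $\mathcal{G}_{\lambda,c}$ would then be $b(\tilde c) := b_0(\tilde c) + \lambda(\tilde c - c)$.

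Second, I would test the equation against $q$ itself, which is admissible because $q_1(0)=0$, and use \eqref{assumption:homogenuity} termwise. A direct Ambrosetti--Rabinowitz-type manipulation on $\mathcal{A}$ yields
\begin{equation*}
\tfrac{\alpha+2}{2}\,\|\dot q\|_2^2 \;\le\; \langle \nabla \mathcal{A}(q), q\rangle + \alpha\,\mathcal{A}(q),
\end{equation*}
(the extension of \eqref{assumption:homogenuity} to all $s \in \mathbb{R}$ via \eqref{assumption:extra_assumptions_f} takes care of possibly negative $q_i$), while the quadratic/inverse-quadratic structure of $\mathcal{G}$ combined with the second line of \eqref{assumption:homogenuity} gives
\begin{equation*}
\langle \nabla \mathcal{G}(q), q\rangle \;\le\; 2\,\mathcal{G}(q).
\end{equation*}
Substituting the relation $\langle \nabla\mathcal{A}(q),q\rangle = \mu\langle \nabla\mathcal{G}(q),q\rangle$ and $\mu \le 1/\lambda$ produces $\tfrac{\alpha+2}{2}\|\dot q\|_2^2 \le \tfrac{2}{\lambda}\mathcal{G}(q) + \alpha\,\tilde c$, a bound sub-linear in $\mathcal{G}$ as soon as $\lambda$ is taken large.

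Third, I would upgrade this $H^1$-estimate to an a priori $L^\infty$-bound on $q$ independent of $\mathcal{G}$. The strong-force control $\int_0^T (q_{i+1}-q_i)^{-2} \le \mathcal{G}(q)$, together with the kinetic bound from Step 2, the initial datum $q_1(0)=0$, and Morrey's embedding, yields $\|q\|_\infty \lesssim 1 + \sqrt{\mathcal{G}(q)}$ and a lower bound on consecutive spacings. To close the loop I would invoke the strong form $\ddot q = \nabla V(q) - \mu \nabla W(q)$ at a time $t^\ast$ where $q_n$ attains its maximum (so that $\dot q_n(t^\ast) = 0$ and $\ddot q_n(t^\ast)\le 0$): applying assumption \eqref{assumption:attraction_infinity_2} either directly to $q_n$, if the neighbour $q_{n-1}$ is well separated, or, by descending along the ordered chain $q_1<\cdots<q_n$, to an index $j$ for which the spacing condition $q_j - q_i \ge q_j/(2n)$ of \eqref{assumption:attraction_infinity_2} is met, would contradict the $\ddot q_j(t^\ast)\le 0$ inequality unless $\|q\|_\infty \le M(\tilde c,\alpha,T)$. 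The $L^\infty$-bound then forces each integrand in $\mathcal{G}(q)$ to be bounded, hence $\mathcal{G}(q) \le b_0(\tilde c)$.

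The main obstacle I anticipate is precisely this last step: unlike the genuine critical-point case $\mu = 0$, the extra term $\mu\nabla W(q)$ in the strong form contributes at $t^\ast$ an expression of indefinite sign (proportional to $2\mu((q_n-q_{n-1}) - (q_n-q_{n-1})^{-3}) + O(\mu\beta)$), so it is not clear a priori that the attractive dominance in \eqref{assumption:attraction_infinity_2} survives. The asymmetric formulation of \eqref{assumption:attraction_infinity_2} (comparing a single large variable against arbitrary smaller ones $t_i$) is designed precisely so that, upon iterating through the chain and possibly switching the distinguished index, a workable contradiction can still be extracted; calibrating the constants $\lambda(\alpha,T)$ and $\beta$ so that the corrective perturbation is small enough is the delicate point.
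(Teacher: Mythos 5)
Your proposal goes astray exactly at the point you flag, and in fact the obstacle is fatal rather than merely delicate. After the (correct) reduction to $\nabla\mathcal{A}(q) = \mu\,\nabla\mathcal{G}(q)$ with $\mu\in[0,1/\lambda]$ and the Ambrosetti--Rabinowitz test against $q$, a pointwise maximum argument at the peak $t^*$ of $q_n$ cannot produce a contradiction: there, assumption \eqref{assumption:attraction_infinity_2} makes the Coulomb force on $q_n$ negative, and the $\mu\nabla\mathcal{G}$ correction contributes $-2\mu(q_n-q_{n-1})+2\mu(q_n-q_{n-1})^{-3}$, which for large gaps is again negative. Thus $\ddot q_n(t^*)<0$, which is entirely compatible with a maximum; there is no sign clash to exploit, no matter how $\lambda$ and $\beta$ are tuned. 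Moreover, the claimed ``lower bound on consecutive spacings'' from $\int(q_{i+1}-q_i)^{-2}\le\mathcal{G}(q)$ is unjustified, and without it an $L^\infty$-bound on $q$ does not imply $\mathcal{G}(q)\le b_0$: the strong-force integrals can be arbitrarily large even with $\|q\|_\infty$ controlled, precisely because spacings can become small at isolated instants.

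The paper avoids both issues by splitting the argument. When $R:=\sum_i\int_0^T(q_{i+1}-q_i)^2$ is large, it tests the relation $\nabla\mathcal{A}=\sigma\nabla\mathcal{G}$ against the \emph{constant} vector $w_j=(0,\dots,0,1,\dots,1)$ adapted to the largest gap (Lemma \ref{lemma:proof_proportionality_gradients}~$ii)$ provides a uniform lower bound on that gap). Since $w_j$ is $t$-independent, both kinetic terms vanish, leaving $\langle\nabla\mathcal{A},w_j\rangle<0$ by \eqref{assumption:attraction_infinity_2} and $\langle\nabla\mathcal{G},w_j\rangle>0$ (the $2(q_{j+1}-q_j)$ contribution dominates the strong-force one for large $R$) — an immediate sign contradiction with $\sigma>0$, with no calibration needed. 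When $R$ is bounded, the $H^1$ and hence $L^\infty$ norms are controlled independently of $b$; one then writes the strong Euler--Lagrange equation $\ddot q=\nabla U_\sigma(q)$ and bounds the conserved energy $h$ from above via \eqref{assumption:homogenuity}, whereas $\mathcal{G}_{\lambda,c}(q)=b$ large forces some spacing to become small at some instant, so $g_{j+1,j}$ blows up and violates the energy bound. Your proposal has no analogue of this second, energy-based case, which is where the threshold $b(\tilde c)$ actually comes from.
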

	The following Lemma is needed to prove Proposition \ref{prop:no_solution_eigenvalue}.

	\begin{lemma}
		\label{lemma:proof_proportionality_gradients}
		Let $\tilde{c}\in\mathbb{R}$. For any $c\le\tilde{c}$, the following assertions hold true.
		\begin{enumerate}[label=\roman*)]
			\item Any solution $q$ of \begin{equation}
				\label{eq:proportional_gradients_Glambda}
				\nabla \mathcal{A}(q) = \sigma \nabla\Glambda(q),\ \text{ with } \sigma>0 \text{ and }\ \mathcal{A}(q) \le \tilde c
			\end{equation}
			satisfies $\nabla \mathcal{A}(q) = \sigma' \nabla \mathcal{G}(q)$ for some $0<\sigma'\le 1/\lambda.$
			\item There exist $\sigma_0, R_0, C >0$, depending only on $T$, $\tilde{c}$ and $\alpha$ such that for any $R\ge R_0$ and any solution $q$ of 
			\begin{equation}
				\label{eq:proportional_gradients_G}
				\nabla \mathcal{A}(q) = \sigma\nabla \mathcal{G}(q),\ \text{ with } 0<\sigma<\sigma_0 \text{ and }\ \mathcal{A}(q) \le \tilde c
			\end{equation} 
			having $\sum_{i=1}^{n-1}\int_0^T \vert q_i-q_{i+1}\vert^2 =R$, there exists a $j \in \{1,\dots,n-1\}$ such that the following inequality holds:
			\[ 
			\frac{C R}{n-1}\le C \int_0^T \vert q_j-q_{j+1}\vert^2\le \min_{t \in[0,T]}\vert q_j(t)-q_{j+1}(t)\vert^2.
			\]
            Moreover, for any $1 \le i \le n$ and $t \in [0,T]$ we have
            \[
             \vert q_i(t) \vert  \le 2 n \min_{t \in[0,T]} \vert q_j(t)-q_{j+1}(t)\vert.
            \]
		\end{enumerate}
		\begin{proof}
			Let us prove $i)$. A straightforward computation shows that $\nabla \mathcal{A} = \sigma \nabla\Glambda$ implies 
			\[
			\nabla \mathcal{A} = \frac{\sigma}{1+\sigma \lambda}\nabla \mathcal{G}.
			\]
			Since $\sigma>0$, then $0<\sigma /(1+\sigma \lambda)< 1/\lambda$ and the claim is proved.
			
			For point $ii)$ we argue as follows. We can assume, without loss of generality, that $\tilde{c}\ge0$. Suppose that $q$ is a solution of  \eqref{eq:proportional_gradients_G}. Thanks to \eqref{assumption:homogenuity} and since $\mathcal{A}(q)\le\tilde{c}$, we have that:
			\[
			\begin{aligned}
				\langle\nabla \mathcal A(q) ,q\rangle &= \int_0^T\sum\limits_{i=1}^n |\dot{q}_i|^2+ f_i'(q_i)q_i -\sum_{j<i}g_{ij}'(q_i-q_j)(q_i-q_j)  \\        	  
				&\ge \int_0^T\sum\limits_{i=1}^n \vert \dot{q}_i\vert^2-\alpha f(q_i)+\alpha \sum_{j<i}g_{ij}(q_i-q_j)\\
				&\ge \frac{\alpha+2}{2}\int_0^T\sum\limits_{i=1}^n \vert \dot{q}_i\vert^2-\alpha \tilde{c}
			\end{aligned}
			\]
			Similarly we obtain that:
			\begin{equation*}
				\langle\nabla \mathcal{G}(q),q\rangle = \int_0^T\sum_{i=1}^{n-1} 2\vert q_i-q_{i+1} \vert^2-\frac{2}{ \vert q_i-q_{i+1} \vert^2} \le 2 \int_0^T\sum_{i=1}^{n-1} \vert q_i-q_{i+1} \vert^2. 
			\end{equation*}
			Thus, we have showed that:
			\begin{equation}\label{eq:inequality_alphac}
			\begin{aligned}
				0 &= \langle \nabla \mathcal{A}(q)-\sigma \nabla \mathcal{G}(q),q\rangle \\
				&\quad\ge \frac{\alpha+2}{2}\int_0^T\left(\sum_{i=1}^n \vert \dot{q}_i\vert^2-\frac{4\sigma}{\alpha+2} \sum_{i=1}^{n-1}\vert q_i-q_{i+1}\vert^2\right)-\alpha \tilde{c}
			\end{aligned}
			\end{equation}
			Moreover, since the following inequality holds for any $i=1,\ldots,n-1$
			\begin{equation*}
				\int_0^T\vert \dot q_i-\dot q_{i+1}\vert^2\le  2 \int_0^T  \vert\dot q_i\vert^2+\vert \dot q_{i+1}\vert^2,
			\end{equation*}
			an easy computation shows that 
			\[
				\frac{\alpha+2}{2}\int_0^T\sum\limits_{i=1}^n\vert\dot{q}_i\vert^2\ge\frac{\alpha+2}{8}\int_0^T\sum\limits_{i=1}^{n-1}\vert\dot{q}_i-\dot{q}_{i+1}\vert^2.
			\]
			This fact, combined with \eqref{eq:inequality_alphac}, gives the following inequality
			\begin{equation}
				\label{eq:inequality_qf}
				\frac{8\alpha \tilde{c}}{\alpha+2} \ge \int_0^T\sum_{i=1}^{n-1}\vert \dot{q}_i-\dot{q}_{i+1}\vert^2-\frac{16\sigma}{\alpha+2} \vert q_i-q_{i+1}\vert^2.
			\end{equation}
			To simplify notation, let us set $\bar{c} = 8\tilde{c}\alpha/(\alpha+2)$ and $\bar\sigma = 16 \sigma/(\alpha+2).$ Moreover let $j$ be such that 
			\[
			\int_0^T \vert q_{j}-q_{j+1} \vert^2 \ge \int_0^T \vert q_{i}-q_{i+1} \vert^2\quad \forall \, i=1,\ldots,n. 
			\]
			Let us define $v := q_{j+1}-q_j \ge0$. In particular we see that
			\begin{equation*}
				\int_0^T \sum_{i=1}^{n-1} \vert q_i-q_{i+1}\vert^2 \le (n-1)\int_0^T \vert v\vert^2, \quad \int_0^T \sum_{i=1}^{n-1} \vert \dot q_i-\dot q_{i+1}\vert^2 \ge \int_0^T \vert \dot v \vert^2.
			\end{equation*}
			We can thus rewrite inequality \eqref{eq:inequality_qf} for the function $v$ as follows:
			\begin{equation*}
				\bar{c} \ge \int_0^T\vert \dot{v}\vert^2-\bar \sigma(n-1)\int_0^T\vert v\vert^2.
			\end{equation*}
			By means of the fundamental theorem of calculus, we infer that, for any $t_1,t_2\in[0,T]$
			\begin{equation*}
				\vert v(t_1)-v(t_2)\vert^2 =\left\vert \int_{t_1}^{t_2} \dot{v}\,\, \right\vert^2\le T  \int_0^T \vert\dot{v}\vert^2 \le T\left(\bar c+\bar \sigma(n-1) \int_0^T\vert v\vert^2 \right).
			\end{equation*}
			Recall that $v$ is continuous. The mean value theorem guarantees the existence of $t^*\in[0,T]$ such that 
			\[
			\int_0^T\vert v\vert^2 = T\vert v(t^*)\vert^2\ge T \min_{t \in[0,T]}\vert v(t)\vert^2.
			\]
			Let us denote by $s^*$ a point in $[0,T]$ where the minimum in the right-hand side is achieved. Combining the previous fact with the last inequality we obtain that:
			\[
			\begin{aligned}
				\sqrt{T} \left(\bar c+\bar \sigma (n-1) \int_0^T\vert v\vert^2 \right)^{\frac12}&\ge\vert v(t^*)- v(s^*)\vert \\
				&= v(t^*)- v(s^*) \\ 
				&= \frac{1}{\sqrt{T}}\left(\int_0^T\vert v\vert^2 \right)^{\frac12}- \min_{t \in[0,T]}\vert v(t)\vert
			\end{aligned} 
			\]
			By standard computations, it follows that:
			\begin{align}            \label{eq:inequality_minimum_proof_lemma2}
            \nonumber
				\min_{t \in[0,T]}\vert v(t)\vert &\ge \frac{1}{\sqrt{T}}\left(\int_0^T\vert v\vert^2 \right)^{\frac12}- \sqrt{T} \left(\bar c+\bar \sigma (n-1) \int_0^T\vert v\vert^2 \right)^{\frac12} \\
				&= \frac{1}{\sqrt{T}}\left(1-T \sqrt{n-1}\sqrt{\bar \sigma  +\frac{\bar c}{(n-1)\Vert v \Vert_2^2}}\right)\Vert v \Vert_2 \nonumber\\
                &\ge  \frac{1}{\sqrt{T}}\left(1-T \sqrt{n-1}\sqrt{\bar \sigma  +\frac{\bar c}{R}}\right)\Vert v \Vert_2
			\end{align}
			
             since $\Vert v\Vert_2^2\ge R/(n-1)$.
            Let us write
            \[
                q_i = q_1+\sum_{k=1}^{i-1}q_{k+1}-q_k
             \]  
  which implies that 
  \[
    q_i\le  \vert q_i\vert \le \sqrt{n} \left(\sum_{k=1}^{n-1} (q_{k+1}-q_{k})^2+q_1^2\right)^{\frac{1}{2}}.
  \]
  Integrating over $[0,T]$ and applying H\"older inequality we obtain the following estimate for the mean value of $ q_i$.
  \[
     \frac1T\int_0^T q_i\le  \frac1T\int_0^T \vert q_i\vert \le\sqrt{\frac{n}{T}}\left(\sum_{k=1}^{n-1}\int_0^T (q_{k+1}-q_{k})^2+\int_0^Tq_1^2\right)^\frac12.
  \]
  
  Moreover we have a Poincaré inequality on $q_1$ yielding
  \[
  \int_0^Tq_1^2  = \int_0^Tt^2\left(\frac{1}{t} \int_0^t \dot{q}_1\right)^2\le \int_0^Tt\left(\int_0^t\dot{q}_1^2\right)\le \frac{T^2}{2} \int_0^T\dot{q}_1^2.
  \]
  On the other hand, we can apply the same argument involving the mean value theorem as above to show that $\exists t^*\in[0,T]$ such that for all $t \in [0,T]$
  \[
  \vert q_i(t)-q_i(t^*)\vert =
  \left\vert q_i(t)-\frac{1}{T}\int_0^Tq_i\right\vert \le \sqrt{T} \left(\int_0^T \dot{q}_i^2\right)^\frac{1}{2}
  \]
  which, combining the previous inequalities together with the fact that \[
  \sqrt{a+b}\le \sqrt{a}+\sqrt{b} \le \sqrt{2}\sqrt{a+b}\]
  and $n\ge2$ implies that
  \begin{align*}
   \vert q_i \vert &\le \sqrt{T} \left(\int_0^T \dot{q}_i^2\right)^\frac{1}{2}+\frac{1}{T}\int_0^T \vert q_i \vert \\&\le  \sqrt{\frac{n}{T}}\left(\sum_{k=1}^{n-1}\int_0^T (q_{k+1}-q_{k})^2+ \frac{T^2}{2} \int_0^T\dot{q}_1^2\right)^\frac12+\sqrt{T}\left(\int_0^T \dot{q}_i^2\right)^\frac{1}{2}\\
   &\le  \sqrt{\frac{n}{T}}\left(\sum_{k=1}^{n-1}\int_0^T (q_{k+1}-q_{k})^2\right)^{\frac12}+\sqrt{ \frac{nT}{2}} \left( \int_0^T\dot{q}_1^2\right)^\frac12+\sqrt{T}\left(\int_0^T \dot{q}_i^2\right)^\frac{1}{2}\\
   &\le  \sqrt{\frac{n}{T}}\left(\sum_{k=1}^{n-1}\int_0^T (q_{k+1}-q_{k})^2\right)^\frac12+\sqrt
   {n T}\left(\int_0^T\dot{q}_1^2+\int_0^T \dot{q}_i^2\right)^\frac{1}{2}\\
  \end{align*}
  Recall that $R = \sum_{k=1}^{n-1}\int_0^T (q_{k+1}-q_{k})^2$. Thanks to the homogeneity condition  in equation \eqref{eq:inequality_alphac} we obtain
  \begin{align*}
       \vert q_i \vert &\le  \sqrt{\frac{n}{T}}\sqrt{R}+\sqrt
  {n T}\left(\frac{2 \alpha \tilde c}{\alpha+2} + \frac{4\sigma}{\alpha +2} R\right)^\frac{1}{2}\\
  &=\sqrt{ nR}\left(\sqrt{\frac{1}{T}}+\sqrt
  {T}\left(\frac{2 \alpha \tilde c}{(\alpha+2)R} + \frac{4\sigma}{\alpha +2} \right)^\frac{1}{2}\right)\\
  & = \sqrt{ \frac{nR}{T}}\left(1+\frac{T}{2}\left(\frac{\bar c}{R} + \bar\sigma \right)^\frac{1}{2}\right).
  \end{align*}
 Combining this inequality with \eqref{eq:inequality_minimum_proof_lemma2} we obtain that
 \[
   \min_{t \in[0,T]} \vert v(t)\vert \ge \frac{1}{n}\left( \frac{1-T\sqrt{n-1}\sqrt{\bar\sigma+ \frac{\bar c}{R}}}{1+\frac{T}{2} \sqrt{\bar\sigma+ \frac{\bar c}{R}}}\right) q_i
 \]
 Let us observe that the function $h(x)= \frac{1}{n}\frac{1-x}{1+x/(2\sqrt{n-1})}$ is monotone decreasing, negative at infinity and has value $1/n$ for $x=0$. Thus, there exists $x_0 = x_0(n)>0$ such that for any $x \in [0,x_0]$, $h(x)\ge 1/2n$.
It follows that, for 
\[
\bar \sigma \le \frac{x_0^2}{2T^2(n-1)}, \quad R_0= \frac{2T^2\bar c(n-1)}{x_0^2}
\]
and for all $i\le n$, we have
\[
q_{j+1}-q_j \ge \frac{1}{2 n}  \vert q_i \vert , \quad  \min_{t \in [0,T]} q_{j+1}-q_j \ge C(T, \alpha, n) \sqrt{R},
\]
for some positive constant $C(T,\alpha,n)$ depending on $T,\alpha$ and $n$.

    \end{proof}
	\end{lemma}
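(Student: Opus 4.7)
My plan is to handle the algebraic reduction in (i) directly, and to prove (ii) by pairing the Lagrange-multiplier identity with $q$ itself and exploiting the Ambrosetti--Rabinowitz condition \eqref{assumption:homogenuity}. For part (i), since $\mathcal{G}_{\lambda,c} = \mathcal{G} + \lambda(c - \mathcal{A})$, one has $\nabla \mathcal{G}_{\lambda,c} = \nabla \mathcal{G} - \lambda \nabla \mathcal{A}$, so the relation $\nabla \mathcal{A} = \sigma \nabla \mathcal{G}_{\lambda,c}$ rearranges algebraically to $\nabla \mathcal{A} = \frac{\sigma}{1+\sigma \lambda}\nabla \mathcal{G}$; since $\sigma > 0$, the coefficient $\sigma'$ lies in $(0, 1/\lambda)$.

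For part (ii), I would first evaluate $\langle\nabla \mathcal{A}(q), q\rangle = \sigma \langle \nabla \mathcal{G}(q), q\rangle$. Using \eqref{assumption:homogenuity} in the forms $f'_i(q_i)q_i \ge -\alpha f_i(q_i)$ and $-g'_{ij}(q_i-q_j)(q_i-q_j) \ge \alpha g_{ij}(q_i-q_j)$, combined with the bound $\mathcal{A}(q) \le \tilde c$, the left-hand side is bounded below by $\frac{\alpha+2}{2}\int_0^T \sum_i |\dot q_i|^2 - \alpha \tilde c$. A direct computation bounds $\langle \nabla \mathcal{G}(q), q\rangle$ above by $2 \int_0^T \sum_i (q_{i+1} - q_i)^2$ (the strong-force contribution $-2/(q_{i+1}-q_i)^2$ is non-positive and is discarded). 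Invoking the elementary inequality $\sum_i \int |\dot q_i - \dot q_{i+1}|^2 \le 4\int \sum_i|\dot q_i|^2$ and choosing $j$ to maximize $\int (q_i - q_{i+1})^2$, this collapses, for $v := q_{j+1} - q_j \ge 0$, to
$$\int_0^T \dot v^2 \le \bar\sigma (n-1) \int_0^T v^2 + \bar c,$$
with explicit constants $\bar\sigma = 16\sigma/(\alpha+2)$ and $\bar c = 8\alpha\tilde c/(\alpha+2)$.

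The remaining analysis is purely one-dimensional on $v$. Via the mean value theorem I can pick $t^* \in [0,T]$ with $|v(t^*)|^2 \ge \frac{1}{T}\int v^2$; comparing $v(t^*)$ with the minimum $v(s^*)$ through the fundamental theorem of calculus and Cauchy--Schwarz yields
$$\min_{[0,T]} v \ge \frac{\|v\|_2}{\sqrt T}\left(1 - T\sqrt{(n-1)(\bar\sigma + \bar c/R)}\right),$$
using also $\|v\|_2^2 \ge R/(n-1)$. Choosing $\sigma_0$ and $R_0$ so that the factor in parentheses stays bounded away from zero delivers the first claim $\min v^2 \ge C R/(n-1)$. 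For the uniform bound $|q_i| \le 2n \min v$, I would telescope $q_i = q_1 + \sum_{k<i} v_k$, apply Cauchy--Schwarz to dominate $|q_i|$ by a combination of $\|v_k\|_2$ and $\|q_1\|_2$, invoke the Poincar\'e inequality $\|q_1\|_2^2 \le \frac{T^2}{2}\|\dot q_1\|_2^2$ (valid because $q_1(0)=0$), and substitute the kinetic-energy bound obtained above. The main obstacle will be bookkeeping all constants so that the final geometric ratio reaches exactly $1/(2n)$: this amounts to analyzing a monotone function $h$ of $\sqrt{\bar\sigma + \bar c/R}$ which equals $1/n$ at zero and is decreasing, and restricting the argument below an explicit threshold $x_0 = x_0(n)$ where $h \ge 1/(2n)$, thereby pinning down the admissible values of $\sigma_0$ and $R_0$.
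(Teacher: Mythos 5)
Your proposal follows the paper's proof essentially step by step: the algebraic reduction for (i), pairing the Lagrange-multiplier identity with $q$ and using (H2) to get the coercivity estimate, the reduction to the Rellich-type inequality $\int \dot v^2 \le \bar\sigma(n-1)\int v^2 + \bar c$ for $v = q_{j+1}-q_j$ with $j$ the maximizer, the mean-value/FTC argument for the lower bound on $\min v$, and the telescoping plus Poincar\'e plus oscillation estimate leading to the threshold function $h$ and the choice of $\sigma_0, R_0$. The route and the constants agree; this is the same proof.
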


	\begin{proof}[Proof of Proposition \ref{prop:no_solution_eigenvalue}]
		Without loss of generality we can assume that $b\gg R_0$ and choose  $\lambda$ in \eqref{eq:def_modified_G} so that $\lambda \ge \frac{1}{\sigma_0}$ (compare with the proof of Lemma \ref{lemma:proof_proportionality_gradients} for the value of these constants).
		
		First we prove $i)$ and we argue by contradiction. Let us assume that there exists indeed a solution $q$ of \eqref{eq:proportional_gradients_Glambda} satisfying $\mathcal{A}(q) \le \tilde{c}$ and $\Glambda(q)=b$. First, assume by contradiction that 
		\[
		\int_0^T\sum\limits_{i=1}^{n-1} \vert q_i-q_{i+1}\vert^2\ge R> R_0
		\] 
		for some $R$ (which will be uniquely determined along the proof). From $ii)$ in Lemma \ref{lemma:proof_proportionality_gradients}, there exists $j$ such that  $\min_{t \in[0,T]}\vert q_j(t)-q_{j+1}(t)\vert^2\ge \frac{C R}{(n-1)}$. 
		
		Let us define a vector $w_j\in\mathbb{R}^n$ such that
		\begin{equation}
			\label{eq:def_w_j}
			\begin{cases}
			(w_j)_i = 0 \text{ if } i\le j \\
			(w_j)_i = 1 \text{ if } i>j
			\end{cases}
		\end{equation}
		For $\ve>0$, consider the variation  $q+\ve w_j$. A straightforward computation implies that:
		\begin{equation*}
			\langle \nabla \mathcal{A}(q), w_j \rangle = \int_0^T \sum_{i=j+1}^{n} \left( f_i'(q_i)-\sum_{k=1}^jg_{ik}'(q_i-q_k) \right).
		\end{equation*}

        Thanks to point ii) of Lemma \ref{lemma:proof_proportionality_gradients} we know that for $k<j<i$ and $t \in [0,T]$
        \[
        \vert q_i (t) \vert \le 2 n \min_{t \in [0,T]} (q_{j+1}(t)-q_j(t)) \le 2 n (q_i(t)-q_k(t)).
        \]
        We have two possibilities. Either $q_i(t) \in (-\infty ,s_0]$ or $q_i(t)\in (s_0,\infty)$, here $s_0$ is the one appearing in \eqref{assumption:attraction_infinity_2}. In the first case we have
        \[
        f_i'(q_i)\le\max_{s \in (-\infty,s_0]}f_i'(s)<0
        \]
        thanks to \eqref{assumption:extra_assumptions_f}, whereas for any $k<j$
        \[
        -g_{ik}'(q_i-q_k)\le \max_{s\ge\sqrt{CR/(n-1)}} -g_{ik}'(s).
        \]
        Since $\lim_{s\to \infty} g_{ik}'(s) =0$, we can assume that $\bar{R}_0^{ik}$ is big enough so that whenever $R\ge \bar{R}_0^{ik}$ it holds
        \[ 
          \max_{s\ge\sqrt{CR_0^{ik}/(n-1)}} -g_{ik}'(s)<\min_{s \in (-\infty,s_0]}-f_i'(s), \text{ for all }k <i.
        \]
        In turn, this implies that
        \[
         f_i'(q_i)-\sum_{k=1}^jg_{ik}'(q_i-q_k)<0
         \]
         whenever $q_i\le s_0$. On the other hand, if $q_i>s_0$ we have 
         \[
           q_i-q_k\ge \frac{q_i}{2n} \text{ and } q_i> s_0
         \]
         and so \eqref{assumption:attraction_infinity_2} implies that 
          \[
         f_i'(q_i)-\sum_{k=1}^jg_{ik}'(q_i-q_k)<0
         \]
         is negative on $[0,T]$.
		
		Let us compute now $\langle \nabla \mathcal{G}(q), w_j\rangle$. A straightforward calculation shows:
		\[
		\langle \nabla \mathcal{G}(q), w_j \rangle = 2\int_0^T (q_{j+1}-q_j)-\frac{1}{(q_{j+1}-q_j)^3}.
		\]
		Again from Lemma \ref{lemma:proof_proportionality_gradients} we know that $q_{j+1}-q_j\ge\sqrt{C R}/\sqrt{n-1}$ and thus 
		\[
		-\frac{1}{(q_{j+1}-q_j)^3}\ge-\left(\frac{\sqrt{n-1}}{\sqrt{CR}}\right)^3.
		\]
		It follows that
		\[
		\langle \nabla \mathcal{G}(q), w_j \rangle \ge 2 T \frac{(CR)^2-(n-1)^2}{\sqrt{n-1}(CR)^{3/2}}>0
		\]
		provided that $R>(n-1)/C$. Thus, as long as $R> \bar{R}:=\max\{R_0,\frac{n-1}{C},{\bar R_0^{ik}}\}$, we have $ \sigma<0 $ and thus a contradiction arises.
		
		Therefore, we can assume that $\sum_{i=1}^{n-1}\int_0^T \vert q_i-q_{i+1}\vert^2<\bar R+1$; note that $\bar{R}$ does not depend on $b$. 
		Let us observe that in this case, \eqref{eq:inequality_alphac} gives a bound on the $L^2$ norm of the derivatives $\dot q_i$ which is $b-$independent. Indeed we easily have:
		\begin{equation*}
			C_1 = \frac{2\alpha\tilde{c}}{\alpha+2}+ \frac{4 \sigma{(\bar R+1)}}{\alpha+2} \ge \Vert \dot q\Vert_2^2.
		\end{equation*} 
		In turn, using the fundamental theorem of calculus, this implies that the first particle $q_1$ satisfies $q_1(t)\ge - \sqrt{TC_1}$ for all $t \in[0,T]$. 	
		Any solution $q$ of \eqref{eq:proportional_gradients_G}  solves the equation $\ddot{q}=\nabla U_\sigma(q)$, where $U_\sigma$ is the potential
		\[U_{\sigma}(q) = \sum_{i=1}^nf_i(q_i)-\sum_{1\le i<j\le n}g_{ij}(q_j-q_i)-\left(\sum_{i=1}^{n-1} \frac{\sigma}{(q_{i+1}-q_{i})^2}+\sigma (q_{i+1}-q_{i})^2\right),\]
        where $\sigma>0$ is the coefficient appearing in \eqref{eq:proportional_gradients_G}. 
		Hence, integrating the energy $h$ of  a solution of \eqref{eq:proportional_gradients_G} gives
		\begin{equation*}
			T h  = \frac12\Vert \dot{q}\Vert_2^2 - \int_0^T U_\sigma(q).
		\end{equation*}
		Moreover, thanks to assumption \eqref{assumption:homogenuity} we have the following inequality:
		\[
		\begin{aligned}
			0 &=\langle \nabla\mathcal{A}(q)-\sigma\nabla\mathcal{G}(q),q\rangle \\
			&=\int_0^T\sum\limits_{i=1}^n\vert\dot{q}_i\vert^2 + f_i'(q_i)q_i - \sum\limits_{j<i}g_{ij}'(q_i-q_j)(q_i-q_j) \\
			&\quad-2\sigma\int_0^T\sum\limits_{i=1}^{n-1}(q_i-q_{i+1})^2-\frac{1}{(q_i-q_{i+1})^2} \\
			&\ge\|\dot{q}\|_2^2 - \alpha\int_0^TU_\sigma(q) + \int_0^T\sum\limits_{i=1}^{n-1}\frac{\sigma(2-\alpha)}{(q_i-q_{i+1})^2}-(2+\alpha)\sigma(q_i-q_{i+1})^2 \\
			&\ge \Vert \dot{q}\Vert_2^2 -\alpha \int_0^T U_{\sigma}(q)-\sigma(2+\alpha) (\bar R+1)\\
			&=\frac{2-\alpha}{2} \Vert \dot{q}\Vert_2^2-\sigma(2+\alpha) (\bar R+1)+\alpha T h.
		\end{aligned}
		\]
		Therefore, the energy $h$ is bounded from above and in particular
		\[
			h\le \frac{\sigma(2+\alpha)(\bar{R}+1)}{\alpha T}+\frac{(\alpha-2)C_1}{2\alpha T}.
		\]
		We readily see that the contribution of the $f_i$ is bounded since
		\begin{equation}
		\label{eq:esitmate_going_wrong_limit}
		0\ge  -f_i(q_i)\ge-f_i(-\sqrt{TC_1}).
		\end{equation}
		However, since we are assuming that $\Glambda(q) = b$, we see that
		\[
			\bar{R}+1>\int_0^T\sum\limits_{i=1}^{n-1}(q_{i+1}-q_i)^2 = b-\int_0^T\sum\limits_{i=1}^{n-1}\frac{1}{(q_{i+1}-q_i)^2}
		\]
		and so, for at least one $j$, we have 
		\begin{equation*}
		\int_0^T \frac{1}{(q_{j+1}-q_j)^2}\ge \frac{b-(\bar R+1)}{n-1},
		\end{equation*}
		hence $q_{j+1}(t)-q_j(t)\le \sqrt{\frac{T(n-1)}{b-(\bar R+1)}}$ at some instant $t$. In particular, there is always an instant $t$ in which the value of $g_{j+1,j}(q_{j+1}-q_j)$ can be made arbitrarily large as $b$ grows. This follows directly from \eqref{assumption:homogenuity} which implies that $g_{ij}(s)\to +\infty$ as $s \to 0^+$.
		We finally obtain a contradiction observing that
		\begin{equation*}
			\frac{\sigma (\alpha+2)(\bar{R}+1)}{\alpha T}+\frac{C_1(\alpha-2)}{2\alpha T} \ge h \ge - \sum_{i=1}^nf_i\left(-\sqrt{TC_1}\right) + g_{j+1,j}\left (\sqrt{\frac{T(n-1)}{b-(\bar R+1)}}\right).
		\end{equation*}

		To prove $ii)$, let us observe that $\nabla \mathcal{G}_{\lambda,c}=0$ if and only if:
		\[
		\nabla\mathcal{A} = \frac{1}{\lambda} \nabla \mathcal{G} .
		\]
		Thanks to the previous point and Lemma \ref{lemma:proof_proportionality_gradients}, this is not possible whenever $\lambda \ge\frac{1}{\sigma_0}$.
	\end{proof}

\subsection{Palais-Smale conditions}
	In this section we prove two versions of Palais-Smale compactness relative to the sublevels of $\mathcal{G}_{\lambda,c}$.

	\begin{prop}
		The following assertions hold true:
		\begin{enumerate}[label = \roman*)]
			\item for any $c, b \in \mathbb{R}$, any sequence $(q^k)\subseteq \mathcal{H}$ having  
			\begin{equation}
				\label{eq:palais_condition_sublevels_g}
				\lim\limits_{k\to+\infty} \mathcal{A} (q^k) = c, \quad \limsup\limits_{k\to+\infty} \mathcal{G}_{\lambda,c}(q^k) \le b, \quad \lim\limits_{k\to+\infty}\nabla \mathcal{A}({q^k})=0
			\end{equation}
			admits a strongly convergent subsequence.
			\item For any $c,b \in \mathbb{R}$, any sequence $(q^k)\subseteq \mathcal{H}$ having  \begin{equation}
				\label{eq:palais_condition_constrained}
				\lim\limits_{k\to+\infty} \mathcal{A} (q^k) = c, \quad  \lim\limits_{k\to+\infty} \mathcal{G}_{\lambda,c}(q^k) = b, \quad \lim\limits_{k\to+\infty}\nabla \mathcal{A}({q^k})-\lambda_k \nabla \mathcal{G}_{\lambda,c}(q^k)=0
			\end{equation}
			for some sequence $(\lambda_k)$ with $\lambda_k \ge 0$, admits a strongly convergent subsequence.
		\end{enumerate}
		\begin{proof}
			We prove $i)$. Since $q_1^k(0)=0$ for any $k$, a Poincar\'e inequality holds for $q_1^k$, namely $\|q_1^k\|_2\le T\|\dot{q}_1^k\|_2/\sqrt{2}$. Moreover, by Cauchy-Schwartz and triangular inequality, we have that there exist $c_1,c_2 >0$ such that
			\[
				c_1\|q^k\|_2^2\le \|q_1^k\|_2^2 + \int_0^T\sum\limits_{i=1}^{n-1}(q_{i+1}^k-q_i^k)^2\le c_2\|q^k\|_2^2.
			\]
			By assumption, we also know that
			\[
				\limsup\limits_{k\to+\infty} \mathcal{G}_{\lambda,c}(q^k) = \int_0^T\sum\limits_{i=1}^{n-1}(q_{i+1}^k-q_i^k)^2 + \frac{1}{(q_{i+1}^k-q_i^k)^2} \le b
			\]
			and so we see that, for $k$ big enough and some constant $C>0$
			\[
				\|q^k\|_{H^1}^2 =\sum\limits_{i=1}^n(\|q_i^k\|_2^2+\|\dot{q}_i^k\|_2^2)\le (b+1)/c_1+C\|\dot{q}_1^k\|_2^2 + \sum\limits_{i=2}^n\|\dot{q}_i^k\|_2^2.
			\]
			Therefore, we only need to prove that $\|\dot{q}^k\|_2$ is bounded. By contradiction, let us assume that, up to sub-sequence, $\|\dot{q}^k\|_2 \to +\infty$. Thanks to assumption \eqref{assumption:homogenuity} (see also the proof of $ii)$, Lemma \ref{lemma:proof_proportionality_gradients}) we have that:
			\begin{equation*}                        
				0 = \lim_{k \to \infty} \frac{\langle\nabla \mathcal{A}(q^k),q^k\rangle}{\|\dot{q}^k\|_2} \ge \limsup_{k \to \infty} \frac{\alpha+2}{2}\| \dot{q}^k\|_2- \frac{c}{\| \dot{q}^k\|_2},
			\end{equation*}
			 which is a contradiction. Thus, $(\dot{q}^k)$ is bounded in $L^2$. 
			
			So $(q^k)$ is bounded in $H^1$ and admits a weakly convergent subsequence with its weak limit $q$. Moreover, again by assumption, note that $\int_0^T(q^k_i-q^k_{i+1})^{-2}<b$ for each $i$. This implies that
			\[
			\liminf_{k\to+\infty} \min_{ t \in[0, T],i}\vert q^k_{i+1}(t)-q^k_i(t) \vert>0.
			\]
			Testing $\nabla \mathcal A (q^k)$ against $q-q^k$ yields:
			\[
			\begin{aligned}
				\langle \nabla \mathcal{A}(q^k),q-q^k\rangle &= \langle \dot q-\dot q^k,\dot{q}^k\rangle +\int_0^T\sum_i f_i'(q^k_i)(q_i-q^k_i)\\
				&\quad-\sum_{j<i}g_{ij}'(q^k_i-q^k_j)\big(q_i-q_j-(q^k_i-q^k_j)\big). 
			\end{aligned}
			\]
			Let us observe that here $f_i'(q_i^k)$ and $g_{ij}'(q^k_i-q_j^k)$ are bounded functions. Indeed the $f_i'$ satisfy assumption \eqref{assumption:extra_assumptions_f} and the sequence $(q^k)$ is far from collision configurations. Since $q-q^k$ tends to zero uniformly, we conclude that $(q^k)$ converges strongly in $H^1$ since
			\[
			\lim\limits_{k\to+\infty} \langle \dot q-\dot q^k,\dot{q}^k\rangle = 0.
			\]
			
			The proof of $ii)$ follows the same lines. We recover again $H^1$ boundedness using assumption \eqref{assumption:homogenuity} and the bound on $\mathcal{G}_{\lambda,c}.$ Let us observe that
			\[
			\nabla\mathcal{A}({q^k})-\lambda_k \nabla \mathcal{G}_{\lambda,c}(q^k) =   (1+\lambda_k\lambda) \nabla \mathcal{A}({q^k}) -\lambda_k\nabla \mathcal{G}({q^k}).
			\]
			Thus, considering again the weak limit $q$ as in the previous point, we know by assumption that
			\[
				\langle\nabla\mathcal{A}(q^k),q-q^k\rangle = \frac{\lambda_k}{1+\lambda\lambda_k}\langle\nabla\mathcal{G}(q^k), q-q^k\rangle + o(1),
			\]
			but also
			\begin{align*}
				\langle \nabla \mathcal A(q^k) , q-q^k\rangle 
				= 	&\langle \dot {q}^k , \dot{q}^k-\dot{q}\rangle + \int_0^T\sum_i f_i'(q^k_i)(q_i-q^k_i)+ \\ &-\sum_{j<i}g_{ij}'(q^k_i-q^k_j)\big(q_i-q_j-(q^k_i-q^k_j)\big).
			\end{align*}
			Thus, we deduce strong convergence following the same arguments as before.
		\end{proof}
	\end{prop}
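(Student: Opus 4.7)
The plan is a standard two-step Palais--Smale argument. First, I would establish $H^1$ boundedness of the sequence $(q^k)$ so that a weakly convergent subsequence can be extracted; then, I would upgrade weak to strong convergence by exploiting that the interaction terms $f_i'$ and $g_{ij}'$ remain uniformly bounded along $(q^k)$. Both parts $i)$ and $ii)$ proceed identically modulo the algebraic identity
\[
\nabla\mathcal{A}(q^k)-\lambda_k\nabla\mathcal{G}_{\lambda,c}(q^k)=(1+\lambda\lambda_k)\nabla\mathcal{A}(q^k)-\lambda_k\nabla\mathcal{G}(q^k),
\]
so that in $ii)$ the ``almost critical'' relation reduces to a controlled perturbation of the one in $i)$, with the coefficient $\lambda_k/(1+\lambda\lambda_k)$ bounded by $1/\lambda$.

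For the $H^1$ bound, the Dirichlet condition $q_1^k(0)=0$ yields a Poincar\'e inequality $\|q_1^k\|_2\lesssim\|\dot q_1^k\|_2$, while the assumed bound on $\mathcal{G}_{\lambda,c}(q^k)$ directly controls $\int_0^T|q_{i+1}^k-q_i^k|^2$ (the $\lambda(c-\mathcal{A}(q^k))$ term being absorbed since $\mathcal{A}(q^k)\to c$). Combining these reduces the problem to bounding $\|\dot q^k\|_2$. I would then test $\nabla\mathcal{A}(q^k)$ against $q^k$ itself and invoke \eqref{assumption:homogenuity} exactly as in the proof of Lemma \ref{lemma:proof_proportionality_gradients}, obtaining
\[
\langle\nabla\mathcal{A}(q^k),q^k\rangle\ge\frac{\alpha+2}{2}\|\dot q^k\|_2^2-\alpha\,\mathcal{A}(q^k).
\]
Since $\mathcal{A}(q^k)$ is bounded and $\nabla\mathcal{A}(q^k)=o(1)$ in the dual, dividing by $\|\dot q^k\|_2$ and letting $k\to\infty$ prevents $\|\dot q^k\|_2$ from diverging. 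In case $ii)$ the same test produces an analogous inequality once one absorbs $\langle\nabla\mathcal{G}(q^k),q^k\rangle$ against the $\mathcal{G}$-level.

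Having extracted $q^k\rightharpoonup q$ weakly in $H^1$, Rellich's theorem gives $q^k\to q$ in $C^0$. The crucial observation is that the strong-force summand $\int_0^T(q_{i+1}^k-q_i^k)^{-2}\le b+o(1)$, combined with the uniform $H^{1/2}$ H\"older bound on $q_{i+1}^k-q_i^k$, forces (via a Gordon-type argument) a uniform lower bound $\min_{i,t}(q_{i+1}^k(t)-q_i^k(t))\ge\delta>0$, independent of $k$. Consequently both $f_i'(q_i^k)$ (bounded globally by \eqref{assumption:extra_assumptions_f}) and $g_{ij}'(q_j^k-q_i^k)$ (bounded by the separation estimate) are uniformly $L^\infty$-controlled. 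Testing $\nabla\mathcal{A}(q^k)$ against $q-q^k\to 0$ uniformly then kills all potential contributions in the limit, leaving $\langle\dot q^k,\dot q-\dot q^k\rangle\to 0$; together with weak convergence this yields $\|\dot q^k\|_2\to\|\dot q\|_2$, hence strong $H^1$ convergence. Case $ii)$ is handled identically after absorbing the extra term $\frac{\lambda_k}{1+\lambda\lambda_k}\langle\nabla\mathcal{G}(q^k),q-q^k\rangle$, which again vanishes by the same uniform bounds.

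The main obstacle I anticipate is precisely the uniform no-collision estimate for the sequence: without it, the derivatives $g_{ij}'(q_j^k-q_i^k)$ could blow up and destroy the strong convergence argument. This is, however, exactly the role of the strong-force term $1/(q_{i+1}-q_i)^2$ engineered into the definition of $\mathcal{G}$, so the compactness relative to sublevels of $\mathcal{G}_{\lambda,c}$ is a feature built into the choice of auxiliary functional. Once the uniform separation is secured, the rest of the argument is routine.
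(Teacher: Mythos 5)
Your proposal is correct and follows essentially the same route as the paper's proof: Poincar\'e on $q_1^k$, the $\mathcal{G}_{\lambda,c}$-bound to control the relative distances in $L^2$, the Ambrosetti--Rabinowitz--type estimate $\langle\nabla\mathcal{A}(q^k),q^k\rangle \gtrsim \|\dot q^k\|_2^2$ to rule out blow-up of velocities, weak $H^1$ compactness plus Rellich, the strong-force term in $\mathcal{G}$ to secure a uniform lower bound on the mutual separations, and finally testing $\nabla\mathcal{A}(q^k)$ against $q-q^k$ to upgrade weak to strong convergence, with $ii)$ reduced to $i)$ via the algebraic identity $\nabla\mathcal{A}-\lambda_k\nabla\mathcal{G}_{\lambda,c}=(1+\lambda\lambda_k)\nabla\mathcal{A}-\lambda_k\nabla\mathcal{G}$. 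If anything, you are slightly more explicit than the paper on the uniform separation step, correctly noting that the bound $\int_0^T (q_{i+1}^k-q_i^k)^{-2}\le b$ alone does not suffice and must be paired with the $1/2$-H\"older modulus inherited from the $H^1$ bound.
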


\subsection{Assumption $vi)$ of Lemma \ref{lemma:critical_point}}\label{sec:boundary_disk}
	The aim of this section is to show that not all sublevels of the functional $\mathcal{A}$ below some threshold $\tilde{c}\in\mathbb{R}$ can be deformed into each other relatively to the set $\{\mathcal{G}_{\lambda,\tilde{c}}\ge b\}$. Let us start with some preliminary definitions. 

	\begin{defn}
    For any two closed subset $A,B \subseteq \mathcal{H}$ we say that $A$ is deformable into $B$ if there exists a continuous map $h:[0,1]\times \mathcal{H} \to \mathcal{H}$ such that
		\begin{enumerate}[label = \roman*)]
			\item $h(0,a) = a, $ for all $a \in A$,
			\item $h(1,A)\subseteq B$.
            \item $h(t,b) = b$ for all $b \in B$ and $t \in [0,1]$.
		\end{enumerate}
	\end{defn}
	\noindent Let $\tilde c >c \in \mathbb{R}$ and consider the sets:
	\begin{equation}
		\label{eq:def_Acb}
		A_{c,b} := \{\mathcal{A} \le c\}\cup \left(\{\mathcal{A}\le \tilde c\}\cap\{\mathcal{G}_{\lambda,\tilde{c}}\ge b\}\right).
	\end{equation}
	Moreover, let us define the following value:
	\begin{equation}
		\label{eq:min_max_value}
		c^*= \inf_{c \le\tilde c } \{A_{\tilde c,b} \text{ is deformable into } A_{c,b}\}.
	\end{equation}
	The rest of the section is devoted to proving that $c^* >-\infty$. This indeed guarantees that assumption $vi)$ of Lemma \ref{lemma:critical_point} holds (see Proposition \ref{prop:non_deformability} below). The strategy of the proof is to construct an embedding of a $(n-2)-$dimensional sphere in the set $\{\mathcal{G}_{\lambda,\tilde{c}}\ge b\}$ and show that any capping with a disk must have a point on which  $\mathcal{G}_{\lambda,\tilde c}$ is bounded from above and  $\mathcal{A}$ from below by a constant depending only on $\tilde c, T$ and $n.$

	We start constructing the desired embedding. Let us consider the following minimization problem. Let us introduce the following functional
	\begin{equation}		\label{eq:minimization_problem_kepler}
		\mathcal{F}(y) = \int_0^T \frac12 \sum_{i=0}^{n-1}\vert \dot y_i \vert^2+f_{i+1}(y_i),
	\end{equation} 
	on the set \[
	\mathcal{C} := \{ y \in H^1([0,T],\mathbb R^n) :y_0(0)=0,\, y_{2k}(T) = y_{2k+1}(0), \, y_{2k+1}(T) = y_{2k}(0)\}
	\]
	for $0\le k\le\lfloor (n-1)/2 \rfloor$. Note that, any $n$ potentials $f_1,\dots,f_n$ satisfying assumptions \eqref{assumption:values_f_g}-\eqref{assumption:convexity} (but not necessarily \eqref{assumption:extra_assumptions_f}) can be extended to functions on $\mathbb{R}$ with values in $\mathbb{R}\cup \{+\infty\}$ and can be used to defined the functional in \eqref{eq:minimization_problem_kepler}. Indeed, setting, for $s<0$
	\[
	f_j(s) = \begin{cases}
		\lim_{s\to 0^+}f_j(s) \quad &\text{ if } \lim_{s\to 0^+}f_j(s)<\infty\\
		+\infty &\text{ otherwise}
	\end{cases}
	\]
	we obtain the extensions which are in particular monotone non-increasing.
	By standard arguments, $\mathcal{F}$ is coercive on $\mathcal{C}$ and admits a minimum  Let $x = (x_0,x_1,\dots,x_{n-1}) \in\mathcal{C}$ be a minimizer and let us set
	\begin{equation}\label{eq:c_0}
		c_0 = \min_{y \in \mathcal{C}} \mathcal{F}(y).
	\end{equation}

	\begin{lemma}
		\label{lemma:minimizers_monotone}
     Any minimizer $x$ of $\mathcal{F}$ in $\mathcal{C}$ satisfies the following 
     \begin{enumerate}[label=\roman*)]
     	\item $x_j(t)\ge0$ for all $t\in [0,T]$ and for all $j=1, \dots n$,
     	\item $x_i$ is monotone increasing if $i$ is even and decreasing if $i$ is odd.
     \end{enumerate} 
			\begin{proof}
				Let us observe that reversing time when $i$ is odd, i.e. considering 
				\[
				z_i(t) = \begin{cases}
					x_i(t) &\text{ if } i \text{ is even},\\
					x_i(T-t) &\text{ if } i \text{ is odd}					
				\end{cases}
				\]
				gives a minimizer of $\mathcal{F}$ on the set
				\[
				\mathcal{C}' := \{ y \in H^1([0,T],\mathbb R^n) :y_0(0)=0,\, y_{k}(T) = y_{k+1}(0), k = 0 \dots, n-1\}
				\]
				and viceversa. Clearly, if $z_j(0) = a$ then $z_j(t)\ge a$ for all $t\in [0,T]$ since otherwise, replacing $z_j$  and $z_i$ for $i>j$ with
				\[
				\tilde{z}_j(t) = \max\{a,z_j(t)\}, \quad \tilde{z}_i(t) = z_i(t) + \tilde{z}_j(T)-z_j(T) 
				\] 
				we can reduce the value of $\mathcal{F}$, since all the $f_j$ are decreasing and  point $i)$ follows. 
				
				Point $ii)$ is equivalent to showing that the $z_j$ are monotone increasing. Let us observe that the competitors 
				\[
				z_j(t,s) = \begin{cases}
					z_j(t) &\text{ if } t\le s\\
					\max\{z_j(s), z_j(t)\} &\text{ if }t\ge s
				\end{cases}, \quad z_i(t,s) = z_i(t) + z_j(T,s)-z_j(T) 
				\]
				imply that the $z_j$ are non decreasing. In particular, whenever they satisfy the Euler-Lagrange equation, which is either on  $[0,T]$ if $f_j(0^+)<\infty$ is finite or on a full measure open set otherwise, they are concave and strictly increasing. This in particular implies that they are always strictly positive except for $j=0$ and $t=0$.
			\end{proof}
	\end{lemma}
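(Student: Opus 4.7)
The plan is to first reduce to a continuous-curve problem by a time-reversal, then use monotone rearrangement/truncation arguments exploiting the fact that each $f_j$ is non-increasing (and extended to $+\infty$ or the boundary value on $(-\infty,0)$).

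First I would define $z_i(t)=x_i(t)$ if $i$ is even and $z_i(t)=x_i(T-t)$ if $i$ is odd. A direct check shows $\mathcal{F}(z)=\mathcal{F}(x)$, and the boundary conditions defining $\mathcal{C}$ transform into the "head-to-tail" gluing $z_k(T)=z_{k+1}(0)$ for $k=0,\dots,n-2$ together with $z_0(0)=0$; that is, $z\in\mathcal{C}'$. Since this is a bijection between minimizers of $\mathcal F$ on $\mathcal C$ and on $\mathcal C'$, properties (i)--(ii) for $x$ translate to the single statement: every minimizer $z$ of $\mathcal F$ on $\mathcal C'$ is componentwise non-negative and each $z_j$ is monotone non-decreasing.

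For non-negativity, I would argue by minimality plus a shift. Given a minimizer $z$, set $a_j:=\min_{t\in[0,T]}z_j(t)$ and suppose toward contradiction that $a_{j}<0$ for some minimal such $j$ (note $a_0\ge z_0(0)=0$ would follow once monotonicity is established, but for the moment one proceeds index by index). Define $\tilde z_j(t)=\max\{a_j^+,z_j(t)\}$ where $a_j^+=\max\{a_j,0\}=0$, and for $i>j$ set $\tilde z_i(t)=z_i(t)+(\tilde z_j(T)-z_j(T))$; this preserves the gluing conditions $\tilde z_k(T)=\tilde z_{k+1}(0)$ and leaves $\tilde z_0(0)=0$ intact. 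Since $f_{j+1}$ is non-increasing on $\mathbb R$ (with the convention that extends it to $+\infty$ on negatives when needed), one has $f_{j+1}(\tilde z_j)\le f_{j+1}(z_j)$ pointwise. The kinetic term does not increase under truncation, and for the shifted $\tilde z_i$ ($i>j$) the kinetic terms are unchanged while $f_{i+1}(\tilde z_i)\le f_{i+1}(z_i)$ because the shift moves $z_i$ upward. Thus $\mathcal F(\tilde z)\le\mathcal F(z)$ with strict inequality unless the truncation was vacuous, giving (i).

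For monotonicity, I would use essentially the same trick applied to "running maxima". Fix $s\in(0,T)$ and define
\begin{equation*}
z_j(t,s)=\begin{cases} z_j(t), & t\le s\\ \max\{z_j(s),z_j(t)\}, & t\ge s\end{cases},\qquad z_i(t,s)=z_i(t)+\bigl(z_j(T,s)-z_j(T)\bigr)\ \text{for } i>j,
\end{equation*}
leaving the other coordinates unchanged. The same computation as above shows $\mathcal F$ does not increase, so by minimality $z_j(t,s)=z_j(t)$ for a.e. $s$, which forces $z_j$ to be non-decreasing. Finally, on the open set where $z_j>0$ (equivalently, where the Euler--Lagrange equation $\ddot z_j=-f'_{j+1}(z_j)\ge 0$ holds classically by \eqref{assumption:values_f_g}), $z_j$ is convex; combined with monotonicity and the absence of interior local maxima, this upgrades to strict monotonicity. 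Translating back to $x$ yields (ii). The delicate step I expect to be the main obstacle is the Euler--Lagrange / regularity discussion near the boundary of $\{z_j>0\}$ when $f_{j+1}(0^+)=+\infty$: one must check that the truncation argument still produces an admissible $H^1$ competitor and that strict monotonicity follows on the full interval, which is why the author carefully distinguishes the cases $f_j(0^+)<\infty$ versus $=\infty$ (in the latter case, strict monotonicity and positivity hold only on a full-measure open set, which still suffices for the applications in the sequel).
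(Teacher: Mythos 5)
The proposal follows the same route as the paper: time-reverse the odd components to land on $\mathcal{C}'$, use a ``truncation plus shift'' competitor for non-negativity, a ``running-max plus shift'' competitor for monotonicity, and the Euler--Lagrange equation for strict monotonicity.

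However, you have a sign error in the last step that propagates into an incorrect qualitative conclusion. The Lagrangian is $\tfrac12|\dot y_j|^2 + f_{j+1}(y_j)$, whose Euler--Lagrange equation is $\ddot z_j = f'_{j+1}(z_j)$, \emph{not} $\ddot z_j = -f'_{j+1}(z_j)$. Since $f'_{j+1}<0$ by \eqref{assumption:values_f_g}, the minimizer is strictly \emph{concave} wherever the equation holds, not convex; the paper states this correctly. This matters for the upgrade from non-decreasing to strictly increasing. With concavity, $\dot z_j$ is (weakly) decreasing; if $\dot z_j(t_0)=0$ at an interior $t_0$, then $\dot z_j\le 0$ afterwards, which together with $\dot z_j\ge 0$ (from the running-max step) forces $\dot z_j\equiv0$ past $t_0$ and contradicts strict concavity. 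With your claimed convexity, $\dot z_j$ would be non-decreasing, and ``absence of interior local maxima'' (which is automatic for a monotone function) does not rule out an initial flat stretch; the argument as written does not close. Worse, on the glued curve $\eta$ on $[0,nT]$ the natural boundary condition gives $\dot\eta(nT)=0$; a convex $\eta$ with $\dot\eta$ non-decreasing and $\dot\eta(nT)=0$ would force $\dot\eta\le 0$, contradicting monotone non-decreasing unless $\eta$ is constant. So the sign needs to be corrected, after which the rest of your argument goes through exactly as in the paper.
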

	
	Let us consider the following open set of $\mathbb{R}^{n-1}$ defined as:
	\[
		S=\{s=(s_1,\dots,s_{n-1}) \in \mathbb{R}^{n-1}:\,0<s_1<\dots<s_{n-1}\}
	\] 
	and let us define the following maps:
	\begin{align}
		\Theta &: S \to \mathcal 
		H, \quad \Theta(s_1,s_2,\dots,s_{n-1}) = (x_0,x_1+s_1,\dots,x_{n-1}+s_{n-1}), \nonumber
		\\
		\nonumber 
		\Phi_i &: \mathcal{H} \to \mathbb{R}, \quad \Phi_i(q) = \min_{t \in [0,T]} q_{i+1}(t)-q_i(t),
		\\
		\label{eq:def_Phi}
		\Phi &: \mathcal{H} \to \mathbb{R}^{n-1}, \quad 
		\Phi(q) = \left(\log \Phi_1(q), \ldots, \log\Phi_{n-1}(q) \right).
	\end{align}
	If $r>0$ we denote by $B_r$ the ball of radius $r$ centred at $0$ in $\mathbb{R}^{n-1}$ and $B'_r = (\Phi \circ \Theta)^{-1}(B_r)\subset S$.
	We have the following Lemma.

	\begin{lemma}
		\label{lemma:construction_boundary}
		The following statements hold true.
		\begin{enumerate}[label=\roman*)]
		\item The map $\Phi \circ \Theta : S \to \mathbb{R}^{n-1}$ is a diffeomorphism and so $B'_r$ is diffeomorphic to a $(n-1)$-dimensional disk. 
		\item 	The map $\Phi \circ \Theta: \partial B'_r \to \partial B_r$ has degree $1$. 
		\item  We have that $ \sup_{s \in S} \mathcal{A}(\Theta(s))\le c_0$, where $c_0$ is defined in \eqref{eq:c_0}. Moreover
		\[
		\lim_{r\to +\infty} \min_{s \in \partial B'_r} \mathcal{G}(\Theta(s)) =\lim_{r\to +\infty} \min_{s \in \partial B'_r} \mathcal{G}_{\lambda,\tilde c}(\Theta(s))=+\infty.
		\]
		\end{enumerate}
		\noindent In other words, for any $\tilde c$, $b>0$ and $\lambda>0$, there exists $r_0>0$ such that, for any $r>r_0$,  $S_r:=\Theta(\partial B'_r)$ is topologically a $(n-2)-$dimensional sphere and is contained in the set $\{\mathcal{G}_{\lambda,\tilde c}>b\}$. 
		\begin{proof}
		To prove point $i)$ let us observe that $S$ is a cone and the change of coordinates
		\[
		v_1 = s_1, \quad v_i=s_i-s_{i-1},    
		\] 
		maps it to the standard one defined by $V = \{(v_1,\dots,v_{n-1}): v_i>0\}.$ 
		
		Using these coordinates we immediately see that:
		\begin{equation*}
		\Phi_1(\Theta(s)) = s_1+\min_{t\in[0,T]}x_1-x_0 = v_1.
		\end{equation*}
        Similarly, if $i>2$ and $q=\Theta(s)$, then  $q_{i+1}-q_i =  v_{i}+ x_{i}-x_{i-1}$ and thus $\Phi_i(\Theta(s)) = v_i$.
		Indeed, the minimum of the function $x_{i+1}(t)-x_i(t)$ is always zero and is attained  at $t=0$ if $i$ is odd and at $t=T$ if $i$ is even. This is a consequence of the  fact that the components $x_j$ of a minimizers $x$ are monotone increasing/decreasing depending on the parity of $i$, in particular they are folded $nT$-brake orbits and solve $\ddot{x}_i= f_{i+1}'(x_i)$ for any $i=0,\ldots, n-1$ (compare with Lemma \ref{lemma:minimizers_monotone} and Lemma \ref{lemma:action_level}).  Thus, the map $\Phi\circ \Theta$ is a diffeomorphism.
			
		Point $ii)$ follows immediately from point $i)$ since $\Phi\circ\Theta\vert _{B'}$ is an orientation preserving diffeomorphism.

		Let us prove $iii)$. Composing $\mathcal{A}$ and $\Theta$, setting $s_0 = 0$ and recalling the functional $\mathcal{F}$ introduced in \eqref{eq:minimization_problem_kepler}, we obtain:
		\begin{align*}
			\mathcal{A}(\Theta(s)) &= \int_0^T  \sum_{i=0}^{n-1} \frac12 \vert \dot{x}_i\vert^2 + f_{i+1}(x_i+s_i)-\sum_{ 0\le i<j\le n-1}g_{i+1,j+1}(s_{j}-s_{i}+ x_{j}-x_{i})\\
			&\le \int_0^T\sum_{i=0}^{n-1} \frac12 \vert \dot{x}_i\vert^2+f_{i+1}(s_i+x_i)< \mathcal{F}(x)=c_0,
		\end{align*}
		and thus $\mathcal{A}$ is bounded on the image of $\Theta.$
			
		For the second part, recall that $(\Phi\circ\Theta)(B_r') = B_r$. Therefore, if a point $s$ belongs to $\partial B'_r$, then there exists at least one index $j$ such that $\vert\log(s_{j+1}-s_j)\vert \ge \frac{r}{\sqrt{n-1}}$, where we set $s_0 =0$ (recall the definition of $\Phi$ above). Thus either 
		\[
			s_{j+1}-s_j\ge  e^{\frac{r}{\sqrt{n-1}}} \text{ or }  s_{j+1}-s_j\le  e^{-\frac{r}{\sqrt{n-1}}}.
		\]
		Note also that
		\[
		\begin{aligned}
			\mathcal{G}(\Theta(s)) &=  \int_0^T\sum\limits_{i=0}^{n-2}\left[(x_{i+1}+s_{i+1}-x_i-s_i)^2 + \frac{1}{(x_{i+1}+s_{i+1}-x_i-s_i)^2}\right] \\
			&\ge \int_0^T(s_{j+1}-s_j)^2+\frac{1}{(s_{j+1}-s_j+x_{j+1}-x_j)^2},
		\end{aligned}
		\]
        If $s_{j+1}-s_j\ge e^{\frac{r}{\sqrt{n-1}}}$, we easily see that $\mathcal{G}(\Theta(s))\ge   T e^{2\frac{r}{\sqrt{n-1}}}$. In the other case we observe that
        \begin{align*}
          \mathcal{G}(\Theta(s)) &\ge  \int_0^T\frac{1}{(s_{j+1}-s_j+x_{j+1}-x_j)^2} \\
          &\ge\frac{1}{\Vert\dot{x}_{j+1}-\dot{x}_j \Vert_2^2} \left(\int_0^T\frac{\dot{x}_{j+1}-\dot{x}_j}{x_{j+1}-x_j+s_{j+1}-s_j}\right)^2\\
          &=\frac{1}{\Vert\dot{x}_{j+1}-\dot{x}_j \Vert_2^2} \left( \log( x_{j+1}-x_j+s_{j+1}-s_j)\vert_0^{T}\right)^2.         
        \end{align*}
		Since $x_{j+1}-x_j$ vanishes at $0$ or $T$, we obtain that $\mathcal{G}(\Theta(s))\ge (C_1 r-C_2)^2$ for some positive constants independent of $r$.
		\end{proof}
	\end{lemma}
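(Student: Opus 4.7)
The three claims split cleanly: (i) and (ii) reduce to an explicit formula for $\Phi\circ\Theta$, while (iii) decomposes into an easy upper bound for $\mathcal{A}$ and a more delicate lower bound for $\mathcal{G}$ on the boundary sphere.

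For (i) and (ii), I would exploit the structure of the minimizer $x$ of $\mathcal{F}$ on $\mathcal{C}$. By Lemma \ref{lemma:minimizers_monotone} the components $x_i$ are non-negative and alternately monotone, and the matching conditions defining $\mathcal{C}$ force $x_i$ and $x_{i+1}$ to meet at one of the endpoints $t=0$ or $t=T$. Hence $x_{i+1}(t)-x_i(t)\ge 0$ on $[0,T]$ with minimum exactly $0$, and setting $s_0:=0$ I obtain $\Phi_i(\Theta(s))=s_i-s_{i-1}$. The change of coordinates $v_i:=s_i-s_{i-1}$ is a linear diffeomorphism from $S$ onto the positive orthant $V:=\{v\in\mathbb R^{n-1}:v_i>0\}$, and componentwise $\log$ then identifies $V$ with $\mathbb R^{n-1}$. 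This proves (i); since $\Phi\circ\Theta$ is orientation-preserving, its restriction $\partial B'_r\to\partial B_r$ has degree one, which is (ii).

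For the upper bound in (iii), I would expand
\[
\mathcal{A}(\Theta(s))=\int_0^T\sum_{i=0}^{n-1}\tfrac12|\dot x_i|^2+f_{i+1}(x_i+s_i)-\sum_{i<j}g_{i+1,j+1}(s_j-s_i+x_j-x_i),
\]
drop the non-negative repulsion terms, and use the monotonicity of $f_{i+1}$ together with $s_i>0$ and $x_i\ge 0$ to replace $f_{i+1}(x_i+s_i)$ by $f_{i+1}(x_i)$, obtaining $\mathcal{A}(\Theta(s))\le\mathcal{F}(x)=c_0$.

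The main work is the divergence of $\mathcal{G}$ on $\partial B'_r$. From (i), a point $s\in\partial B'_r$ satisfies $\sum_j(\log(s_{j+1}-s_j))^2=r^2$, so some index $j$ satisfies $|\log(s_{j+1}-s_j)|\ge r/\sqrt{n-1}$. In the regime $s_{j+1}-s_j\ge e^{r/\sqrt{n-1}}$ the quadratic piece of $\mathcal{G}$ alone is at least $T(s_{j+1}-s_j)^2$ and already blows up. The delicate case is $s_{j+1}-s_j\le e^{-r/\sqrt{n-1}}$, where I must extract divergence from $\int 1/(q_{j+1}-q_j)^2$. My plan is a Cauchy--Schwarz/logarithmic trick: setting $u:=x_{j+1}-x_j+s_{j+1}-s_j>0$,
\[
\int_0^T\frac{1}{u^2}\ge\frac{\bigl(\int_0^T\dot u/u\bigr)^2}{\|\dot u\|_2^2}=\frac{\bigl(\log u(T)-\log u(0)\bigr)^2}{\|\dot u\|_2^2}.
\]
Since $x_{j+1}-x_j$ vanishes at one endpoint, $u$ equals the tiny $s_{j+1}-s_j$ there while being $O(1)$ at the other, so the numerator is $\gtrsim (C_1 r-C_2)^2$; meanwhile $\|\dot u\|_2$ is controlled uniformly in $s$ by the fixed minimizer $x$. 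Thus $\mathcal{G}(\Theta(s))\to+\infty$ uniformly on $\partial B'_r$, and adding $\lambda(\tilde c-\mathcal A)$, bounded below thanks to the $\mathcal A$-estimate, preserves the divergence, giving the statement for $\mathcal{G}_{\lambda,\tilde c}$. The only step I anticipate requiring real care is this Cauchy--Schwarz manipulation in the small-difference case; the rest is a change of variables plus monotonicity.
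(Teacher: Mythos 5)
Your proposal follows the paper's proof essentially step for step: the same change of coordinates $v_i = s_i - s_{i-1}$ together with the observation that $\min(x_{i+1}-x_i)=0$ at an endpoint gives $\Phi_i(\Theta(s)) = v_i$; the same monotonicity argument bounds $\mathcal{A}\circ\Theta$ by $c_0$; and the same two-case analysis with the Cauchy--Schwarz/logarithmic inequality handles the divergence of $\mathcal{G}$ on $\partial B'_r$. The only small addition is that you make explicit why the divergence carries over to $\mathcal{G}_{\lambda,\tilde c}$ (since $\lambda(\tilde c - \mathcal{A}) \ge 0$ by the $\mathcal{A}$-bound), a point the paper leaves implicit.
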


	Recalling the notations of the previous Lemma, let us assume that $r>r_0$, consider the sphere $S_r$ and let $B_1$ be the $(n-1)-$dimensional ball of radius 1. We introduce the following set:
    \begin{equation}\label{def:gamma_zero}
	\begin{aligned}
		\Gamma_r =\{ \gamma\colon B_1\to \mathcal{H}\,\vert\,\gamma\ \text{is continuous and } \gamma\vert_{\partial B_1} = \gamma_0\},\\
		\text{ where }\gamma_0 : \partial B_1 \to \mathcal{H}, \quad \gamma_0(x) = \Theta \circ  \left(\Phi\circ \Theta \right)^{-1}(r x).
	\end{aligned} 
    \end{equation}

        \begin{figure}[t]
    \centering
	\resizebox{.65\textwidth}{!}{

\tikzset{every picture/.style={line width=0.75pt}} 

\begin{tikzpicture}[x=0.75pt,y=0.75pt,yscale=-1,xscale=1]

\draw [color={rgb, 255:red, 198; green, 192; blue, 192 }  ,draw opacity=1 ][line width=1.5]  [dash pattern={on 1.69pt off 2.76pt}]  (107.33,132.17) .. controls (186.33,129.15) and (241.33,133.88) .. (267.33,139.17) ;
\draw [color={rgb, 255:red, 47; green, 118; blue, 200 }  ,draw opacity=1 ]   (47.33,25.33) .. controls (88.33,42.33) and (101.33,66.33) .. (106.33,105.33) .. controls (111.33,144.33) and (106.33,180.42) .. (39.33,255.33) ;
\draw [color={rgb, 255:red, 47; green, 118; blue, 200 }  ,draw opacity=1 ]   (329.33,22.33) .. controls (276.33,49.33) and (260.33,62.25) .. (261.33,108.33) .. controls (262.33,154.42) and (306.33,203.42) .. (334.33,257.33) ;
\draw    (22.33,45.9) .. controls (74.67,42.8) and (153.33,64.9) .. (168.33,124.07) .. controls (183.33,183.23) and (107.33,231.68) .. (17.33,236.9) ;
\draw    (352.33,234.23) .. controls (290.33,233.23) and (166.33,225.95) .. (170.33,136.95) .. controls (174.33,47.95) and (315.33,38.95) .. (347.33,45.95) ;
\draw [color={rgb, 255:red, 185; green, 181; blue, 181 }  ,draw opacity=1 ]   (149,91) .. controls (140.33,113.95) and (145.66,173.18) .. (157.33,177.95) ;
\draw [color={rgb, 255:red, 155; green, 155; blue, 155 }  ,draw opacity=1 ]   (160,103) .. controls (151.33,125.95) and (152.66,159.18) .. (164.33,163.95) ;
\draw [color={rgb, 255:red, 155; green, 155; blue, 155 }  ,draw opacity=1 ]   (168.33,124.07) .. controls (161.33,134.95) and (163.33,146.95) .. (167.33,154.95) ;
\draw [color={rgb, 255:red, 155; green, 155; blue, 155 }  ,draw opacity=1 ]   (178.33,106.07) .. controls (171.33,116.95) and (177.33,169.95) .. (181.33,177.95) ;
\draw [color={rgb, 255:red, 200; green, 198; blue, 198 }  ,draw opacity=1 ]   (187.33,93.07) .. controls (180.33,103.95) and (186.33,175.95) .. (196.33,195.95) ;
\draw [color={rgb, 255:red, 214; green, 210; blue, 210 }  ,draw opacity=1 ]   (131,75) .. controls (122.33,97.95) and (128.66,185.18) .. (140.33,189.95) ;
\draw [color={rgb, 255:red, 217; green, 213; blue, 213 }  ,draw opacity=1 ]   (203.33,77.07) .. controls (196.33,87.95) and (202.33,191.95) .. (219.33,212.95) ;
\draw [color={rgb, 255:red, 47; green, 118; blue, 200 }  ,draw opacity=1 ]   (39.33,255.33) .. controls (9.33,289.42) and (294.33,287.33) .. (334.33,257.33) ;
\draw [color={rgb, 255:red, 47; green, 118; blue, 200 }  ,draw opacity=1 ]   (47.33,25.33) .. controls (87.33,52.42) and (289.33,52.33) .. (329.33,22.33) ;
\draw [color={rgb, 255:red, 47; green, 118; blue, 200 }  ,draw opacity=1 ]   (47.33,25.33) .. controls (35.33,18.55) and (76.33,20.55) .. (80.33,21.55) ;
\draw    (65.33,10.25) .. controls (86.33,27.25) and (136.33,63.57) .. (148.33,76.57) .. controls (160.33,89.57) and (165.21,98.94) .. (178.33,88.57) .. controls (191.46,78.19) and (249.33,34.57) .. (285.33,6.57) ;
\draw [color={rgb, 255:red, 198; green, 196; blue, 196 }  ,draw opacity=1 ]   (150.33,76.57) .. controls (162.33,84.18) and (178.33,85.18) .. (192.33,77.18) ;
\draw [color={rgb, 255:red, 155; green, 155; blue, 155 }  ,draw opacity=1 ]   (158.33,87.57) .. controls (168.33,89.42) and (166.33,90.42) .. (178.33,88.57) ;
\draw [color={rgb, 255:red, 231; green, 227; blue, 227 }  ,draw opacity=1 ]   (134.33,62.47) .. controls (146.33,70.08) and (173.33,79.47) .. (213.33,62.47) ;
\draw    (321.33,275.22) .. controls (284.33,271.22) and (200.56,213.75) .. (187.19,202.17) .. controls (173.81,190.58) and (167.92,181.81) .. (156.04,193.58) .. controls (144.15,205.35) and (108.33,242.57) .. (33.33,271.57) ;
\draw [color={rgb, 255:red, 198; green, 196; blue, 196 }  ,draw opacity=1 ]   (185.2,202.39) .. controls (172.42,196.16) and (156.41,196.94) .. (143.39,206.45) ;
\draw [color={rgb, 255:red, 155; green, 155; blue, 155 }  ,draw opacity=1 ]   (176.02,192.35) .. controls (165.88,191.62) and (167.76,190.41) .. (156.04,193.58) ;
\draw [color={rgb, 255:red, 231; green, 227; blue, 227 }  ,draw opacity=1 ]   (202.67,214.62) .. controls (189.89,208.39) and (162.02,202.07) .. (124.16,223.42) ;
\draw [color={rgb, 255:red, 47; green, 118; blue, 200 }  ,draw opacity=1 ]   (279.33,11.55) .. controls (304.33,11.55) and (335.33,10.55) .. (329.33,22.33) ;
\draw [color={rgb, 255:red, 47; green, 118; blue, 200 }  ,draw opacity=1 ] [dash pattern={on 0.84pt off 2.51pt}]  (80.33,21.55) .. controls (121.33,20.55) and (251.33,12.55) .. (279.33,11.55) ;
\draw [line width=1.5]    (107.33,132.17) .. controls (15.33,145.17) and (55.79,162.95) .. (118.33,175.07) .. controls (180.87,187.18) and (296.33,177.97) .. (307.33,172.07) .. controls (318.33,166.17) and (323.33,152.9) .. (267.33,139.17) ;
\draw [shift={(62.66,159.37)}, rotate = 210.83] [fill={rgb, 255:red, 0; green, 0; blue, 0 }  ][line width=0.08]  [draw opacity=0] (13.4,-6.43) -- (0,0) -- (13.4,6.44) -- (8.9,0) -- cycle    ;
\draw [shift={(220.57,180.71)}, rotate = 178.67] [fill={rgb, 255:red, 0; green, 0; blue, 0 }  ][line width=0.08]  [draw opacity=0] (13.4,-6.43) -- (0,0) -- (13.4,6.44) -- (8.9,0) -- cycle    ;
\draw [shift={(292.5,146.7)}, rotate = 22.09] [fill={rgb, 255:red, 0; green, 0; blue, 0 }  ][line width=0.08]  [draw opacity=0] (13.4,-6.43) -- (0,0) -- (13.4,6.44) -- (8.9,0) -- cycle    ;

\draw (219.72,233.71) node [anchor=north west][inner sep=0.75pt]  [rotate=-24.01]  {$\mathcal{A}  >\ c^{*}$};
\draw (188,20.4) node [anchor=north west][inner sep=0.75pt]    {$\mathcal{A}  >\ c^{*}$};
\draw (22.33,51.3) node [anchor=north west][inner sep=0.75pt]    {$\mathcal{A} =\ c^{*}$};
\draw (244.76,200.16) node [anchor=north west][inner sep=0.75pt]  [rotate=-12.52]  {$\mathcal{A} =\ c^{*}$};
\draw (111.33,258.53) node [anchor=north west][inner sep=0.75pt]  [color={rgb, 255:red, 74; green, 144; blue, 226 }  ,opacity=1 ]  {$\mathcal{G} =\ b$};
\draw (67,111.4) node [anchor=north west][inner sep=0.75pt]    {$\gamma _{0}$};

\end{tikzpicture}
}
	\caption{An illustration of the proof of assumption $vi)$ of Lemma \ref{lemma:critical_point}. Any capping of the $(n-2)-$dimensional sphere $\gamma_0$ defined in \eqref{def:gamma_zero} must intersect the region $\{\mathcal{G}\le b\}$ in a point in which the action is bounded from below.}
    \label{fig:tube}
    \end{figure}
    
	Let us observe that there is always an element in $\Gamma_r$ whose image is contained in $\{\mathcal{A} \le \tilde{c}\}$, as soon as $\tilde{c}\ge c_0$ given in point $iii)$ of Lemma \ref{lemma:construction_boundary}. This element is simply given by the map
	\[
	\gamma: B_1 \to \mathcal{H}, \quad \gamma(x) = \Theta \circ (\Phi \circ \Theta)^{-1}(r x).
	\]
	\begin{lemma}
    \label{lemma:bound_G_A_internal_point}
		Let $r_0>0$ be the one defined in Lemma \ref{lemma:construction_boundary}. There exist constants $C_1,C_2$ depending only on $T,$ $n$, $\lambda>0$ and $\tilde{c} \ge c_0$ such that, for any $r>r_0$ and any $\gamma \in \Gamma_r$  whose image is contained in $\{\mathcal{A}\le \tilde c\}$, there exists $x \in B_1$ satisfying:
		\begin{equation*}
			\mathcal{G}_{\lambda, \tilde c} (\gamma(x))\le C_1, \quad \mathcal{A}(\gamma(x)) \ge C_2.
		\end{equation*}
		\begin{proof}
		The function $\Phi \circ\gamma_0$ has degree 1 being a homothety. It follows that, for any $\gamma\in\Gamma_r$ there exists at least $x\in B_1$ such that $\Phi(\gamma(x)) =0.$ Setting $q=\gamma(x)$, this implies that $\log(\Phi_i(q))=0$ for any $i$ and thus:
		\begin{equation}
			\label{eq:minimum_distances}
			\min_{t\in[0,T]} q_{i+1}(t)-q_i(t) =1, \quad   \forall i=1,\dots, n-1.
		\end{equation}
		So, $q_{k}-q_i\ge1$ for all $k>i$ and thus we see that
		\begin{equation}
		    \tilde c \ge \mathcal{A} (q) \ge \frac12 \Vert \dot{q}\Vert_2^2-T \sum\limits_{i=1}^{n-1}\sum_{j=i+1}^n g_{ij}(1) \ge -T \sum\limits_{i=1}^{n-1}\sum_{j=i+1}^n g_{ij}(1).
		\end{equation} 
	    Thus $\Vert \dot{q}\Vert_2$ and $\mathcal{A}(q)$ are bounded. Let us show that $\mathcal{G}$ is bounded as well. Indeed, let us set  $v_i = q_{i+1}-q_i$ and pick $t_i$ such that $v_i(t_i)=1$.
        Since $\|\dot{q}\|_2$ is bounded,  we have that $\Vert \dot v_i\Vert_2 \le C$ for a constant $C$ depending only on $\tilde c$ and $T$, thus \[
        \int_0^T v_i^2(t) dt  = \int_0^T \left(v(t_i)+\int_{t_i}^T \dot v_i\right)^2 dt\le T(1+ \sqrt{T}C)^2
        \]
        by H\"older inequality. On the other hand we immediately see that 
	    \[
	        \int_0^T \frac{1}{v_i^2}\le T.
	    \]     
        Combining the previous observations with the definition \eqref{eq:def_G} we obtain that $\mathcal{G}$ is bounded at $q$ by a constant depending only on $\tilde 
        c, n$ and $T$.
    
       	Finally, let us recall that $\mathcal{G}_{\lambda, \tilde c}$, as defined in \eqref{eq:def_modified_G}, differs from $\mathcal{G}$ by a factor $\lambda(\tilde c-\mathcal{A})$.  It follows that:
    	\[
		\mathcal{G}_{\lambda,\tilde c}(q) \le \mathcal{G}(q)+\lambda \left(\tilde c+T \sum_{i<j} g_{ij}(1)\right). 
		\]
	\end{proof}
	\end{lemma}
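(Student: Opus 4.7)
The plan is to combine a Brouwer degree argument with a direct energy estimate. First, by Lemma \ref{lemma:construction_boundary} $(i)$ the map $\Phi\circ\Theta\colon S\to\mathbb{R}^{n-1}$ is a diffeomorphism, and by construction the boundary parametrization $\Phi\circ\gamma_0\colon\partial B_1\to\partial B_r$ is the homothety $x\mapsto rx$, hence a degree-$1$ map. For any capping $\gamma\in\Gamma_r$, $\Phi\circ\gamma\colon B_1\to\mathbb{R}^{n-1}$ is a continuous extension of this degree-one boundary map, so standard homotopy invariance of the Brouwer degree forces $\Phi\circ\gamma$ to cover every point of $B_r$; in particular there exists $x\in B_1$ with $\Phi(\gamma(x))=0$. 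Setting $q:=\gamma(x)$, this gives $\min_t(q_{i+1}(t)-q_i(t))=1$ for every $i$.

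Next I would use this uniform lower bound on the gaps together with the hypothesis $\mathcal{A}(q)\le\tilde c$ to control the kinetic energy. Since the gaps telescope, $q_j(t)-q_i(t)\ge j-i\ge 1$ for $j>i$; because each $g_{ij}$ is decreasing and each $f_i$ is positive, this yields
\[
\tilde c \;\ge\; \mathcal{A}(q) \;\ge\; \tfrac12\|\dot q\|_2^2 \,-\, T\sum_{i<j}g_{ij}(1).
\]
Dropping the kinetic term gives the sought lower bound $\mathcal{A}(q)\ge C_2:=-T\sum_{i<j}g_{ij}(1)$, while keeping it provides a uniform estimate on $\|\dot q\|_2$ in terms of $T,n,\tilde c$ and the values $g_{ij}(1)$.

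Finally I would translate these bounds into an upper bound on $\mathcal{G}_{\lambda,\tilde c}(q)=\mathcal{G}(q)+\lambda(\tilde c-\mathcal{A}(q))$. The correction $\lambda(\tilde c-\mathcal{A}(q))$ is already controlled by the lower bound on $\mathcal{A}(q)$ just obtained. The strong-force contribution $\int_0^T(q_{i+1}-q_i)^{-2}$ is at most $T$ since every gap is $\ge 1$. For the quadratic part, pick $t_i$ realizing $v_i(t_i)=1$, where $v_i:=q_{i+1}-q_i$, and estimate $v_i(t)\le 1+\sqrt T\,\|\dot v_i\|_2$ by H\"older, with $\|\dot v_i\|_2$ controlled by $\|\dot q\|_2$; integrating caps $\int_0^T v_i^2$ by a constant depending only on $T$, $n$ and $\tilde c$. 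I expect the main obstacle to be the topological step: one has to invoke degree theory cleanly in the non-compact manifold $\mathcal{H}$ and verify that the boundary condition on $\gamma$ really persists under the capping, without which the distinguished point with $\Phi(q)=0$ might not exist; once that is secured, every other estimate is a routine consequence of the constraint $\Phi_i(q)=1$ and the monotonicity of $g_{ij}$.
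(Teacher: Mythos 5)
Your argument is the same as the paper's, step for step: use the degree-$1$ property of $\Phi\circ\gamma_0$ to produce $x$ with $\Phi(\gamma(x))=0$, read off $\min_t(q_{i+1}-q_i)=1$, bound $\mathcal{A}(q)$ from below and $\|\dot q\|_2$ from above using monotonicity of $g_{ij}$, then bound both terms of $\mathcal{G}$ and finally the $\lambda(\tilde c-\mathcal{A})$ correction. The worry you raise at the end is moot: the boundary condition $\gamma\vert_{\partial B_1}=\gamma_0$ is built into the definition of $\Gamma_r$, and the degree argument lives entirely in the finite-dimensional target $\mathbb{R}^{n-1}$ (with $\Phi$ continuous on $\mathcal{H}$ by the Sobolev embedding $H^1\hookrightarrow C^0$), so the non-compactness of $\mathcal{H}$ plays no role.
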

	As announced at the beginning of this section, the following proposition finally provides the non-deformability property $vi)$ of Lemma \ref{lemma:critical_point}.
	\begin{prop}[Non-deformability]\label{prop:non_deformability}
		For any $\tilde c\ge c_0$, there exists $b(\tilde{c})$ such that for any $b>b(\tilde{c})$ the corresponding value $c^*$ defined in \eqref{eq:def_Acb}-\eqref{eq:min_max_value} satisfies $c^*>-\infty$. In other words, $A_{\tilde{c},b}$ and any $A_{c,b}$ with $c \in (c^*,\tilde c]$ cannot be deformed into $A_{c^*-\ve,b}$ for any $\ve>0$.
		\begin{proof}
		We argue by contradiction. Let us assume that there exists $\tilde c \ge c_0$ such that, for arbitrarily large values of $b$, the value $c^*$ is equal to $-\infty$. In other words, let us assume that $A_{\tilde c,b}$ can be deformed in $A_{c,b}$, for any $c<\tilde c$. 
		
		Let $r_0>0$ be the one defined in Lemma \ref{lemma:construction_boundary}. Since $\tilde c \ge c_0$ the set   $\Gamma_r$ is non-empty for any $r>r_0$ and there exist at least a map $\gamma$ whose image is completely contained in $\{\mathcal{A}\le \tilde c\}$.        
		Let us consider a sequence $c_m \to -\infty$ and deformations $h_m$ sending $ A_{\tilde c,b}$ in $A_{c_m,b}$.        
		Pick any $\gamma' \in \Gamma_r$ with $\gamma'(B_1)\subset A_{\tilde c, b}$ and define $\gamma_m(x) = h_m (1, \gamma'(x))$. It follows that $\gamma_m(B_1)\subseteq A_{c_m,b}$ and $\gamma_m\vert_{\partial B_1}=\gamma_0$, so that $\gamma_m\in \Gamma_r$. However, thanks to Lemma \ref{lemma:bound_G_A_internal_point}, there is always a point $x_m$ having \[
		\mathcal{G}_{\lambda,\tilde c}(\gamma_m(x_m))\le C_1\text{ and } \mathcal{A}(\gamma_m(x_m))\ge C_2.
		\]
		Since $\tilde{c}$ is fixed and $C_1, C_2$ only depend on $\tilde{c}, T$ and $n$, whilst $b$ can be chosen arbitrarily large, a contradiction arises. 
		\end{proof}
	\end{prop}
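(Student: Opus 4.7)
The plan is to argue by contradiction, combining the topological content of Lemma \ref{lemma:construction_boundary} (an $(n-2)$-sphere of capping disks) with the a priori bound provided by Lemma \ref{lemma:bound_G_A_internal_point}. Suppose, for arbitrarily large $b$, one had $c^*=-\infty$: then $A_{\tilde c,b}$ would be deformable into $A_{c_m,b}$ for a sequence $c_m\to-\infty$, via continuous maps $h_m:[0,1]\times\mathcal H\to\mathcal H$ that are the identity on $A_{c_m,b}$.

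Fix $\tilde c\ge c_0$, and for $b$ large choose $r>r_0(b)$ as in Lemma \ref{lemma:construction_boundary}. Consider the radial cap $\gamma':=\Theta\circ(\Phi\circ\Theta)^{-1}(r\,\cdot\,):\overline{B_1}\to\mathcal H$, which by that lemma belongs to $\Gamma_r$, has image inside $\{\mathcal A\le c_0\}\subset\{\mathcal A\le\tilde c\}$, and whose boundary values $\gamma_0(\partial B_1)$ lie in $\{\mathcal G_{\lambda,\tilde c}>b\}\cap\{\mathcal A\le\tilde c\}$. This last inclusion is the crucial geometric observation: the boundary sits in the second component of $A_{c_m,b}$ \emph{regardless of how negative $c_m$ is}, and so it is frozen by every admissible deformation $h_m$. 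Consequently $\gamma_m(x):=h_m(1,\gamma'(x))$ is again in $\Gamma_r$, and now has image contained in $A_{c_m,b}$.

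Applying Lemma \ref{lemma:bound_G_A_internal_point} to each $\gamma_m$ produces a point $x_m\in B_1$ with
\[
\mathcal G_{\lambda,\tilde c}(\gamma_m(x_m))\le C_1 \quad\text{and}\quad \mathcal A(\gamma_m(x_m))\ge C_2,
\]
where $C_1,C_2$ depend only on $T$, $n$, $\lambda$ and $\tilde c$. Set $b(\tilde c):=C_1$. For $b>b(\tilde c)$ the point $\gamma_m(x_m)$ cannot lie in $\{\mathcal G_{\lambda,\tilde c}\ge b\}$, so, belonging to $A_{c_m,b}$, it must satisfy $\mathcal A(\gamma_m(x_m))\le c_m$. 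This collides with $\mathcal A(\gamma_m(x_m))\ge C_2$ as soon as $c_m<C_2$, giving the contradiction and hence $c^*>-\infty$. The reformulation in the statement follows directly: $c^*$ is an infimum, so no deformation of $A_{\tilde c,b}$ into $A_{c^*-\varepsilon,b}$ can exist, and the same obstruction, applied with $A_{c,b}$ in place of $A_{\tilde c,b}$ for $c\in(c^*,\tilde c]$, yields the asserted non-deformability.

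The hard part is not this final bookkeeping but the preparatory work already done in Lemmas \ref{lemma:construction_boundary} and \ref{lemma:bound_G_A_internal_point}: producing a boundary $(n-2)$-sphere on which $\mathcal G_{\lambda,\tilde c}$ is arbitrarily large and $\mathcal A$ is uniformly controlled by $c_0$ (so that the class $\Gamma_r$ is non-trivial and boundary-preserved), and then exploiting the degree-one property of $\Phi\circ\gamma_0$ to force every capping disk to hit a configuration with controlled mutual distances, where both functionals admit the matching bounds $C_1, C_2$. Once these two ingredients are in place, the non-deformability follows by the simple clash between the linking-type lower bound on $\mathcal A$ and the assumed downward push of the deformation.
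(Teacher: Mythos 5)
Your proof is correct and follows the same contradiction argument as the paper, using Lemma \ref{lemma:construction_boundary} to produce a boundary sphere frozen by the deformations and Lemma \ref{lemma:bound_G_A_internal_point} to locate a point with the controlled bounds $\mathcal{G}_{\lambda,\tilde c}\le C_1$, $\mathcal{A}\ge C_2$. If anything, you make the final step more transparent than the paper does: by setting $b(\tilde c)=C_1$ explicitly, you identify exactly why the controlled point must land in $\{\mathcal{A}\le c_m\}$ and hence why $c_m<C_2$ yields the contradiction, where the paper only says ``a contradiction arises.''
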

	As a consequence, whenever $f_i$ and $g_{ij}$ satisfy conditions \eqref{assumption:values_f_g},\eqref{assumption:homogenuity},\eqref{assumption:attraction_infinity_2} and \eqref{assumption:extra_assumptions_f}, the assumptions of Lemma \ref{lemma:critical_point} are fulfilled. Notice that 
	\[
	\mathcal{G}_{\lambda,\tilde{c}}(q) = \mathcal{G}_{\lambda, c^*}(q) + \lambda(\tilde{c}-c^*).
	\] 
	Therefore, we conclude that there exists at
	least a critical point of $\mathcal{A}$ in the set $\{\mathcal{A}=c^*\}\cap\{\mathcal{G}_{\lambda, c^*}< b\}$ with $c^*\le c_0$.

\section{Proof of Theorem \ref{thm:solution_helium}}\label{sec:proof_thm1}
	In this section we prove Theorem \ref{thm:solution_helium}. First, we build a family of approximating functionals satisfying the assumptions of Theorem \ref{thm:general_existence_result}. Then, we show that their critical points converge to a collisionless solution of \eqref{eq:helium_equation}.
\subsection{Existence of $\ve-$solutions}
	We start with building an approximating family of functionals $\mathcal{A}_\ve$, in which we \emph{smooth out} the singularity of  $f_j$ at the origin  replacing $f_j$ with a $C^{1,1}(\mathbb R)$ function $f_j^\ve$ . Then, we modify its behaviour at infinity, toobtainher with the one of the $g_{ij}$. We consider a family of $\tilde{f}^\ve_j$ and $\tilde{g}_{ij}$ satisfying assumptions \eqref{assumption:values_f_g},\eqref{assumption:homogenuity},\eqref{assumption:attraction_infinity_2} and \eqref{assumption:extra_assumptions_f}, whose derivatives agree in a neighbourhood of the origin  with the ones of the original functions, namely:
	\begin{align*}
	(\tilde{f}^\ve_j)'(s)&= (f_j^\ve)'(s) \text{ for all }   s<\tilde{s},\\
    \tilde g_{ij}'(s)&= g_{ij}'(s) \text{ for all } 0<s<\tilde{s},
	\end{align*}
    and such that the differences with the $f_j^\ve$ and $g_{ij}$ are controlled arbitrarily well.
	An explicit construction of such a family is given in the next section.
    The modification at infinity is essentially irrelevant in the forthcoming discussion and it is only needed to apply Theorem \ref{thm:general_existence_result}. Indeed, the solutions of \eqref{eq:helium_newton} we will consider will lie in a compact set where the derivatives of these functions coincide, thanks to some a priori estimates in the spirit of Lemma \ref{lemma:uniform_bounds} below.
    We are thus brought to consider the following functional on $\mathcal{H}$
	\begin{equation*}
		\mathcal{A}_\ve(q) =  \sum_{i=1}^n \int_0^T \frac{1}{2} \vert \dot{q}\vert^2+f_i^\ve(q_i)-\sum_{i<j}g_{ij}(q_j-q_i).
	\end{equation*}
	A straightforward application of Theorem \ref{thm:general_existence_result}, given in Section \ref{sec:smoothing}, yields the following result.
	\begin{thm}
		\label{thm:ve_critical_points}
		Let us assume that \eqref{assumption:values_f_g}-\eqref{assumption:attraction_infinity} hold and that the $f_j$ are convex.
		For any $\ve>0$ there exists a critical point $q^\ve$ of $\mathcal{A}_\ve$ in $\mathcal{H} \cap \{\mathcal{A}_\ve\le c^\ve\}$ where $c^\ve$ is the value defined in \eqref{eq:c_0} and \eqref{eq:minimization_problem_kepler} choosing $f_j^\ve$ as potentials. 
	\end{thm}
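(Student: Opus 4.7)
The idea is to apply Theorem \ref{thm:general_existence_result} to a functional built from explicit approximations $f_j^\ve$ of the singular $f_j$, modified at infinity so that the weaker condition \eqref{assumption:attraction_infinity} is upgraded to \eqref{assumption:attraction_infinity_2}; the action bound $c^\ve$ is then read off directly from the mini-max construction of Section \ref{sec:boundary_disk}.

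First I would construct $f_j^\ve$ explicitly: set $f_j^\ve(s) = f_j(s)$ on $[\ve,\infty)$ and extend $f_j^\ve$ to $(-\infty,\ve)$ as a convex, $C^{1,1}$ function whose derivative is strictly negative and bounded above by some $-\nu_\ve<0$ on $(-\infty,0]$, e.g.\ by gluing an affine tail of slope $f_j'(\ve)$ via a standard mollification. The pointwise Ambrosetti--Rabinowitz condition $sf_j'+\alpha f_j\ge 0$ can be preserved on all of $\mathbb{R}$ by choosing the extension so that $s\mapsto s^\alpha f_j^\ve(s)$ stays nondecreasing through the origin; convexity is inherited from $f_j$ on $[\ve,\infty)$ and built in on the extension. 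Next, using \eqref{assumption:attraction_infinity}, I would further modify each $f_j^\ve$ (and if necessary $g_{ij}$) outside a ball of radius $R_0$ taken above the a priori $L^\infty$-bound for solutions of \eqref{eq:helium_equation} with boundary conditions \eqref{eq:boundary_conditions}, replacing the tail by one with a sufficiently steep negative slope so that \eqref{assumption:attraction_infinity_2} is enforced. Call the resulting functions $\tilde f_j^\ve, \tilde g_{ij}$: they verify all of \eqref{assumption:values_f_g}, \eqref{assumption:homogenuity}, \eqref{assumption:attraction_infinity_2}, \eqref{assumption:convexity} and \eqref{assumption:extra_assumptions_f}, while coinciding with $f_j^\ve, g_{ij}$ on the dynamically relevant region.

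Applying Theorem \ref{thm:general_existence_result} to the action functional built from $\tilde f_j^\ve$, $\tilde g_{ij}$ yields a collisionless critical point $q^\ve\in\mathcal{H}$ at the mini-max level $c^*_\ve$ introduced in \eqref{eq:min_max_value}. Inspecting the proof of Lemma \ref{lemma:construction_boundary}(iii), with $\Theta$ built from a minimizer of the analogue of \eqref{eq:minimization_problem_kepler} with $f_j^\ve$ in place of $f_j$, one obtains
\[
c^*_\ve \;\le\; \sup_{s\in S}\mathcal{A}_\ve\bigl(\Theta(s)\bigr) \;\le\; c^\ve,
\]
the second inequality using only that the $g_{ij}$ contribute negatively to the action. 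An a priori $C^0$ estimate on $q^\ve$ following from this action bound and energy conservation (in the spirit of Lemma \ref{lemma:proof_proportionality_gradients}) then places $q^\ve$ inside the region where $\tilde f_j^\ve=f_j^\ve$ and $\tilde g_{ij}=g_{ij}$, so $q^\ve$ is a critical point of $\mathcal{A}_\ve$ itself.

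The main obstacle will be executing the two-step modification so that all of \eqref{assumption:values_f_g}, \eqref{assumption:homogenuity}, \eqref{assumption:attraction_infinity_2}, \eqref{assumption:convexity} and \eqref{assumption:extra_assumptions_f} are satisfied \emph{simultaneously}, and so that the constants appearing in Proposition \ref{prop:no_solution_eigenvalue} and in the compactness arguments remain under control uniformly in $\ve$. This uniformity is what will allow, in the next subsection, the passage to the limit $\ve\to 0$ and the recovery of a genuine solution of \eqref{eq:helium_equation}.
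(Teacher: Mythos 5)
Your high-level strategy — smooth $f_j$ near the origin, modify at infinity to upgrade \eqref{assumption:attraction_infinity} to \eqref{assumption:attraction_infinity_2}, invoke Theorem \ref{thm:general_existence_result}, and then show the critical point lies where the modifications are inactive — matches the paper's. Two of the intermediate steps, however, have genuine gaps.

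The claimed $C^0$ bound does not follow from the action bound and energy conservation, and Lemma \ref{lemma:proof_proportionality_gradients} does not supply it either: that lemma treats solutions of $\nabla\mathcal{A}=\sigma\nabla\mathcal{G}$ with $\sigma>0$, and even there it bounds $|q_i|$ only \emph{relative} to the gap size $R$. For an actual critical point of the approximating functional, the action bound together with \eqref{assumption:homogenuity} controls $\Vert\dot q\Vert_2$ and hence $|q_1|$ via Poincar\'e, but $q_2,\dots,q_n$ could a priori drift to infinity while the energy stays bounded, since $f_i,g_{ij}\to 0$ there. What actually pins the configuration down is the variational test against the step vectors $w_j$ of \eqref{eq:def_w_j}: the boundary conditions $\dot q_j(0)=\dot q_j(T)=0$ give $\langle\nabla\mathcal{A}_\ve(q),w_j\rangle=0$, and if $q_j(t)\geq s_0$ for all $t$ (with $s_0$ from \eqref{assumption:attraction_infinity}) the integrand is strictly negative, a contradiction. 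Iterating from $j=2$ upward gives the absolute $C^0$ bound. This mechanism is essential and your sketch omits it.

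The tail modification is also subtler than ``a sufficiently steep negative slope''. Making $-f'$ larger pushes against $sf'+\alpha f\ge 0$ in \eqref{assumption:homogenuity}, which caps how negative $f'$ can be relative to $f$. The paper's construction instead adds a \emph{slowly decaying} tail $\delta/s^{\gamma+1}$ with $\gamma<\alpha$ to $-(\tilde f_j^{\ve,\delta})'$ — so $\tilde f_j^{\ve,\delta}$ itself picks up a compensating $\delta/(\gamma s^\gamma)$ and \eqref{assumption:homogenuity} survives — and \emph{simultaneously} truncates $-g_{ij}'$ so that it decays at least like $s^{-\alpha-1}$; both modifications are needed for the $f$-term to dominate at infinity and yield \eqref{assumption:attraction_infinity_2}. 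The quantitative estimate $\tilde f_j^{\ve,\delta}-\delta/\gamma\le f_j^\ve$ is also what makes the argument about the action level go through: since $\tilde f_j^{\ve,\delta}\ge f_j^\ve$, the mini-max level of the modified functional is a priori \emph{larger} than what you'd read off from $\mathcal{A}_\ve$, so your chain $c^*_\ve\le\sup_{s\in S}\mathcal{A}_\ve(\Theta(s))\le c^\ve$ does not directly apply; the paper must bound the modified mini-max level by $c^\ve+n\delta T/\gamma$, show the resulting critical point is stable under shrinking $\delta$ once it sits in the unmodified compact region, and only then pass to the infimum over $\delta$ to land at $c^\ve$.
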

    Clearly, for every $q^\ve$, there exists $b_\ve>0$ such that  $q^\ve \in \{\mathcal{G}\le b_\ve\}$.
	Unfortunately, it is possible that $b_\ve \to +\infty$ as $\ve\to 0$, since the original $f_j$ could be unbounded (compare with \eqref{eq:esitmate_going_wrong_limit}). Thus, we know that each critical point $q^\ve$ has no collisions, but, a priori, the same property is not guaranteed for the accumulation points.  The rest of the section is devoted to proving that the sequence $(q^\ve)$ converges, up to subsequence, to a collisionless solution of \eqref{eq:helium_equation}.

\subsection{Construction of $f_j^\ve$, $\tilde{f}_j^\ve$ and $\tilde{g}_{ij}$}
\label{sec:smoothing}

    The first step is to modify any $n-$ple of $f_j$ outside a small neighbourhood of the origin. 
    \begin{lemma}
    	\label{lemma:smoothing_f}
    	For any $f_j$ satisfying assumption \eqref{assumption:values_f_g}-\eqref{assumption:attraction_infinity} and any $\ve>0$ small enough there exists a function $f_j^\ve: \mathbb{R}\to (0,+\infty)$ satisfying \eqref{assumption:values_f_g}-\eqref{assumption:attraction_infinity} and \eqref{assumption:extra_assumptions_f} which coincide with $f_j$ outside an $\ve-$neighbourhood  of the origin. Moreover, if $f_j$ is convex so is $f_j^\ve$.
    	\begin{proof}
    		Let us consider the line tangent to $f_j$ at $\ve$. It reads:
    		\begin{equation*}
    			p_{j,\ve}(s) = f_j(\ve)+ f'_j(\ve)(s-\ve).
    		\end{equation*}
    		Let us define:
    		\begin{equation*}
    			f_{j}^\ve(s) = \begin{cases}
    				p_{j,\ve}(s) \text{ if } s \le \ve,\\
    				f_j(s) \text{ otherwise }
    			\end{cases}
    		\end{equation*}
    		All assumptions but \eqref{assumption:homogenuity} are clearly satisfied. Let us check the homogeneity condition for $s\le \ve$. Since
    		\[
    		\frac{d}{d s}\left[ s (f_j^\ve(s))'+\alpha f_j^\ve(s) \right]= (1+\alpha)  f_j'(\ve) <0
    		\] 
    		and, since the condition holds for $s=\ve$, we are done.  
    	\end{proof}
    \end{lemma}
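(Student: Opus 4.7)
The plan is to construct $f_j^\ve$ by replacing $f_j$ on $(-\infty,\ve]$ with its affine tangent at $\ve$. Concretely, set $p_{j,\ve}(s):=f_j(\ve)+f_j'(\ve)(s-\ve)$ and define
\[
f_j^\ve(s) = \begin{cases} p_{j,\ve}(s) & \text{if } s\le \ve,\\ f_j(s) & \text{if } s>\ve. \end{cases}
\]
By construction value and derivative match at $s=\ve$. Since $(f_j^\ve)'$ is the constant $f_j'(\ve)$ on $(-\infty,\ve]$ and equals $f_j'$ on $[\ve,\infty)$, and $f_j'$ is locally Lipschitz on $(0,\infty)$ by the $C^{1,1}$ assumption in \eqref{assumption:values_f_g}, the function $f_j^\ve$ lies in $C^{1,1}(\mathbb{R})$.

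Next I would verify the assumption \eqref{assumption:extra_assumptions_f} and preservation of \eqref{assumption:values_f_g}, \eqref{assumption:attraction_infinity}. Positivity on $(-\infty,\ve]$ is immediate because $f_j'(\ve)<0$ forces $p_{j,\ve}(s)\ge f_j(\ve)>0$. The strict negativity of the derivative and the lower bound $(f_j^\ve)'(s)\le f_j'(\ve)=:-\nu<0$ for $s\le 0$ are built into the construction. Conditions \eqref{assumption:values_f_g} (decay at $+\infty$) and \eqref{assumption:attraction_infinity} (competition at infinity) depend only on the behaviour of $f_j$ for large $s$ and are untouched.

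The only nontrivial check concerns the Ambrosetti-Rabinowitz condition \eqref{assumption:homogenuity} on the interval $(-\infty,\ve]$. I would compute
\[
\frac{d}{ds}\bigl(s(f_j^\ve)'(s)+\alpha f_j^\ve(s)\bigr) = (1+\alpha)f_j'(\ve)<0,
\]
so the quantity is strictly decreasing in $s$ on $(-\infty,\ve]$. It therefore suffices to evaluate at the right endpoint $s=\ve$, where one recovers exactly $\ve f_j'(\ve)+\alpha f_j(\ve)\ge 0$ from the hypothesis applied to the original $f_j$; for $s<\ve$ the quantity is strictly larger, so \eqref{assumption:homogenuity} holds.

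Finally, for the convexity statement I would simply note that $(f_j^\ve)'$ is constant on $(-\infty,\ve]$ and, on $[\ve,\infty)$, equals the non-decreasing function $f_j'$ (which is non-decreasing precisely because $f_j$ is convex). The two match at $\ve$, so $(f_j^\ve)'$ is non-decreasing on all of $\mathbb{R}$, whence $f_j^\ve$ is convex. The main (mild) obstacle is guaranteeing \eqref{assumption:homogenuity} down to $s\to-\infty$, but the monotonicity argument above handles this at once.
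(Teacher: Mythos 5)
Your proposal is correct and uses exactly the same construction and argument as the paper: replace $f_j$ on $(-\infty,\ve]$ by its tangent line at $\ve$, then verify \eqref{assumption:homogenuity} by noting that $s\mapsto s(f_j^\ve)'(s)+\alpha f_j^\ve(s)$ has constant negative derivative $(1+\alpha)f_j'(\ve)$ there and is nonnegative at $s=\ve$. You merely spell out a few details the paper leaves implicit (the $C^{1,1}$ regularity, positivity of $p_{j,\ve}$, and the convexity claim via monotonicity of $(f_j^\ve)'$), all correctly.
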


     \begin{lemma}
     	\label{lemma:change_infinity_fg}
    	For any choice of functions $f^\ve_j$ and $g_{ij}$ satisfying assumptions \eqref{assumption:values_f_g}-\eqref{assumption:attraction_infinity} and $\eqref{assumption:extra_assumptions_f}$ there exists $k_0$,$\delta_0$ and functions $\tilde{f}_j^{\ve,\delta}$, $\tilde{g}_{ij}^k$ such that for any  $\delta<\delta_0$ and $k>k_0$ $\tilde{f}_j^{\ve,\delta}$, $\tilde{g}_{ij}^k$ satisfy simultaneously  \eqref{assumption:attraction_infinity} with the same choice of $s_0$ and $t_0$ as for  $f_{j}$, $g_{ij}$ and \eqref{assumption:attraction_infinity_2} (with possibly a non-uniform choice of $s_0$) as well as  \eqref{assumption:values_f_g},\eqref{assumption:homogenuity} and \eqref{assumption:extra_assumptions_f}.  
    	
    	Moreover, $(\tilde{f}_j^{\ve,\delta})$, $(\tilde{g}_{ij}^k)$ coincide with $(f_{j}^\ve)'$ and $g_{ij}'$ on arbitrarily big neighbourhoods of the origin as $\delta\to0$ and $k \to \infty$. 
    	
    	If $f_j^\ve$ is the approximation given in Lemma \ref{lemma:smoothing_f} and $f_j$ is convex, we can further assume that
    	\begin{equation}
    		\label{eq:difference_f_f_delta}
    		\tilde{f}_j^{\ve, \delta}(s)-\frac{\delta}{\gamma} \le f_j^{\ve}(s)\le f_j(s).
    	\end{equation}
    
    	\begin{proof}
    		First, we modify $f_j^\ve$ and $g_{ij}$ at infinity.
   			Let $0<\gamma<\alpha$, $\delta>0$ and define
			 \[
			      u_{\delta,j}(s)=\begin{cases}
				-(f_j^\ve)'(s), &\text{ if } s<1,\\
				\max\{-(f_j^\ve)'(s),\frac{\delta}{s^{\gamma+1}}\} &\text{ if } s\ge1.
					\end{cases}
			\]

Clearly, $u_\delta$ is continuous provided that $\delta<-(f_j^\ve)'(1)$ and agrees with $-(f_j^\ve)'$ on arbitrarily large neighborhoods of the origin up to choosing a $\delta$ small enough. We then define
\[
\tilde{f}_j^{\ve, \delta}(s) =\int_s^\infty u_{\delta,j}(\tau) d \tau.
\]
Let us observe that, by definition,  $f_j^\ve(s)+ \frac{\delta}{\gamma s^\gamma}\ge \tilde{f}^{\ve, \delta}_j(s)\ge \max\{f_j^\ve, \frac{\delta}{\gamma s^\gamma}\}$ for $s\ge1$ and $\tilde{f}_j^{\ve,\delta}(s)\ge f_j^{\ve}(s)$ for all $s$. So, naturally, $\tilde{f}_j^{\ve,\delta}$ satisfies  \eqref{assumption:values_f_g}-\eqref{assumption:homogenuity}. 

We now consider the $g_{ij}$. Let us observe that $\liminf_{s\to \infty} -s^{\alpha+1} g_{ij}'(s) = \ell < +\infty$. Indeed, let us assume by contradiction  that $\ell$ is equal to $+\infty$, applying De l'Hopital rule we find
\[
+\infty = \lim_{s\to+\infty} - \alpha \frac{g_{ij}'(s) }{1/s^{\alpha+1}} = \lim_{s\to+\infty} \frac{g_{ij}(s)}{1/s^\alpha} = \lim_{s\to+\infty}s^\alpha g_{ij}(s).
\]
However, \eqref{assumption:homogenuity} implies that $s^{\alpha}g_{ij}(s) \le g_{ij}(1)$ for $s>1$ and thus $\ell<g_{ij}(1)$, a contradiction.
Thus, there exists a sequence $\{s_k\}_k$ converging to $+\infty$ satisfying
\[
-g_{ij}'(s_k) < \frac{\ell +1}{ s_k^{\alpha+1}}.
\]

This prompts us to define
\[
w^k_{ij}(s) = \begin{cases}
	-g_{ij}'(s), &\text{ if } s<s_k,\\
	\min\{-g_{ij}'(s),\frac{\ell+1}{ s^{\alpha+1}}\} &\text{ if } s\ge s_k.
\end{cases}, \quad \tilde{g}_{ij}^k(s)  = \int_s^\infty w^k_{ij}(\tau) d \tau.
\]

Note that for $s\ge s_k$, $\tilde{g}_{ij}^k(s)\le \min \left\{ g_{ij}(s), \frac{ \ell+1}{\alpha s^\alpha}\right\}
$
and for $0\le s \le s_k$ we have $\tilde{g}_{ij}^k(s)\le g_{ij}(s)$ and thus $g_{ij}^k$ satisfy \eqref{assumption:values_f_g}-\eqref{assumption:homogenuity}.

 As far \eqref{assumption:attraction_infinity} is concerned, let us observe that for any $s>t>0$ we have
\[
-(f_{j}^\ve) '(s)+\sum_{i<j}g_{ij}'(s-t)\le -(\tilde{f}_j^{\ve, \delta})'(s)+\sum_{i<j}(\tilde{g}^k_{ij})'(s-t).
\]
Thus, if \eqref{assumption:attraction_infinity} is satisfied for $f_j^\ve$ and $g_{ij}$ for some $t_0,s_0$ it is satisfied for $\tilde{f}^{\ve, \delta}_j$ and $\tilde{g}_{ij}^k$ as well.

Finally, we turn to  \eqref{assumption:attraction_infinity_2}. 
	Recall that, thanks to \eqref{assumption:homogenuity}, the function $	s^{\alpha}g(s)$ is decreasing and so there exists an $L>0$ such that 
	\[
	\sup_{k} g(s_k)s^\alpha_k< L.
	\]
	On the other hand, \eqref{assumption:homogenuity} implies that for $0<s \le s_k$
	\[
	g_{ij}(s) \ge \frac{s_k^\alpha g(s_k)}{s^\alpha}\ge \frac{L}{s^\alpha} \text{ and so } -g_{ij}'(s) \ge \frac{\alpha L}{s^{\alpha+1}}.
	\]
Thus, setting $C = \min\left\{\ell+1,\alpha L\right\}$ we have that $-(\tilde{g}_{ij}^k)'\ge \frac{C}{s^{\alpha+1}}$. Moreover, if $s\ge 1$ $-(\tilde{f}_j^{\ve, \delta})'(s)\ge \frac{\delta}{s^{\gamma+1}}$.
Summing up we have 
\[
\frac{\delta}{s^{\gamma+1}}-\sum_{i<j}\frac{C}{(s-t_i)^{\alpha+1}} \le  -(\tilde{f}_j^{\ve, \delta})'(s)+\sum_{i<j}(\tilde{g}^k_{ij})'(s-t_i).
\]
Imposing $s-t_i\ge s/2n$ we find that the left-hand side is positive as soon as 
\[
s\ge \max\left\{\left(\frac{j C (2n)^{\alpha+1}}{\delta}\right)^{\frac{1}{\alpha-\gamma}},1\right\}.
\]

The last observation we make concerns the quantities $\vert f_j^\ve-\tilde{f}_j^\ve\vert$ and $\vert g_{ij}-\tilde{g}_{ij}^k\vert$. For $s \in \mathbb R$  (and, respectively, $s>0$) we have
\[
\vert f_j^\ve(s)-\tilde{f}_j^{\ve,\delta}(s) \vert \le \int_{\max \{s,1\}}^\infty \frac{\delta}{\tau^{\gamma+1}} \le \frac{\delta}{\gamma}, \quad \vert g_{ij}(s) -\tilde{g}_{ij}^k(s) \vert \le \int_{\max\{s_k,s\}}^\infty \frac{\ell +1}{\tau^{\alpha+1}} \le\frac{(\ell+1)}{\alpha s_k^{\alpha}}
\]

Combining this with the convexity assumption in \eqref{assumption:convexity} it follows that
\begin{equation*}
\tilde{f}_j^{\ve, \delta}(s)-\frac{\delta}{\gamma} \le f_j^{\ve}(s)\le f_j(s).
\end{equation*}

\end{proof}
\end{lemma}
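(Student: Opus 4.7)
The plan is to modify the tails of $f_j^\ve$ and $g_{ij}$ in opposite monotone directions: make the attractive potential $f_j^\ve$ \emph{more attractive} at infinity, so that $-(f_j^\ve)'$ decays slower than a prescribed power, and make the repulsive potential $g_{ij}$ \emph{less repulsive} at infinity, so that $-g_{ij}'$ decays at least as fast as $s^{-(\alpha+1)}$. Since \eqref{assumption:homogenuity} forces $g$ to decay at rate at least $\alpha$, and we can engineer the attractive tail to decay only as $s^{-\gamma}$ with $\gamma<\alpha$, this mismatch should yield \eqref{assumption:attraction_infinity_2}: in the admissible range $s-t_i\ge s/(2n)$, the attractive term beats the sum of repulsions uniformly in the $t_i$ once $s$ is large.

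Concretely, I would fix $\gamma\in(0,\alpha)$, and for $\delta$ smaller than $-(f_j^\ve)'(1)$ set
\[
u_{\delta,j}(s)=\begin{cases}-(f_j^\ve)'(s), & s<1,\\ \max\{-(f_j^\ve)'(s),\ \delta/s^{\gamma+1}\}, & s\ge 1,\end{cases}\qquad \tilde f_j^{\ve,\delta}(s):=\int_s^\infty u_{\delta,j}(\tau)\,d\tau.
\]
For the repulsion, I would first show $\ell:=\liminf_{s\to\infty}-s^{\alpha+1}g_{ij}'(s)<+\infty$: if it were $+\infty$, de l'H\^opital would force $s^\alpha g_{ij}(s)\to+\infty$, contradicting the monotonicity of $s\mapsto s^\alpha g_{ij}(s)$ implied by \eqref{assumption:homogenuity}. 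Then I would pick $s_k\to\infty$ with $-g_{ij}'(s_k)\le(\ell+1)/s_k^{\alpha+1}$ and define
\[
w_{ij}^k(s)=\begin{cases}-g_{ij}'(s), & s<s_k,\\ \min\{-g_{ij}'(s),\ (\ell+1)/s^{\alpha+1}\}, & s\ge s_k,\end{cases}\qquad \tilde g_{ij}^k(s):=\int_s^\infty w_{ij}^k(\tau)\,d\tau,
\]
so that continuity at $s_k$ is automatic by the choice of $s_k$. Sending $\delta\to 0$ and $k\to\infty$ pushes the modification arbitrarily far from the origin, which yields the agreement-on-neighbourhoods claim.

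The verification then splits into several case-by-case but essentially routine checks. Conditions \eqref{assumption:values_f_g} and \eqref{assumption:extra_assumptions_f} are immediate. For \eqref{assumption:attraction_infinity}, since $u_{\delta,j}\ge -(f_j^\ve)'$ and $w_{ij}^k\le -g_{ij}'$, the modifications only reinforce the inequality, which is therefore preserved with the same $s_0$ and $t_0$. For \eqref{assumption:homogenuity} I would argue pointwise: on the region where the original derivative wins the max (respectively min), the Ambrosetti--Rabinowitz identity reduces to the one for $f_j^\ve$ or $g_{ij}$; on the region where the model tail wins, I would use the antiderivative sandwich $\tilde f_j^{\ve,\delta}\ge \delta/(\gamma s^\gamma)$ together with $\alpha/\gamma>1$ to verify $s\tilde f'+\alpha\tilde f\ge 0$, and the analogous bound $\tilde g_{ij}^k\le (\ell+1)/(\alpha s^\alpha)$ to verify $s\tilde g'+\alpha\tilde g\le 0$. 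For \eqref{assumption:attraction_infinity_2}, the decay mismatch gives, whenever $s-t_i\ge s/(2n)$,
\[
-\tilde f_j^{\ve,\delta\,\prime}(s)-\sum_{i<j}\bigl(-\tilde g_{ij}^{k\,\prime}(s-t_i)\bigr)\ \ge\ \frac{\delta}{s^{\gamma+1}}-\frac{(j-1)(\ell+1)(2n)^{\alpha+1}}{s^{\alpha+1}},
\]
which is positive once $s^{\alpha-\gamma}$ exceeds an explicit constant. The quantitative bound \eqref{eq:difference_f_f_delta} follows by writing $\tilde f_j^{\ve,\delta}(s)-f_j^\ve(s)=\int_s^\infty\max\{0,\,\delta/\tau^{\gamma+1}+(f_j^\ve)'(\tau)\}\,d\tau\le\int_1^\infty\delta/\tau^{\gamma+1}\,d\tau\le\delta/\gamma$, and $f_j^\ve\le f_j$ is inherited from the tangent-line construction of Lemma \ref{lemma:smoothing_f} together with convexity of $f_j$.

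The main obstacle, in my view, is the \eqref{assumption:homogenuity} check on the cap regions: a pointwise max or min of derivatives does not a priori preserve an Ambrosetti--Rabinowitz-type condition, and the construction succeeds only because the model tails $\delta/s^{\gamma+1}$ and $(\ell+1)/s^{\alpha+1}$ are themselves homogeneous, with the two exponents correctly separated by $\gamma<\alpha$. Establishing $\ell<+\infty$ via de l'H\^opital is a small but essential preliminary, because without an upper cap on $-g'$ one cannot suppress the repulsive tail enough to be dominated by the slowly decaying attractive tail demanded by \eqref{assumption:attraction_infinity_2}.
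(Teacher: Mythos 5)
Your proposal is correct and follows essentially the same construction as the paper: the same tangent-line/tail modifications via $u_{\delta,j}$ and $w_{ij}^k$, the same $\liminf$ finiteness argument for $\ell$ via de l'H\^opital, and the same verification strategy for \eqref{assumption:values_f_g}--\eqref{assumption:attraction_infinity_2}. Your treatment of the \eqref{assumption:attraction_infinity_2} estimate is in fact a bit more direct than the paper's (which detours through auxiliary constants $L$ and $C$), but the underlying mechanism — the decay mismatch $\gamma<\alpha$ on the cap regions, valid once $s\ge 2n\,s_k$ so that the tail caps apply to each $s-t_i$ — is identical.
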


\begin{proof}[Proof of Theorem \ref{thm:ve_critical_points}]

Let us now denote by $\tilde{q}_{\delta,k}^\ve$ any critical point of the functional
\[
\tilde{\mathcal{A}}(q) =  \int_0^T  \sum_{i=1}^n\frac{1}{2} \vert \dot{q}_i\vert^2+\tilde{f}_i^{\ve,\delta}(q_i)-\sum_{i<j}\tilde{g}^k_{ij}(q_j-q_i)
\]
in the set $\mathcal{H}$ whose existence is guaranteed by Theorem \ref{thm:general_existence_result}.

Now we show that, for $1/k$ and $\delta$  small enough, the solutions $\tilde q^{\ve}_{\delta,k}$ do not depend on $\delta$ or $k$ and are critical points of $\mathcal{A}_\ve$ as well. 

From Section \ref{sec:boundary_disk} we know that the action value of the $\q$
can be bound by 
\[
c_{\delta}^\ve  =  \inf_{y \in \mathcal{C}} \mathcal{F}_{\delta}^\ve(y),
\]
where the $ \mathcal{F}_{\delta}^\ve$ is the functional defined in \eqref{eq:minimization_problem_kepler}, using the $\tilde f_j^{\ve,\delta}$. We define $c^\ve$ to be the minimum level of $\mathcal{F}_0^\ve$, i.e. the functional defined in \eqref{eq:minimization_problem_kepler} using the $f_j^\ve$. Let $c_0 = \inf_{y \in \mathcal{C}} \mathcal{F}(y)$ where $\mathcal{F}$ stands for the one defined using the $f_j$, namely \[
   \mathcal{F}(y) =  \int_0^T\frac12 \sum_{i=0}^{n-1}\vert \dot y_i \vert^2+f_{i+1}(y_i).
\] Thanks to \eqref{eq:difference_f_f_delta} we have
\[
    c_{\delta}^\ve \le \frac{n \delta T}\gamma + c^\ve \le \frac{n \delta T}\gamma  + c_0.
\]
Assumption \eqref{assumption:homogenuity} then implies that any critical point $x$ of $\mathcal{F}_\delta^\ve$
 satisfy the following bound on the $L^2$ norm of the velocity
\[
    \Vert \dot{x}\Vert_2^2 \le \frac{2 \alpha (c_0 \gamma+n \delta  T)}{\gamma(\alpha+2)}.
\]
Thus, since on $\mathcal{C}$ a Poincaré inequality holds, minimizers are contained in a compact neighbourhood of the origin. Thus, for $\delta$ small enough $\mathcal{F}^\ve$ and $\mathcal{F}_{\delta}^\ve$ have the same minimizers and all the $\q$ satisfy
\[
\tilde{\mathcal{A}}(\q) \le c_0 +\frac{n\delta T}{\gamma}.
\]
Using \eqref{assumption:homogenuity}, this can be rephrased as
\[
\Vert \dot{\tilde{q}}_{\delta,k}^\ve \Vert_2^2 \le \frac{2 \alpha (c_0 \gamma+ n\delta  T)}{\gamma(\alpha+2)}.
\]
To simplify notation we now fix  $\ve,\delta$ and $k$ and denote by $q_i$ the $i-$th component of $\q$.
The above inequality implies that the absolute value of $q_1$ is controlled by $\sqrt{\frac{2T \alpha (c_0 \gamma+n \delta  T)}{\gamma(\alpha+2)}}$. Thanks to Assumption \eqref{assumption:attraction_infinity}, taking $t_0 = \frac{2 T \alpha (c_0 \gamma+n \delta  T)}{\gamma(\alpha+2)}$ there exists $s_0$ such that 
$q_2(t)<s_0$ at some $t\in [0,T]$. Indeed, if that were not the case the same variation used in the proof of Proposition \ref{prop:no_solution_eigenvalue}, $\q+\sigma w_2$, where $w_2$ is defined in \eqref{eq:def_w_j}, would give
\[
0 =  \langle\nabla\tilde{\mathcal{A}} (\q),w_2\rangle= \int_0^T \sum_{j=2}^{n} \left( (f_j^{\ve,\delta})'(q_j)- (\tilde{g}^k_{j1})'(q_j-q_1) \right)<0,
\]
Thus, the absolute value of $q_2$ is controlled by $s_0+\sqrt{T}t_0$. Iterating the argument we easily see that there is a constant $C$, independent on $\ve,k$ and $\delta$ such that
\[
\vert q_i(t)\vert \le C,
\]
for all $i$ and all $t \in [0,T]$. Thus, for $\delta$ and $1/k$ sufficiently small all the $\q$ are contained in a compact set in which $(f_i^{\ve,\delta})' = (f_i^\ve)'$ and $(\tilde{g}^k_{ij})' = g_{ij}'$. This also implies that the value of $\mathcal{G}$ on $\q$ is independent of $\delta$ and $k$ and that
\[
\mathcal{A}_\ve(\q) = \tilde{\mathcal{A}}(\q) \le\inf_{\delta} (c^\ve + n\delta T \gamma^{-1})= c^\ve \le   \inf_{\delta} (c_0 + n\delta T \gamma^{-1})= c_0,
\]
concluding the proof.
\end{proof}

\subsection{A priori estimates}
	In this section we collect some a priori estimates and qualitative properties of the sequence $(q^\ve)$ appearing in Theorem \ref{thm:ve_critical_points}.

	\begin{lemma}
		\label{lemma:uniform_bounds}
		Let us assume that $\eqref{assumption:values_f_g}-\eqref{assumption:attraction_infinity}$ hold and that the $f_j$ are convex.
        Let us consider the functional $\mathcal{F}(y) = \int_0^T\frac12 \sum_{i=0}^{n-1}\vert \dot y_i \vert^2+f_{i+1}(y_i)$ and
            \[
                c_0 = \inf_{y \in \mathcal{C}} \mathcal{F}(y)
            \]
            as defined in Section \ref{sec:boundary_disk}.
		There exists $\bar\ve>0$ such that, for any $\ve\in(0,\bar\ve)$ the following assertions hold true:
		\begin{enumerate}[label=\roman*)]
			\item $\mathcal{A}_\ve(q^\ve)\le c_0$ and $\Vert \dot{q}^\ve\Vert_2\le \frac{2 \alpha c_0}{\alpha+2}$,
			\item  the total energy is bounded, independently of $\ve$, in fact 
			\[
			-\frac{c_0}{T}<h_\ve=\sum_{i=1}^n\frac12(\dot{q}_i^\ve)^2 -  f_i^\ve(q_i^\ve)+\sum_{j<i}g_{ij}(q^\ve_i-q^\ve_j)< \frac{\alpha-2}{\alpha+2}c_0<0.
			\]
			\item There exists a constant $C>0$, not depending on $\ve$, such that $\Vert q^\ve \Vert_{H^1}\le C$. In particular, there exists a subsequence $q^\ve$ weakly converging to a function $\bar{q} \in \bar{\mathcal{H}},$ where $\bar{\mathcal{H}}$ is the weak $H^1$-closure of $\mathcal{H}$.    
		\end{enumerate}
		\begin{proof}
		Point $i)$ follows directly from point $iii)$ of Lemma \ref{lemma:construction_boundary} and the last part of the proof of Theorem \ref{thm:ve_critical_points}. 
		
		The bounds on the energy $h_\ve$ are a straightforward consequence of the following inequalities
		\[
		\frac{2\alpha}{\alpha+2}\mathcal{A}_\ve(q^\ve) \ge \Vert \dot{q}^\ve \Vert_2^2, \quad 
		T h_\ve = \Vert \dot{q}^\ve \Vert_2^2-\mathcal{A}_\ve(q^\ve) \le \frac{\alpha-2}{\alpha+2} \mathcal{A}_\ve(q^\ve) .
		\]
		Point $iii)$ follows from the same argument used in the  proof of Theorem \ref{thm:ve_critical_points}. Indeed, we proved that there exists a constant $C>0$ such that $\vert q^\ve(t)\vert\le C$ for all $t \in[0,T]$. Thus, $\Vert q \Vert_2\le \sqrt{T}C$. 
%
\end{proof}
	\end{lemma}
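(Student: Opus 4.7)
The plan is to deduce (i), (ii) and (iii) in cascade from the action bound $\mathcal{A}_\varepsilon(q^\varepsilon) \le c_0$, which has already been secured at the end of the proof of Theorem \ref{thm:ve_critical_points}.

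For (i), the action estimate is immediate. To obtain the kinetic bound I would test the critical point identity $\langle \nabla \mathcal{A}_\varepsilon(q^\varepsilon), q^\varepsilon\rangle = 0$ against the curve itself and apply the Ambrosetti--Rabinowitz condition \eqref{assumption:homogenuity}, which is inherited by $f_i^\varepsilon$ through Lemma \ref{lemma:smoothing_f}. Exactly as in the computation in point ii) of Lemma \ref{lemma:proof_proportionality_gradients}, this produces
\[
0 \;\ge\; \tfrac{\alpha+2}{2}\,\|\dot q^\varepsilon\|_2^2 \;-\; \alpha\,\mathcal{A}_\varepsilon(q^\varepsilon),
\]
whence $\|\dot q^\varepsilon\|_2^2 \le \frac{2\alpha}{\alpha+2}\mathcal{A}_\varepsilon(q^\varepsilon) \le \frac{2\alpha c_0}{\alpha+2}$. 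As a by-product, $\mathcal{A}_\varepsilon(q^\varepsilon) \ge 0$ and in particular $c_0 \ge 0$.

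For (ii), since $q^\varepsilon$ solves the Euler--Lagrange equations, the Hamiltonian $h_\varepsilon$ is constant along the flow, so integrating on $[0,T]$ gives the clean identity $T h_\varepsilon = \|\dot q^\varepsilon\|_2^2 - \mathcal{A}_\varepsilon(q^\varepsilon)$. Feeding the kinetic estimate from (i) into this relation yields $T h_\varepsilon \le \frac{\alpha-2}{\alpha+2}\mathcal{A}_\varepsilon(q^\varepsilon) \le \frac{\alpha-2}{\alpha+2}c_0$, which is strictly negative because $\alpha<2$ and $c_0>0$. The lower bound $Th_\varepsilon \ge -\mathcal{A}_\varepsilon(q^\varepsilon) \ge -c_0$ is immediate from $\|\dot q^\varepsilon\|_2^2 \ge 0$.

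For (iii), the main task is a uniform $C^0$ bound on $q^\varepsilon$, after which $H^1$-boundedness and weak compactness are standard. My plan here is to re-run verbatim the inductive argument used at the end of the proof of Theorem \ref{thm:ve_critical_points}: the kinetic bound from (i), together with $q_1^\varepsilon(0)=0$, controls $q_1^\varepsilon$ in $L^\infty$ by the fundamental theorem of calculus; then, testing $\nabla \mathcal{A}_\varepsilon(q^\varepsilon) = 0$ against the step vectors $w_j$ defined in \eqref{eq:def_w_j} and invoking \eqref{assumption:attraction_infinity} forces each successive $q_j^\varepsilon$ to visit the window $\{|q|\le s_0\}$ at some instant in $[0,T]$, so that the uniform derivative bound upgrades this to a uniform $L^\infty$ bound on the whole coordinate. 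The main (mild) obstacle is to make sure the constants do not deteriorate with $\varepsilon$; this is guaranteed by the fact that \eqref{assumption:attraction_infinity} is preserved under the smoothings of Lemmas \ref{lemma:smoothing_f}--\ref{lemma:change_infinity_fg}, so the threshold $s_0$ may be chosen $\varepsilon$-independent. Weak compactness in $H^1$ then delivers a subsequence converging weakly to some $\bar q$ in the weak closure $\bar{\mathcal{H}}$.
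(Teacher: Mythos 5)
Your overall strategy mirrors the paper's proof exactly: the action bound $\mathcal{A}_\varepsilon(q^\varepsilon)\le c_0$ from Theorem~\ref{thm:ve_critical_points}, the Ambrosetti--Rabinowitz test $\langle\nabla\mathcal{A}_\varepsilon(q^\varepsilon),q^\varepsilon\rangle=0$ to bound $\|\dot q^\varepsilon\|_2^2$, the identity $Th_\varepsilon=\|\dot q^\varepsilon\|_2^2-\mathcal{A}_\varepsilon(q^\varepsilon)$, and the iterative $C^0$ argument via the step vectors $w_j$ and \eqref{assumption:attraction_infinity} for (iii). Points (i) and (iii) are fine.

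There is, however, a genuine sign slip in your treatment of (ii): you write $\frac{\alpha-2}{\alpha+2}\mathcal{A}_\varepsilon(q^\varepsilon)\le\frac{\alpha-2}{\alpha+2}c_0$, but since $\alpha\in(0,2)$ the prefactor $\frac{\alpha-2}{\alpha+2}$ is \emph{negative}, so multiplying the inequality $\mathcal{A}_\varepsilon(q^\varepsilon)\le c_0$ by it \emph{reverses} the direction and yields $\frac{\alpha-2}{\alpha+2}\mathcal{A}_\varepsilon(q^\varepsilon)\ge\frac{\alpha-2}{\alpha+2}c_0$, not $\le$. To be fair, the paper's own terse proof displays the same pair of inequalities and asserts the conclusion without elaboration, so you appear to have inherited the gap rather than introduced it. What does follow cleanly from your two displayed relations is $Th_\varepsilon\le\frac{\alpha-2}{\alpha+2}\mathcal{A}_\varepsilon(q^\varepsilon)\le 0$, using $\mathcal{A}_\varepsilon(q^\varepsilon)\ge 0$ (which you correctly observed as a by-product of the AR estimate); combined with the lower bound $Th_\varepsilon\ge-c_0$, this gives uniform two-sided boundedness of $h_\varepsilon$ with $h_\varepsilon\le 0$, which is what is actually used in the later convergence arguments. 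If you insist on the sharper constant $\frac{\alpha-2}{\alpha+2}c_0$, you would need a \emph{lower} bound $\mathcal{A}_\varepsilon(q^\varepsilon)\ge c_0$, which contradicts point (i); so either state the weaker conclusion or supply an additional lower action bound that the present argument does not provide.
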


	   Let us denote by $\bar{q} $ a weak limit as in point $iii)$ of Lemma \ref{lemma:uniform_bounds}. 
	\begin{lemma}
		\label{lemma:concavity_q1}
		For every $\ve>0$, $q_1^\ve$ is strictly concave and increasing. Moreover, for $\ve$ small enough and $\delta\in(0,T)$, $q^\ve_1(t)\vert_{[\delta,T]}\ge C_\delta$, for some $C_\delta>0$  independent of $\ve$. In particular, for any $\delta\in(0,T)$ and $t\ge \delta$, $\bar{q}_1(t)\ge\bar{q}_1(\delta)>0$.
        \begin{proof}
           Thanks to the motion equations $q_1^\ve$ is strictly concave. Since $\dot q_1^\ve(T) = 0$ it is also monotone increasing.  
           Following the proof of \cite[Proposition 3.3]{helium_frozen} and using the motion equation, for any $t \in (0,T)$ we write
           \[
           q_1^\ve(t) = \int_0^t \dot{q}_1^\ve(\tau) d\tau = -\int_0^t\int_\tau^T (f^\ve_1)'(q^\ve_1(s))+\sum_{j=2}^ng_{1j}'(q^\ve_j(s)-q^\ve_1(s))dsd\tau.
           \]
           By contradiction, let us assume that there exists a sequence $\ve_m\to 0$ for which $q_1^{\ve_m}(t)\to 0$. Then, $q_1^{\ve_m}$ converges to $0$ uniformly on $[0,t]$ since it is monotone. Without loss of generality, we can assume that it converges weakly to some function $\bar{q}$. Thanks to Fatou Lemma, we have
           \begin{align*}
           0&=\liminf_m q_1^{\ve_m}(t) \ge\int_0^t \liminf_m \left(\int_\tau^T-(f^{\ve_m}_1)'(q^{\ve_m}_1(s))-\sum_{j=2}^ng_{1j}'(q^{\ve_m}_j(s)-q^{\ve_m}_1(s))ds\right)d\tau
            \\ &\ge\int_0^t\int_\tau^t\liminf_{n\to \infty}-(f^{\ve_n}_1)'(q_1^{\ve_n})- \int_0^t\int_t^T (f_1)'(\bar q_1)
           -\int_0^t\int_\tau^T \sum_{j=2}^n g_{1j}'(\bar{q}_j-\bar{q}_1) \\
           &>\int_0^t\int_\tau^t\liminf_{n\to \infty}-(f^{\ve_n}_1)'(q_1^{\ve_n}).
           \end{align*}
            However, the right-hand side is strictly positive  (possibly $+\infty$). So, we reached a contradiction.
        \end{proof}
	\end{lemma}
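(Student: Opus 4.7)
My plan divides naturally into two parts. For the first assertion I would read off the Euler--Lagrange equation for $q_1^\ve$,
\[
\ddot q_1^\ve(t) = (f_1^\ve)'(q_1^\ve(t)) + \sum_{j=2}^n g_{1j}'\bigl(q_j^\ve(t)-q_1^\ve(t)\bigr),
\]
and invoke the sign conditions in \eqref{assumption:values_f_g} and \eqref{assumption:extra_assumptions_f} to get $\ddot q_1^\ve<0$ on $[0,T]$, i.e.\ strict concavity. Combined with $\dot q_1^\ve(T)=0$, strict concavity forces $\dot q_1^\ve>0$ on $[0,T)$, whence strict monotonicity.

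For the uniform lower bound I would argue by contradiction. Suppose there exist $\ve_m\to 0$ and $t_0\in(0,T)$ with $q_1^{\ve_m}(t_0)\to 0$. By monotonicity and $q_1^{\ve_m}(0)=0$ we then have $q_1^{\ve_m}\to 0$ uniformly on $[0,t_0]$. Integrating the equation of motion twice while using $\dot q_1^\ve(T)=0$ gives
\[
q_1^\ve(t_0)=\int_0^{t_0}\!\int_\tau^T\Bigl[-(f_1^\ve)'(q_1^\ve(s))-\sum_{j=2}^n g_{1j}'(q_j^\ve(s)-q_1^\ve(s))\Bigr]\,ds\,d\tau,
\]
in which every summand is non-negative by the sign assumptions. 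Dropping the repulsion contributions and restricting the inner integration to $[\tau,t_0]$ yields the one-sided estimate
\[
q_1^{\ve_m}(t_0)\ge\int_0^{t_0}\!\int_\tau^{t_0}-(f_1^{\ve_m})'(q_1^{\ve_m}(s))\,ds\,d\tau.
\]

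Fatou's lemma then closes the argument. For each fixed $s\in(0,t_0]$, the tangent-line construction of Lemma \ref{lemma:smoothing_f} makes $-(f_1^{\ve_m})'(q_1^{\ve_m}(s))$ equal to $-f_1'(q_1^{\ve_m}(s))$ when $q_1^{\ve_m}(s)\ge\ve_m$ and to $-f_1'(\ve_m)$ otherwise; in either regime the argument of $f_1'$ shrinks to $0$, so by monotonicity of $f_1'$ (from convexity \eqref{assumption:convexity}) together with $f_1'<0$, one has $\liminf_m -(f_1^{\ve_m})'(q_1^{\ve_m}(s))\ge -f_1'(0^+)\in(0,+\infty]$. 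Applying Fatou, $\liminf_m q_1^{\ve_m}(t_0)\ge (-f_1'(0^+))\,t_0^2/2>0$, contradicting $q_1^{\ve_m}(t_0)\to 0$. The assertion for $\bar q_1$ is then immediate: the uniform bound $q_1^\ve(\delta)\ge C_\delta$ passes to the limit (the convergence is even uniform on $[\delta,T]$ by Rellich compactness), and $\bar q_1$ is non-decreasing as a pointwise limit of monotone functions.

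The delicate point, I expect, is the bookkeeping around the two regimes $q_1^{\ve_m}(s)\gtrless\ve_m$ inside the Fatou step: both must produce the same strictly positive lower bound, which reflects the fact that the attractive singularity of $f_1$ at $0$ is preserved by the tangent-line approximation. Everything else is formal.
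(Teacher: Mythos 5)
Your proof is correct and follows essentially the same route as the paper's: strict concavity plus $\dot q_1^\ve(T)=0$ gives monotonicity, and the double integration of the equation of motion combined with Fatou and the strict negativity of $f_1'$ near the origin delivers the uniform lower bound. The only presentational difference is that you drop the nonnegative contributions from the repulsive terms $g_{1j}'$ and from the interval $[t_0,T]$ before applying Fatou, which streamlines the argument (the paper retains them and notes their sign), and you make explicit—via convexity of $f_1$—why $\liminf_m -(f_1^{\ve_m})'(q_1^{\ve_m}(s)) \ge -f_1'(0^+) > 0$, a point the paper's proof leaves implicit in its final sentence.
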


\subsection{Convergence to a solution of \eqref{eq:helium_equation}}

	Now, we are ready to prove Theorem \ref{thm:solution_helium} and to show that an approximating sequence of minimizers $(q^\ve)$ approaches a solution of the original problem \eqref{eq:helium_equation}.

	\begin{proof}[Proof of Theorem \ref{thm:solution_helium}]
	We prove the statement in several steps. The strategy is to show that there exists a subsequence of the sequence $(q^\ve)$ given in Theorem \ref{thm:ve_critical_points}, which converges to a solution of \eqref{eq:helium_equation}. 
		
	As in the previous section, let us denote by $\bar q$ the weak limit of $q^{\ve}$. As a first step, we show that, if $\bar q_j(t^*) =\bar q_{j+1}(t^*)$ for some $j$ and $t^*\in [0,T]$, then $t^*=0$. This will allow us to show that $(q^\ve)$ has a subsequence converging in $C^2([\delta,T])$ to a solution of \eqref{eq:helium_equation}, for any $\delta>0$ (see Lemma \ref{lemma:concavity_q1}). Then, we show that any such solution cannot have a collision at $t=0$. In turn, this will imply that the convergence is in $C^2([0,T])$ for $q_i^\ve$ and $i\ge 2.$
		
	Let us assume that there exists indeed an instant $t^*>0$ for which $\bar q_j(t^*) = \bar q_{j+1}(t^*)$ for some $j$. Thanks to Lemma \ref{lemma:concavity_q1}, we know that there exists a constant $C>0$ such that  
	\[
		\liminf\limits_{\ve\to 0} \min_{t\in [t^*/2,T]}q^\ve_1(t)>C.
	\]
	Let us consider the energy  $h_\ve$ of $q^\ve$.  Thanks to point $ii)$ of Lemma \ref{lemma:uniform_bounds}, we know it is uniformly bounded. However, evaluating $h_\ve$ at $t^*$ we obtain:
	\[
		\liminf\limits_{\ve\to 0} h_\ve \ge \lim\limits_{\ve\to 0}\left[ - \sum_{i=1}^n f_i^\ve(q_i^\ve(t^*))+\sum_{j<i}g_{ij}(q^\ve_i(t^*)-q^\ve_j(t^*))\right] = +\infty,
	\]
	yielding a contradiction. Thus, there are no collisions on $(0,T].$
		
	We now show that, up to subsequence, $(q^\ve)$ converges in the $C^2$ norm to $\bar q$ on any interval $[\delta, T]$. Thanks to Lemma \ref{lemma:concavity_q1} and what we have just proved, we know that $\min_{i, t \in[\delta, T]} \{q_i^\ve(t),q^\ve_{i+1}(t)-q_i^\ve(t)\} \ge c(\delta)$ for some strictly positive constant $c(\delta)$ independent on $\ve$. Combining this observation with point $ii)$ of Lemma \ref{lemma:uniform_bounds}, we know that the sequence of derivatives $\dot{q}_i^\ve$ is  uniformly bounded in $C^0([\delta,T])$. Using the equation for $\ddot{q}_i^\ve$, we easily see that they are also equicontinuous. The same argument applies to $\ddot{q}_i^\ve$ as well. The statement now follows from an application of Ascoli-Arzelà Theorem (see \cite[Theorem 3.4]{helium_frozen}). Let us remark that, after proving $C^2$ convergence, we can assume that the sequence of energies $h_\ve$ converges to some constant $\bar h$ and that $\bar q$ satisfies the energy equation, with energy $\bar{h}$, for $t\in(0,T]$. 
		
	Now, we show that collisions are possible only if they are collisions with the origin. Let us assume now that there exists a $j$ such that $\bar q_j(0) = \bar q_{j-1}(0) = q_0 >0$. Without loss of generality, we can assume that $j$ is the largest with this property. Similarly, define $l$ to be the smallest  index such that $\bar q_{l+1}(0)=\bar q_l(0) = q_0$. Clearly $j>l$. Let us consider the following \emph{cluster energy}
	\[
		h_{jl} = \sum_{i=l}^j\frac12 \dot{\bar{q}}^2_i -f_i(\bar q_i)+\sum_{l\le i<k\le j} g_{ik} (\bar q_k-\bar q_i).
	\]
	A straightforward computations shows that:
	\begin{equation}\label{eq:derivative_cluser_energy}
		\begin{aligned}
	  	\dot{h}_{jl} &= \sum_{i=l}^j  \dot{\bar{q}}_i \left( \ddot{\bar{q}}_i -f'_i(\bar q_i)+\sum_{l\le k< i} g'_{ik} (\bar q_k-\bar q_i)-\sum_{i<k\le j} g'_{ik} (\bar q_k-\bar q_i)\right) \\&= \sum_{i=l}^j \, \dot{\bar q}_i \left(\sum_{k>j } g'_{ik} (\bar q_k-\bar q_i) -\sum_{k<l } g'_{ik} (\bar q_k-\bar q_i)\right) .
		\end{aligned}
	\end{equation}
    Moreover, there exists some positive constant $C_1$ such that 
    \begin{equation*}
      	\max \left\lbrace \Vert g'_{ik}(\bar q_i-\bar q_k)\Vert_{\infty} \,:\, k<l \text{ or }k>j,\, l\le i\le j \, \right\rbrace\le C_1,
    \end{equation*}
    since we are assuming that $\bar q_{j+1}(0)> \bar q_{j}(0)$, $\bar q_{l-1}(0)<\bar q_l(0)$ and we have already proved that there is no collision at $t>0$. Integrating \eqref{eq:derivative_cluser_energy} over $(\delta,t]$, for $0<\delta<t$, and using that $\dot{\bar q}$ is in $L^2$, we obtain:
    \begin{equation*}
    	\limsup\limits_{\delta\to 0}\vert h_{jl}(t)-h_{jl} (\delta) \vert \le C_2,
	\end{equation*}
    for some positive constant $C_2$.
	However, since $\bar q_l$ remains bounded away from zero, this implies that $\lim_{\delta\to 0^+}h_{jl}(\delta) =+\infty$, a contradiction. 
	
	Thus, we can assume that, if a collision occurs, it is with the origin and there exists a $j>1$ such that $\bar q_{j+1}(0)>0$ and $\bar q_j(0)=0$. Let us consider the motion equation for $\bar q_{j}- \bar q_1$, holding for $t>0$. It reads:
	\begin{equation}
		\label{eq:differenza_q_j_q_1}
		\begin{aligned}
		\ddot {\bar q}_{j}- \ddot{\bar q}_1 &= f_{j}'(\bar q_{j})-f_1'(\bar q_1) -\left( \sum_{k=1}^{j-1}g'_{jk}(\bar q_{j}-\bar q_k)+\sum_{k=2}^{j}g'_{1k}(\bar q_{1}-\bar q_k)\right) \\
		&\quad + \sum_{k=j+1}^{n}g'_{kj}(\bar q_k-\bar q_j)-g'_{k1}(\bar q_k-\bar q_{1}).
		\end{aligned}
	\end{equation}
	Let us observe that, thanks to assumption \eqref{assumption:convexity}, the term $f_{j}'(\bar q_{j})-f_1'(\bar q_1)$ is bounded by below by a constant $-C_3$. Moreover, there exists some positive constant $C_4$ such that 
	\begin{equation*}	  
		\left\Vert\sum_{k=j+1}^{n}g'_{kj}(\bar q_k-\bar q_j)-g'_{k1}(\bar q_k-\bar q_1) \right\Vert_{\infty} \le C_4.
	\end{equation*}
	Indeed, we are assuming that $\bar q_{j+1}(0)> \bar q_{j}(0)$ and we have already proved that there are no collisions in the limit $\bar{q}$ for $t>0$. Hence, $\bar q_{j+1}-\bar q_1$ is positive on $[0,T]$.
	
	Let us now integrate $\ddot {\bar q}_{j}- \ddot{\bar q}_1 $ over $[\delta,T]$. Using that $\dot {\bar q}_{j}(T)- \dot{\bar q}_1(T) =0$, we obtain:
	\begin{equation*}
		\dot {\bar q}_{j}(\delta)- \dot{\bar q}_1(\delta)\le 2(C_3+C_4) T +2 \left( \int_\delta^T \sum_{k=1}^{j}g'_{jk}( \bar q_{j}- q_k)+g'_{1k}(\bar q_{1}-\bar q_k) \right).
	\end{equation*} 
	Let us observe that $\bar{q}_j-\bar{q}_1$ is convex as soon as $g'_{j1}(q_j-q_1)\le- (C_3+C_4)$. Since $\bar{q}_j-\bar q_1 \ge0$ and $\bar{q}_j(0)-\bar q_1(0) = 0$, there exists a small interval $[0,\tau]$ on which $\dot{\bar{q}}_j-\dot{\bar{q}}_1 \ge0.$
	Finally let us observe that, thanks to convexity, $\lim_{\delta\to 0^+}\dot{\bar{q}}_j(\delta)-\dot{\bar{q}}_1(\delta) = C_5$ exists finite and is non negative. Thus $\bar q_j-\bar q_1 \le C_6 t$ for a positive constant $C_6$ and $t \in [0,\tau]$. However, assumption \eqref{assumption:homogenuity} implies that:
	\begin{equation*}
		g_{1,j}'(s)\le -\alpha g_{1j}(1)s^{-(\alpha+1)} \text{ for all } s \in (0,1].
	\end{equation*}
	Thus the integral $\int_\delta^Tg'_{1j}(\bar{q}_j-\bar{q}_1)\to -\infty$ as $\delta \to 0^+$ and therefore  $\dot{\bar{q}}_j(\delta)-\dot{\bar{q}}_1(\delta) \to -\infty$ as $\delta\to0^+$, a  contradiction.
	\end{proof}
	  
 \section{Zero-charge case}
 \label{sec:zero_charge}
 Let $f_i$ and $g_{ij}$ satisfy assumptions \eqref{assumption:values_f_g}-\eqref{assumption:convexity} and let  $\mu \in(0,1]$. In this section we consider the asymptotic behaviour of frozen planet orbits of
 \begin{equation}
 	\label{eq:helium_equation_mu}
 	\ddot{q}_i = f_i'(\vert q_i \vert ) - \mu\sum_{j=1}^{i-1} g'_{ij}(\vert q_i-q_j\vert )+\mu\sum_{j = i+1}^n g'_{ij}(\vert q_j-q_i \vert),
 \end{equation} 
 as the positive parameter $\mu$ tends to zero. Before diving into this analysis, we need to make some preliminary observations concerning the \emph{action value} of the frozen planet orbits obtained in Theorem \ref{thm:solution_helium}. To this extent, we need to take a closer look at the auxiliary minimization problem introduced in Section \ref{sec:boundary_disk} in  \eqref{eq:minimization_problem_kepler}-\eqref{eq:c_0}. Let us introduce two small parameters $0\le\ve_1\le\ve_2$ and consider the functional
\begin{equation*}
	\mathcal{F}_{\ve_1,\ve_2}(y) = \int_0^T	 \frac12 \vert \dot y_0 \vert^2+f^{\ve_1}_{1}(y_0)+ \frac12 \sum_{i=1}^{n-1}\vert \dot y_i \vert^2+f^{\ve_2}_{i+1}(y_i),
\end{equation*}
and its minimum level
\[
  c_0^{\ve_1,\ve_2} = \min_{y \in \mathcal{C}} \mathcal{F}_{\ve_1,\ve_2}(y),
\]
where \[
	\mathcal{C} := \{ y \in H^1([0,T],\mathbb R^n) :y_0(0)=0,\, y_{2k}(T) = y_{2k+1}(0), \, y_{2k+1}(T) = y_{2k}(0)\}
	\]
	for $0\le k\le\lfloor (n-1)/2 \rfloor$. Let us denote by $x_{\ve_1,\ve_2}$ a minimizer of $\mathcal{F}_{\ve_1,\ve_2}$ in $\mathcal{C}$. 
Similarly, we consider the functional	\[\mathcal{F}(y) = \int_0^T \frac12 \sum_{i=0}^{n-1}\vert \dot y_i \vert^2+f_{i+1}(y_i)
\]
defined using the unregularized potential.  Its minimum is denoted by
	\[
		c_0 = \min_{y \in \mathcal{C}} \mathcal{F}(y).
    \]
We will call $x$ any minimizer of $\mathcal{F}$ in $\mathcal{C}$.
Let us point out that the potential functions involved in the definition of $\mathcal{F}_{\ve_1,\ve_2}$ satisfy additionally assumption \eqref{assumption:extra_assumptions_f} and this definition agrees with the one given in \eqref{eq:minimization_problem_kepler}. Moreover, if $\ve_1=\ve_2$, we recover the approximating problems studied in Section \ref{sec:proof_thm1}.
\begin{lemma}
	\label{lemma:action_level}
	The following facts hold true:
	\begin{enumerate}[label=\roman*)]
		\item For any $\ve_2\ge \ve_1 \ge 0$, if  $x_{\ve_1,\ve_2} = (x_{\ve_1,\ve_2}^0,\dots,x_{\ve_1,\ve_2}^{n-1})$ is a minimizer of $\mathcal{F}_{\ve_1,\ve_2}$ in $\mathcal{
        C}$, then $x_{\ve_1,\ve_2}^i$ is increasing when $i$ is even and decreasing when $i$ is odd.
		\item For any $\ve_2\ge \ve_1\ge 0,$ the minimizers $x_{\ve_1,\ve_2}$ are unique and are given by folded $nT-$brake orbits, as defined in \eqref{eq:nT_brake}.
		\item For any $\ve_2\ge\ve_1\ge 0$, $c^{\ve_1,\ve_2}_0 \le c^{\ve_1,\ve_1}_0\le c_0$ and $c_0^{\ve_1,\ve_2} \to c_0$ as $\ve_1,\ve_2 \to0$. Moreover, there exists $\ve_2^0>0$ such that, for any $0<\ve_1\le\ve_2\le\ve_2^0$, $c^{\ve_1,\ve_2}_0 = c_0^{\ve_1,\ve_1}$, i.e., it does not depend on $\ve_2$.
	\end{enumerate} 	
\begin{proof}
	Point $i)$ follows from the monotonicity of $f_1^{\ve_1}$ and $f_j^{\ve_2}$. Indeed, critical points of $\mathcal{F}_{\ve_1,\ve_2}$ solve the equations $\ddot{q}_j = (f_j^{\ve})'(q_j)$, so each $q_j$ has at most one maximum point. Let us assume that $\dot{q}_j(t^*) =0$ for  $t^*\in(0,T)$ and define:
	\begin{equation*}
		\tilde q_j(t) = 
		\begin{cases}
            q_j(t) \text{ if } t\le t^* \\
           2 q_j(t^*)- q_j(t) \text{ otherwise}
 		\end{cases}.
	\end{equation*}
    It is straightforward to check that, defining  $\tilde q_k(t) =  q_k(t)$ for $k<j$ and $\tilde q_k(t) = q_k(t) +2( q_j(t^*)- q_j(T))$ for $k\ge j+1$, we end up with a curve with lower action.

   Point $ii)$ is a consequence of assumption \eqref{assumption:convexity}. Any $x_{\ve_1,\ve_2} \in \mathcal{C}$ minimizer determines a $C^1$ curve $\eta$ on $[0,nT]$ setting 
   \[
   \eta(t) = \begin{cases}
   x_{\ve_1,\ve_2}^k\left (t-k T\right), &\text{ if }t \in [kT,(k+1)T] \text{ and $k$ even},\\
    x_{\ve_1,\ve_2}^k\left ((k+1) T-t\right), &\text{ if }t \in [kT,(k+1)T] \text{ and $k$ odd}
   \end{cases}
   \]
   for $k=0,\dots,n-1$, satisfying $\eta(0) =0$ and $\dot\eta(T) =0$. Assume that two minimizers exist and let $\eta_1$ and $\eta_2$ be the corresponding $C^1$ curves. We may assume that either $\eta_2\ge \eta_1$ or $\eta_1\ge \eta_2$, otherwise we would be able to produce minimizers which are not $C^1$. Thus, assume that $\eta_2\ge \eta_1$. It follows that $\eta_2-\eta_1$ is convex, equal to zero at $t=0$ and positive with a minimum at $nT$. Thus $\eta_1 = \eta_2$. 
   
   We prove $iii)$. Since $f_j^{\ve_i} \le f_j$, then $c_0^{\ve_1,\ve_2}\le c_0^{\ve_1,\ve_1}\le c_0$. An application of Fatou Lemma shows that $c_0^{\ve_1,\ve_2}\to c_0$ as $\ve_2,\ve_1\to 0$ (see the proof of \cite[Lemma A.1]{helium_frozen}).
   The last assertion follows from the concavity of minimizers (compare with Lemma \ref{lemma:concavity_q1}). An application of the fundamental theorem of calculus yields 
   \[
      x_{\ve_1,\ve_2}^0(t) = \int_0^t \dot{x}_{\ve_1,\ve_2}^0(\tau) d\tau =\dot{x}_{\ve_1,\ve_2}^0 (T) t - \int_0^t\int_\tau^T (f^{\ve_1}_1)'( x_{\ve_1,\ve_2}^0(s))dsd\tau.
   \]
   Observe that $\dot{x}_{\ve_1,\ve_2}^0(T)\ge0$ since $\eta$ is increasing. 
   If ${x}_{\ve_1,\ve_2}^0(T)$ were to converge to zero as $\ve_2\to 0$, an application of Fatou Lemma would yield a contradiction since 
   \[
         0\ge-\frac{T^2(f_1^{\ve_1})'(0)}{2}>0.
   \]
   Thus $ x_{\ve_1,\ve_2}^j(t)\ge  x_{\ve_1,\ve_2}^0(T)>\ve_2$ for any $t\in[0,T]$ and $j\ge1$, provided that $\ve_2$ is small enough. So $f_j^{\ve_2}( x_{\ve_1,\ve_2}^j) = f_j( x_{\ve_1,\ve_2}^j)$ for all $j\ge1$.
\end{proof}
\end{lemma}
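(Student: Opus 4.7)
The plan is to handle the three claims in sequence, exploiting the crucial structural observation that $\mathcal{F}_{\ve_1,\ve_2}$ is a sum of decoupled one-particle Lagrangians, coupled only through the affine boundary constraints defining $\mathcal{C}$. Each statement then reduces to a one-particle ODE question plus a gluing argument across the constraints. For part (i), each component $x^i_{\ve_1,\ve_2}$ of a minimizer satisfies the autonomous Euler-Lagrange equation $\ddot x^i = (f_{i+1}^\ve)'(x^i)$ with $(f_{i+1}^\ve)' < 0$, hence is strictly concave in $t$. I would rule out interior critical points by a reflection argument: if $\dot x^i(t^*) = 0$ for some $t^* \in (0,T)$, I would reflect $x^i\vert_{[t^*, T]}$ across the horizontal line at height $x^i(t^*)$ and absorb the resulting endpoint shift into a compatible constant translation of all subsequent components. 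Since $f_{i+1}^\ve$ is strictly decreasing, the reflected profile takes strictly larger values, preserving kinetic energy but strictly lowering the potential contribution, contradicting minimality. Once monotonicity of each component is established, its direction (even index increasing, odd index decreasing) is forced by the chain of boundary identifications in $\mathcal{C}$ starting from $x^0(0) = 0$.

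For part (ii), the monotonicity from (i) permits pasting the components into a single curve $\eta : [0,nT] \to \mathbb{R}$, by inserting $x^i$ directly or its time-reversal according to parity. The pasted $\eta$ satisfies the non-autonomous brake problem $\ddot\eta = f_{k+1}'(\eta)$ on $[kT, (k+1)T]$ with $\eta(0) = 0$ and $\dot\eta(nT) = 0$; global $C^1$-regularity at internal junctions $t = kT$ follows from the natural transversality condition generated by free variation of the matching values in $\mathcal{C}$. For uniqueness, given two brake profiles $\eta_1, \eta_2$, the pair $(\min\{\eta_1,\eta_2\}, \max\{\eta_1,\eta_2\})$ has the same total action (both pieces remaining admissible), so I may assume $\eta_2 \ge \eta_1$. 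Convexity of every $f_{k+1}$ forces $\eta_2 - \eta_1$ to be convex on each subinterval, and $C^1$-matching extends this to global convexity on $[0, nT]$. Then the boundary data $(\eta_2 - \eta_1)(0) = 0$ and $(\dot\eta_2-\dot\eta_1)(nT) = 0$, together with non-negativity, force the difference to be identically zero.

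For part (iii), the inequalities $c_0^{\ve_1,\ve_2} \le c_0^{\ve_1,\ve_1} \le c_0$ are immediate from the pointwise bound $f_j^{\ve_2} \le f_j^{\ve_1} \le f_j$ obtained in Lemma \ref{lemma:smoothing_f}. For the limit $c_0^{\ve_1,\ve_2} \to c_0$ as $\ve_1, \ve_2 \to 0$, I would extract a weakly convergent subsequence of minimizers using the uniform kinetic bound provided by \eqref{assumption:homogenuity} together with the upper bound $c_0^{\ve_1,\ve_2} \le c_0$, and apply Fatou's lemma to the monotone convergence $f_j^{\ve} \nearrow f_j$ to conclude $c_0 \le \liminf c_0^{\ve_1,\ve_2}$. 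The $\ve_2$-stabilization reduces to a uniform lower bound on the range of each $x^i_{\ve_1,\ve_2}$ for $i \ge 1$ depending only on $\ve_1$: integrating the Euler-Lagrange equation for $x^0$ twice, using the monotonicity $\dot x^0 \ge 0$ from (i) and the lower bound $|(f_1^{\ve_1})'(s)| \ge \nu(\ve_1) > 0$ on a neighbourhood of zero supplied by \eqref{assumption:extra_assumptions_f}, yields $x^0(T) \ge \nu(\ve_1) T^2/2$; by the global monotonicity of the pasted curve $\eta$, this lower bound propagates to every $x^i$ for $i \ge 1$, so that $f_j^{\ve_2}$ and $f_j$ agree on the range of the minimizer as soon as $\ve_2 < \nu(\ve_1) T^2/2$.

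The main obstacle I anticipate is the careful bookkeeping required to paste the components into a globally $C^1$-monotone curve $\eta$ in part (ii). Specifically, one must read off the correct natural boundary conditions from the affine constraints in $\mathcal{C}$ and verify that the monotonicity directions from (i) match consistently across all junctions $t = kT$, so that the global curve is monotone increasing on all of $[0, nT]$ rather than just piecewise monotone with sign changes at the joints. Once this pasting is validated, both the convexity-based uniqueness in (ii) and the lower-bound propagation in (iii) go through cleanly, since both arguments rely fundamentally on the global monotonicity of $\eta$.
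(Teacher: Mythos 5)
Your proposal follows essentially the same route as the paper's proof for all three parts: the reflection across the horizontal line at an interior critical point (with constant translation of the downstream components) for (i), the gluing of the components into a $C^1$ curve $\eta$ on $[0,nT]$ together with convexity of $\eta_2-\eta_1$ for (ii), and the double-integral lower bound on $x^0(T)$ via the fundamental theorem of calculus plus a Fatou-type argument for (iii). The only step that needs tightening is the asserted inequality $x^0(T)\ge\nu(\ve_1)T^2/2$: this bound requires $x^0$ to remain in the neighbourhood of the origin on which the derivative bound holds (otherwise one only obtains that $x^0(T)$ exceeds the radius of that neighbourhood), and to extract a threshold $\ve_2^0$ that is uniform over all admissible pairs $(\ve_1,\ve_2)$ the neighbourhood and the constant $\nu$ must be chosen independently of $\ve_1$ — which is exactly what the paper's Fatou-lemma contradiction achieves by sending $x^0(T)$ to zero and comparing with $-(f_1^{\ve_1})'(0)\,T^2/2$.
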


In particular, if $c_\mu$ denotes the action level of a $\mu-$frozen planet orbit obtained in Theorem \ref{thm:solution_helium}, we have $c_\mu\le c_0$ (see point $iii)$ of Lemma \ref{lemma:construction_boundary} and Lemma \ref{lemma:uniform_bounds}).

\begin{lemma}\label{lem:minimisers_D} 
Let $\bar{\mathcal{H}}$ be the weak $H^1$-closure of $\mathcal{H}$ and consider the set
\[
	\mathcal{D} =\left \{q\in\bar{\mathcal{H}} : \min_{t\in[0,T]} q_i(t)-q_{i-1}(t) = 0 \text{ for all }i\right \}.
\]
Then, any minimizer of $\mathcal{F}_{\ve_1,\ve_2}$ in $\mathcal{D}$ is also a minimizer in the set $\mathcal{C}$. 
\begin{proof}
	Let us first show that minimizers of $\mathcal{F}_{\ve_1,\ve_2}$ in $\mathcal{D}$ have exactly one collision instant between $q_j$ and $q_{j+1}$ for each $j$. 
	
	Let $t^* \in (0,T]$ be a collision instant between $q_j$ and $q_{j+1}$. Now, we show that $\sum_{k =j+1}^n \dot{q}_k$ is non-positive a.e. on $[0,t^*]$. Let us define:
	\[
	A_+ = \left\lbrace t<t^*:   \sum_{k =j+1}^n \dot{q}_k(t)> 0\right\rbrace.
	\]
	Assume by contradiction that $A_+$ has positive measure $m(A_+)$ and define the function:
	\[
	\varphi(t) = m(A_+)-\int_0^t \chi_{A_+}(\tau)d\tau.       	
	\]
	By construction $\varphi(t^*) =0$, $\varphi\ge0$ and it is not identically zero. Consider the variation $q +\ve \varphi w_j$, where $w_j$ is defined in \eqref{eq:def_w_j} and $\ve\ge0$. The right partial derivative of $\mathcal{F}_{\ve_1,\ve_2}$ at zero reads:
	\[
	\partial_\ve \mathcal{F}_{\ve_1,\ve_2}(q+\ve \varphi w_j)\vert_{\ve=0} = \int_0^T \dot{\varphi} \sum_{k =j+1}^n \dot{q}_k+ \varphi  \sum_{k =j+1}^n (f^{\ve_2}_k)'(q_k) <0,
	\]
	and so $q$ cannot be a minimizer. A completely analogous statement holds for collision instants $t^*\in[0,T)$: $\sum_{k =j+1}^n \dot{q}_k$ is non-negative a.e. on $[t^*,T]$ .
	
	Let us now assume that $t^*<t^{**}$ are two collision instants between $q_j$ and $q_{j+1}$. By the previous argument, $\sum_{k =j+1}^n \dot{q}_k=0$ a.e. on $[t^*,t^{**}]$. Thus, for any positive function $\varphi$ with compact support on $[t^*,t^{**}]$ we have
	\[
	\partial_\ve \mathcal{F}_{\ve_1,\ve_2}(q+\ve \varphi w_j)\vert_{\ve=0} = \int_0^T \varphi  \sum_{k =j+1}^n (f^{\ve_2}_k)'(q_k) <0,
	\]
	which is again a contradiction. Thus, collision instants are unique.
	
	The next step is to prove that only double collisions are admissible and determine how the derivatives change after collision.	Assume that $t^* \in (0,T)$ is a collision instant which is not a double collision. Then, $q_{j+1}(t^*)=q_{j}(t^*)=q_{j-1}(t^*)$ for some $j$. Since $q_j$ has no other collisions, $\dot{q}_j(0) =\dot{q}_j(T) =0$. However, at least one between  $\dot{q}_{j-1}(0) $ and $\dot{q}_{j-1}(T)$ is equal to zero since $q_{j-1}$ has exactly one collision with $q_{j-2}$. Without loss of generality assume that $\dot{q}_{j-1}(0) =0 $. Then, $q_j-q_{j-1}$ is convex, positive and with a critical point at $0$, but vanishes at $t^*$, a contradiction. Thus, collisions at $t^*\in (0,T)$ are necessarily double.    
	
	Let us observe that derivatives at an internal collision point $t^*$ must satisfy an elastic reflection law. Indeed, let us fix $\ve>0$ small enough so that $[t^*-\ve,t^*+\ve]\subseteq(0,T)$ and there is no further collision. It is straightforward to see that the curve $\eta$ defined on $[-\ve,\ve]$ with value in $\mathbb{R}^2$
	\[
	\eta(s) = \begin{cases}
		(q_{j+1}(s+t^*),q_j(s+t^*)) \text{ if }s\le0\\
		(q_{j}(s+t^*),q_{j+1}(s+t^*)) \text{ if }s>0
	\end{cases}
	\] 
	minimizes the action $\tilde{\mathcal{A}}(\eta) = \int_{-\ve}^\ve\frac12\Vert \dot{\eta}\Vert^2+V(\eta)$, where $V$ is the function:
	\begin{equation*}
		V(\eta) = V(x,y) =\begin{cases}
			f_j^{\ve_j}(x)+f_{j+1}^{\ve_j}(y), \text{ if } x\le y\\
			f_j^{\ve_j}(y)+f_{j+1}^{\ve_j}(x), \text{ if }x>y
		\end{cases}
	\end{equation*}
    where $\ve_j$ is equal to $\ve_1$ if $j=1$ and $\ve_2$ if $j\ge2$.
	Since $V$ is Lipschitz continuous, $\eta$ must be $C^1$ and the assertion about the right/left derivatives follows.
	
	Let us observe that the collisions at the endpoints must be double too. Indeed, assume by contradiction that $q_{j+1}(T) = q_j(T) = q_{j-1}(T)$ for some $j$. Let us observe that necessarily $\dot{q}_j(0)=0$ and so, since $q_j$ is concave, ${q_j}(0)>q_j(T)$. The curve $\tilde q=(\tilde{q}_1,\ldots,\tilde{q}_n)$ defined as 
	\[
		\tilde{q}_k(t) = 
		\begin{cases}
			\begin{aligned}
			&q_k(t) &\text{if}\ k\le j \\
			&q_k(T-t)+q_j(0)-q_j(T) &\text{if}\ k>j
			\end{aligned}
		\end{cases}
	\]
	has lower action thanks to the monotonicity of the function $f_k^{\ve_j}.$ Therefore, collisions at $T$ are always double as well. In particular, we obtain that $\dot{q}_j(T) = -\dot{q}_{j+1}(T)$, for any collision at $T$. An analogous statement holds for collisions at $t=0.$ 
	
	We have thus proved that there exist exactly $n-1$ double collisions satisfying an elastic reflection's law. 
	
	Now, we show that there are no collision instants in $(0,T)$.  Let us consider the case of a collision between $q_n$ and $q_{n-1}$ at an internal point $t^*$. This implies that $\dot {q}_n(T) = \dot{q}_n(0) =0$. Without loss of generality, let us assume that $q_{n-1}$ has no collisions with $q_{n-2}$ on $[t^*,T]$. Then, the function
	\[
	\eta(t) = \begin{cases}
		q_n(t) \text{ if } t\le t^* \\
		q_{n-1}(t)\text{ if } t> t^*
	\end{cases}
	\]
	is concave and has two critical points, a contradiction. Thus $q_n$ and $q_{n-1}$ meet at one of the endpoints of the interval. Without loss of generality, let us assume that they meet at $T$. Let $t^*_1$ be the collision instant between $q_{n-1}$ and $q_{n-2}$, and assume by contradiction that it is an internal point. Hence, both the functions
	\[
	\eta_1 (t) = 
	\begin{cases}
		\begin{aligned}
		&q_{n-1}(t) &\text{if}\ t \in [0,t_1^*]\\
		&q_{n-2}(t) &\text{if}\ t \in [t_1^*,T]
		\end{aligned}
	\end{cases}
	\]
	\[
	\eta_2(t) = 
	\begin{cases}
		\begin{aligned}
		&q_n(t) &\text{if}\ t \in [0,T] \\
		&q_{n-1}(2T-t) &\text{if}\ t \in [T,2T-t_1^*] \\
		&q_{n-2}(2T-t) &\text{if}\ t \in [2T-t_1^*,2T] \\
		\end{aligned}
	\end{cases}
	\]
	are $C^1$ and concave in their respective domains. However, one of them has two critical points, and this is a contradiction.  Arguing iteratively, we see that there are no internal collisions and so the minimizer is in $\mathcal{C}$. 

    Let us observe that $q_1$ can only have a collision at $t=T$. Indeed, if $q_2(0) =0$, the function $\tilde{q}= (q_1,\tilde q_2, \dots, \tilde q_n)$ with 
    \[
       \tilde{q}_k(t) = q_k(T-t)+q_1(T), \quad  k \ge2    
    \]
    has lower action. This implies that the collision between $q_j$ and $q_{j+1}$ happens at $t=T$  if $j$ is odd and at $t=0$ if $j$ is even.
\end{proof}
\end{lemma}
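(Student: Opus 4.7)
The plan is to show that a minimizer $q$ of $\mathcal{F}_{\ve_1,\ve_2}$ over $\mathcal{D}$ automatically satisfies all the structural properties defining $\mathcal{C}$: each pair $(q_j,q_{j+1})$ has exactly one collision instant, this collision is double, and it occurs at one of the endpoints in the alternating pattern prescribed by $\mathcal{C}$. Once this structural rigidity is established, $q\in\mathcal{C}$ and hence it is a minimizer over $\mathcal{C}$ as well, since $\mathcal{C}\subset\mathcal{D}$.

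First, I would prove that for every fixed $j$ the collision between $q_j$ and $q_{j+1}$ happens at a unique instant $t^*$. The key tool is a one-sided variation along the vector $w_j$ of \eqref{eq:def_w_j}, multiplied by a non-negative cutoff. If $\sum_{k\ge j+1}\dot q_k$ failed to have the right sign on an interval to one side of $t^*$, the Gateaux derivative of $\mathcal{F}_{\ve_1,\ve_2}$ at $q$ in that admissible direction would be strictly negative, contradicting minimality. Two distinct collision instants would force $\sum_{k\ge j+1}\dot q_k\equiv 0$ on the interval between them, and then a compactly supported bump variation on that interval would still be admissible and produce a negative derivative because $(f_k^{\ve_2})'<0$.

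Next I would rule out triple or higher collisions at interior points, and then establish the elastic reflection law. Triples are excluded by the concavity of $q_j-q_{j-1}$ (which is forced by the sign of the equation and by $\dot q_j(0)=\dot q_j(T)=0$), together with monotonicity of the remaining pair. For double collisions at an interior $t^*$, I would reformulate the local problem as minimization of a planar action $\tilde{\mathcal{A}}(\eta)=\int\tfrac12|\dot\eta|^2+V(\eta)$ with $V$ Lipschitz, thereby forcing $\eta$ to be $C^1$ across $t^*$ and giving the reflection $\dot q_j(t^{*-})=\dot q_{j+1}(t^{*+})$.

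The main obstacle, and the step I would do last, is to exclude internal collisions entirely and obtain the alternating pattern of $\mathcal{C}$. The idea is to run an inductive argument from the outermost electron inward: if $q_n$ and $q_{n-1}$ collided at an interior $t^*$, then $\dot q_n(0)=\dot q_n(T)=0$ together with the concavity/reflection structure would produce a piecewise-$C^1$ concave function with two critical points, which is impossible. Hence their collision is at $t=0$ or $t=T$; a symmetric time-reversal argument combined with monotonicity of $f_k^{\ve_j}$ (as in the exchange $\tilde q_k(t)=q_k(T-t)+\mathrm{const}$) pins down which endpoint. Iterating inward determines all collision instants and produces the $\mathcal{C}$ boundary identifications. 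A final small variation shows $q_1$ can only collide with the nucleus at $t=T$, closing the argument.
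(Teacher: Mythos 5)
Your proposal follows the paper's proof step by step: one-sided $w_j$-variations with a nonnegative cutoff to show each pair has a unique collision instant, the ``bump on the interval between two collision instants'' argument, exclusion of triple interior collisions via boundary conditions and the sign of the difference $q_j-q_{j-1}$, the planar reformulation with a Lipschitz potential $V$ to deduce the $C^1$ reflection law, the ``concave function with two critical points'' induction from the outermost pair inward, and the final $\tilde q_k(t)=q_k(T-t)+q_1(T)$ variation to pin $q_1$'s collision at $t=T$. This is essentially the same route.

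One small slip worth flagging: in the triple-collision step you invoke the \emph{concavity} of $q_j-q_{j-1}$, but the paper's argument needs \emph{convexity}. A nonnegative concave function with a critical point at $t=0$ decreases away from $0$ and can perfectly well vanish at $t^*>0$, so concavity yields no contradiction. What kills the triple collision is convexity of the difference: a nonnegative convex function with vanishing derivative at $t=0$ is nondecreasing, hence cannot return to $0$ at $t^*>0$ unless it is identically zero on $[0,t^*]$, which would contradict uniqueness of the collision instant. (Convexity of $q_j-q_{j-1}$ comes from $f_j$ being convex together with the ordering hypothesis \eqref{assumption:convexity}, not from ``the sign of the equation,'' which only gives concavity of each $q_j$ separately.)
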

We are now in the position to prove Theorem \ref{thm:mu_to_0}
\begin{proof}[Proof of Theorem \ref{thm:mu_to_0}]
Let us begin by observing that, for fixed $\mu$, any solution $q^\mu$ of \eqref{eq:helium_equation_mu} solves the following system as well
	 \begin{equation}
	 	\label{eq:mu_frozen_auxiliary}
	 	\begin{cases}
	 		\ddot{q}_1 = f_1'(\vert q_1 \vert ) +\mu\sum_{j = 2}^n g'_{ij}(\vert q_j-q_i \vert) \\
	 		\ddot{q}_j =( f^{\ve_2}_j)'(\vert q_j \vert ) - \mu\sum_{k=1}^{j-1} g'_{jk}(\vert q_j-q_k\vert )+\mu\sum_{k = j+1}^n g'_{jk}(\vert q_k-q_j \vert)
	 	\end{cases}
	 \end{equation} 
	 for $j =2 , \dots, n$ provided $\ve_2$ is smaller than some $\ve_2(\mu)> 0$. This is so, since $q^\mu$ are collisionless and $q_1^\mu$ is concave.  On the other hand, thanks to Theorem \ref{thm:solution_helium}, for any fixed $\ve_2>0$ small enough and for any $\mu \in (0,1]$, there exists a solution $q^{\mu,\ve_2}$ of \eqref{eq:mu_frozen_auxiliary}. Indeed, the functions $f_1,f_2^{\ve_2},\dots f_n^{\ve_2}$ satisfy all the assumptions of Theorem \ref{thm:solution_helium} and so there exists a $2-$parameter family of $(\mu,\ve_2)-$frozen planet orbits $q^{\mu,\ve_2}$ solving \eqref{eq:mu_frozen_auxiliary}.
	 
	 For $j\ge 2$, let us integrate \eqref{eq:mu_frozen_auxiliary} over $[0,T]$. Taking into account the boundary conditions $\dot{q}_j^{\mu,\ve_2}(T) =\dot{q}_j^{\mu,\ve_2}(0)$, we obtain:
	 \begin{equation}	 	\label{eq:critical_point_eq_integrated}
	 	\int_0^T (f^{\ve_2}_j)'(q_j^{\mu,\ve_2}) +\mu \sum_{k=j+1}^n g_{kj}'(q_k^{\mu,\ve_2}-q^{\mu,\ve_2}_j) = \int_0^T\mu \sum_{k=1}^{j-1}  g_{kj}'(q_j^{\mu,\ve_2}-q^{\mu,\ve_2}_k).
	 \end{equation}
	 In particular, for any fixed $\ve_2>0$, the derivative $(f_j^{\ve_2})'(q_j^{\mu,\ve_2})$ is bounded on $[0,T]$ with respect to $\mu$ and thus, for $j=n$, $\int_0^T\mu \sum_{k=1}^{n-1}  g_{kn}'(q_n^{\mu,\ve_2}-q^{\mu,\ve_2}_k)$ is bounded as well. Taking instead $j=n-1$, we obtain the same conclusion for $\int_0^T\mu \sum_{k=1}^{n-2}  g_{k\,n-1}'(q_{n-1}^{\mu,\ve_2}-q^{\mu,\ve_2}_k)$. Iterating this procedure, we obtain that, for fixed $\ve_2$, the integrals $\mu\int_0^Tg_{jk}'(q^{\mu,\ve_2}_j-q^{\mu,\ve_2}_k)$ are bounded for all $j<k$ with respect to $\mu$. This implies that $\int_0^T \mu g_{ij}(q^{\mu,\ve_2}_j-q^{\mu,\ve_2}_i)$ converges to zero as $\mu\to0$. Indeed, let $\gamma = \gamma(\mu)$ be such that $ g_{ij} (\gamma) =1/ \sqrt{\mu}$. We have $\gamma\to 0 $ as $\mu \to 0$. Denote by $x = q^{\mu,\ve_2}_j-q^{\mu,\ve_2}_i.$ Thanks to assumption \eqref{assumption:homogenuity}, we have:
	\[
	\begin{aligned}
	 	\int_0^T \mu g_{ij}(x)&= \int_{x\le \gamma} \mu g_{ij}(x)+\int_{x> \gamma} \mu g_{ij}(x) \\
		&\le\int_{x\le \gamma} -\frac\mu\alpha g'_{ij}(x) x+\sqrt{\mu} T \\
	 	&\le -\frac{\mu \gamma }{\alpha }\int_0^Tg'_{ij}(x) +\sqrt{\mu} T \to 0\ \text{ as } \mu\to 0.
	\end{aligned} 
	\]
	Notice also that the functions $q^{\mu,\ve_2}$ are uniformly bounded in $H^1[0,T]$. This follows from an application of Lemma \ref{lemma:uniform_bounds}, since the bound on the action does not depend on $\mu$, neither on $\ve_2$. Let $\bar{q}^{\ve_2}$ denote a weak limit as $\mu$ tends to zero. We now prove that $\bar{q}^{\ve_2}$ lies in the set $\mathcal{D}$ given in Lemma \ref{lem:minimisers_D}. Indeed, assume by contradiction that $\bar{q}_j^{\ve_2}(t)>\bar{q}_{j-1}^{\ve_2}(t)$ for some $j$ and all $t\in[0,T]$, then the functions $\mu g_{ik}'(q^{\mu,\ve_2}_k-q^{\mu,\ve_2}_i)$ with $i<j\le k$ uniformly converge to 0 as $\mu\to 0$. Let us consider $y^{\mu,\ve_2} =\sum_{k=j}^n q_k^{\mu,\ve_2}$. From \eqref{eq:mu_frozen_auxiliary} and the boundary conditions we have $\dot y^{\mu,\ve_2}(0) = \dot y^{\mu,\ve_2}(T) = 0$ and 
    \[
    \ddot{y}^{\mu,\ve_2} = \sum_{k=j}^n\left(( f^{\ve_2}_k)'(q_k^{\mu,\ve_2}) - \mu\sum_{l=1}^{j-1} g'_{kl}( q_k^{\mu,\ve_2}-q_l^{\mu,\ve_2} )\right).
    \]
    Since the family $\{y^{\mu,\ve_2}\}_\mu$ is bounded in $H^1[0,T]$ it is pre-compact in the $C^0$ topology. Thanks to the equation, $\ddot{y}^{\mu,\ve_2}$ are bounded in the $C^0$ norm. Since $    \dot{y}^{\mu,\ve_2}(0) =0$ for all $\mu$, the family $\dot{y}^{\mu,\ve_2}$ is pre-compact in the $C^0$ topology as well. By an application of Ascoli-Arzelà Theorem, we thus find that up to subsequence the $y^{\mu,\ve_2}$ converge to a strictly concave function, having two critical points (in $t=0$ and $t =T$). A contradiction. Therefore, $\bar{q}^{\ve_2}\in\mathcal{D}$, for any $\ve_2$. 

	 Let $c_\mu^{\ve_2}$ denote the action level of $q^{\mu,\ve_2}$. Thanks to point $iii)$ of Lemma \ref{lemma:construction_boundary} and Lemma \ref{lemma:action_level}, we know that
	 \[
	     c_{\mu}^{\ve_2} \le c_0^{\ve_1,\ve_2}\le c_0^{0,\ve_2}\le c_0.
	 \]
	 Moreover, since we have proved that the repulsion terms converge to zero, we obtain that
		\begin{align*}
		 \limsup_\mu c_\mu^{\ve_2}  &= \limsup_\mu
		\int_0^T\frac12(\dot{q}_1^{\mu,\ve_2})^2+f_1(q_1^{\mu,\ve_2})+ \sum_{j=2}^n\frac12(\dot{q}_j^{\mu,\ve_2})^2+f_j^{\ve_2}(q_j^{\mu,\ve_2}) \\&= \limsup_\mu\mathcal{F}_{0,\ve_2}(q^{\mu,\ve_2})
		\end{align*}
	and the analogous equality for the $\liminf_\mu c_\mu^{\ve_2}$. Combining with the previous inequality we obtain
	\[
     c_0^{0,\ve_2}\ge\limsup_\mu c_\mu^{\ve_2}   \quad 
	\liminf_\mu \mathcal{F}_{0,\ve_2}(q^{\mu,\ve_2})\ge c_0^{0,\ve_2}.
	\]
     Thus, $\lim_\mu\mathcal{F}_{0,\ve_2}(q^{\mu,\ve_2})= c^{0,\ve_2}_0$. Recall that $\liminf_n( a_n+b_n) \ge \liminf a_n+\liminf_n b_n$ and thanks to Fatou Lemma and lower semi-continuity of the $L^2$ norm, we know that for all $j= 2, \dots, n$
     \begin{align*}
     \liminf_\mu \int_0^T f_j^{\ve_2}(q_j^{\mu,\ve_2}) &\ge \int_0^T f^{\ve_2}_j(\bar{q}_j^{\ve_2}), \\ \liminf_\mu \int_0^T f_1(q_1^{\mu,\ve_2}) &\ge \int_0^T f_1(\bar{q}_1^{\ve_2}),\\ \lim\inf_\mu \Vert \dot{q}^{\mu,\ve_2}\Vert_2 &\ge \Vert \dot{\bar{q}}^{\ve_2}\Vert_2.
     \end{align*}
     Since we have established  $\lim_\mu\mathcal{F}_{0,\ve_2}(q^{\mu,\ve_2})= c^{0,\ve_2}_0$, it follows that all inequalities are equalities. Recall that $\limsup(a_n+b_n)\ge \limsup_n a_n +\liminf b_n $ and thus
     \begin{align*}
     c_0^{0,\ve_2} &\ge \limsup_\mu  \frac12\Vert \dot{q}^{\mu,\ve_2}\Vert_2^2 + \liminf_\mu \int_0^T f_1(q_1^{\mu,\ve_2})+ \sum_{j=2}^n \liminf_\mu \int_0^T f_j^{\ve_2}(q_j^{\mu,\ve_2})\\
     &=\limsup_\mu \frac12 \Vert \dot{q}^{\mu,\ve_2}\Vert_2^2 +  \int_0^T f_1(\bar{q}_1^{\ve_2})+ \sum_{j=2}^n f_j^{\ve_2}(\bar{q}_j^{\ve_2})
     \end{align*}
     Thus, we can conclude that  $\lim_\mu  \Vert \dot{q}^{\mu,\ve_2}\Vert_2 = \Vert \dot{\bar{q}}^{\ve_2}\Vert_2$ and so
     \[
     \Vert q^{\mu,\ve_2}-\bar{q}^{\ve_2}\Vert_{H^1}^2 = \Vert \bar{q}^{\ve_2}\Vert_{H^1}^2+\Vert {q}^{\mu,\ve_2}\Vert_{H^1}^2-2 \langle  \bar{q}^{\ve_2}, q^{\ve_2,\mu}\rangle_{H^1} \to 0
     \]
     and hence the sequence converges strongly to a minimizer of $\mathcal{F}_{0,\ve_2}$ in $\mathcal{D}$. Note that, since the minimizer $x_{\ve_2}$ of $\mathcal{F}_{0,\ve_2}$ is unique, we have that for any sequence $\mu_m\to 0$ there exists a subsequence strongly converging to it. Thus, the whole family $q^{\mu,\ve_2}$ converges strongly to $x_{\ve_2}$ as $\mu\to 0$. Recall that, for $\ve_2$ small enough, the minimum value $c_0^{0,\ve_2}$ and the minimizer do not depend on $\ve_2$ (see point $iii)$ of Lemma  \ref{lemma:action_level}). Moreover, since the minimizers of $\mathcal{F}_{0,\ve_2}$ are $nT-$brake orbits, $q_2^{\ve_2}(t)\ge q_1^{\ve_2}(T)$ for all $t\in[0,T]$. However, $q_1^{\ve_2}(T)$ is uniformly bounded from below in $\ve_2$, again due to concavity (see the proof of $iii)$ in Lemma \ref{lemma:concavity_q1}). This means that we can assume that $q_j^{\mu,\ve_2}>\ve_2$ for $\mu<\mu_0(\ve_2)$ small enough and $j\ge2$. Thus, for this range of parameters, the functions $q^{\mu,\ve_2}$ solve both \eqref{eq:helium_equation_mu} and \eqref{eq:mu_frozen_auxiliary}. 
     
      To summarize,
      for any $\mu_m\to 0$ we can find a subsequence $\mu_{m_k}$ of $\mu_{m_k}$-frozen planet orbits converging  strongly to a minimizer of $\mathcal{F}$, a folded $nT-$brake. Thanks to Lemma \ref{lemma:action_level} folded $nT-$brakes are unique. This implies that the whole family of $\mu-$frozen planet orbits converges to the folded $nT-$brake since if that were not the case it would be possible to construct a sequence not converging to the folded $nT-$brake, contradicting what we have just proved. This concludes the proof. 
\end{proof}

\appendix
\section{Deformation Lemma}
\label{appendix}
In this section we prove the deformation Lemma needed in the proof of Theorem \ref{thm:general_existence_result}. Lemma \ref{lemma:critical_point} is a straightforward consequence of the following result.
\begin{lemma}[Deformation Lemma]
	\label{lemma:deformation_lemma}
	Let $\mathcal{U}$ be a open subset of a Hilbert space and let $\mathcal{A}\in C^{1,1}(\mathcal{U})$, $\mathcal{G}\in C^2(\mathcal{U})$. Assume that there exists real numbers $c>c^*$ and $b>b^*$ such that 
	\begin{enumerate}[label = \roman*)]
		\item $\overline{\{\mathcal{A}\le c\}\cap\{\mathcal{G}\le b\}}\subset \mathcal{U}$ and it is bounded.
		\item The level sets $b$ of $\mathcal{G}$ are regular, i.e.
		\begin{equation*}    
			\nabla \mathcal{G}(x) \ne 0, \text{ for any }x\in\{\mathcal{A}\le c\}\cap\{\mathcal{G}\ge b^*\}.
		\end{equation*}
		\item Any sequence $(x_n) \subseteq \mathcal{U}$ such that
		\begin{equation*}
	 			\mathcal{A} (x_n) \to  c^*,\quad  \limsup\limits_{n\to+\infty} \mathcal{G}(x_n)\le b, \quad \nabla \mathcal A(x_n) \to 0
		\end{equation*}
		has a convergent subsequence.
		\item Any sequence $(x_n) \subseteq \mathcal{U}$ such that
		\begin{equation*}
			\mathcal{A} (x_n) \to  c^*,\quad  \mathcal{G}(x_n) \to b, \quad \nabla \mathcal A(x_n)-\lambda_n \nabla  \mathcal{G}(x_n) \to 0
		\end{equation*}
		for $\lambda_n\ge0$ has a convergent subsequence.
		\item For any $\lambda>0$ we have:    
		\begin{equation}\label{eq:lambda}
			\nabla \mathcal{A}(x)\ne \lambda \nabla \mathcal{G}(x) \text{ for any } x \in \{\mathcal{A}= c^*\}\cap \{\mathcal{G}= b\}.
		\end{equation}
		\item For any $\ve\in (0, c-c^*]$ consider the sets: 
		\begin{equation*}
			X_{\pm\ve}:= \{\mathcal{A}\le c^*\pm\ve\}\cup\left(\{\mathcal
			A \le c \}\cap \{\mathcal{G}\ge b \}\right),
		\end{equation*}
		the set $X_\ve$ cannot be deformed into $X_{-\ve}$.
	\end{enumerate}
	Then, there exists a critical point of $\mathcal{A}$ lying in $\{\mathcal A = c^*\}\cap \{\mathcal{G}\le b\}.$
	\begin{proof}
		Let us assume by contradiction that the set
		\[
		K = \{q \in \mathcal{U} : \nabla \mathcal{A}(q) =0, \mathcal{A}(q)=c^*, \mathcal{G}(q)\le b\}=\emptyset.
		\]
		Since the Palais-Smale condition $iii)$ holds at level $c^*$ on $\{\mathcal{G}\le b\}$, we can assume that $\exists \ve>0$ such that:
		\begin{equation}\label{eq:bound_on_nabla_A}
		\Vert\nabla \mathcal{A}\Vert\ge \ve \text{ whenever } \vert \mathcal{A}-c^*\vert\le \ve \text { and }\mathcal{G}
		\le b+\ve.
		\end{equation}
		The Palais-Smale condition $iv)$ holds as well. We can thus assume that $\ve$ is chosen so small that there exists some $\ve_1>0$ such that the following condition holds too
		\begin{equation}
			\label{eq:proportionality_gradients_proof_deformation}
			\left\Vert\nabla\mathcal{A}-\frac{\Vert\nabla \mathcal{A}\Vert}{\Vert\nabla\mathcal{G} \Vert}\nabla\mathcal{G} \right\Vert\ge \ve_1, \text{ whenever } \vert \mathcal{A}-c^*\vert\le \ve_1 \text{ and }  \vert \mathcal{G}-b \vert\le\ve_1 .
		\end{equation}
		Indeed, if this were not the case, there would exist a sequence $(x_n)$ satisfying 
		\[
		\left\Vert\nabla\mathcal{A}(x_n)-\frac{\Vert\nabla \mathcal{A}(x_n)\Vert}{\Vert\nabla\mathcal{G}(x_n) \Vert}\nabla\mathcal{G}(x_n) \right\Vert\to 0.
		\]
		Condition $iv)$ then would imply that, up to subsequence, $(x_n)$ converges either to an element of $K$ or to a solution of \eqref{eq:lambda}. However, we are assuming the former to be empty and we know the latter is not possible thanks to $v)$.
		
		Let us observe that \eqref{eq:proportionality_gradients_proof_deformation} has also the following consequence on the value of the cosine of the angle  between $\nabla \mathcal{A}$ and $\nabla \mathcal{G}$.
		Indeed:
		\[	        
			\ve_1^2\le\left\Vert\nabla\mathcal{A}-\frac{\Vert\nabla \mathcal{A}\Vert}{\Vert\nabla\mathcal{G} \Vert}\nabla\mathcal{G} \right\Vert^2 = 2 \Vert \nabla \mathcal{A}\Vert^2\left( 1-\frac{\langle  \nabla \mathcal{A}, \nabla \mathcal{G}\rangle }{\Vert \nabla \mathcal{A}\Vert \Vert \nabla \mathcal{G}\Vert}\right).
		\]
		Since $\mathcal{U}\supset\overline{\{\mathcal{A}\le c\}\cap\{\mathcal{G}\le b\}}$ and it is bounded, then $\nabla \mathcal{A}$ is bounded on subsets of the form $\{\mathcal{G}\le b+\ve\}\cap \{c^*+\ve \ge \mathcal{A}\ge c^*-\ve\}$ for $\ve>0$ small enough, we conclude that there exists an $\ve_2>0$ such that:
		\begin{equation}\label{eq:cosine}
		1-\ve_2 \ge \frac{\langle  \nabla \mathcal{A}, \nabla \mathcal{G}\rangle }{\Vert \nabla \mathcal{A}\Vert \Vert \nabla \mathcal{G}\Vert}
		\end{equation}
		whenever $\vert \mathcal{A}-c^*\vert\le \ve_1$ and $\vert \mathcal{G}-b \vert\le\ve_1$ .
		At this point we want to show that, if $K=\emptyset$, then we can deform $X_{\ve}$ into $X_{-\ve}$, thanks to a suitable gradient flow. 
		Let us consider a smooth monotone function $\varphi : \mathbb{R}\to [0,1]$ satisfying $\varphi(t) =1$ for $t\le \ve/2$ and $\varphi(t) = 0$ for $t\ge \ve$.
		Let us define:
		\begin{equation*}
			 Z(q) = \varphi\left(\vert \mathcal{A}(q)-c^*\vert\right) \varphi\left( \mathcal{G}(q)
			-b\right) \left( \frac{-\nabla \mathcal{A}(q) }{\Vert \nabla \mathcal{A}(q) \Vert}+\varphi(\vert \mathcal{G}(q)-b\vert) \frac{\nabla \mathcal{G}(q) }{\Vert \nabla \mathcal{G}(q) \Vert} \right),
		\end{equation*}
        and consider the flow $\eta_t(q)$ generated by $Z(q)$.
		Let us observe that $\mathcal{A}$ is decreasing along flow lines of $\eta_t$. Indeed, thanks to \eqref{eq:cosine}:
		\begin{align*}
			\frac{d}{dt}\mathcal{A}(\eta_t(q)) = \varphi\left(\vert \mathcal{A}-c^*\vert\right) \varphi\left( \mathcal{G}-b\right) \Vert \nabla \mathcal{A} \Vert\bigg(-1+ \varphi(\vert \mathcal{G}(q)-b\vert)\frac{\langle  \nabla \mathcal{A}, \nabla \mathcal{G}\rangle }{\Vert \nabla \mathcal{A}\Vert \Vert \nabla \mathcal{G}\Vert}  \bigg) \le 0.
		\end{align*}
		Moreover, recalling \eqref{eq:bound_on_nabla_A}, on the set $\{\vert \mathcal{A}-c^*\vert\le\ve/2\}\cap \{\mathcal{G}\le b+\ve/2\}$ it is strictly decreasing since
		\begin{equation}\label{eq:linear_growth}
			\frac{d}{dt}\mathcal{A}(\eta_t(q)) \le -\ve \, \ve_2<0.
		\end{equation} 
		Furthermore, $\mathcal{G}$ is non-decreasing whenever $\vert\mathcal{G}- b \vert\le \ve/2$ since
		\begin{align*}
			\frac{d}{dt}\mathcal{G}(\eta_t(q)) &=  \varphi\left(\vert \mathcal{A}-c^*\vert\right) \varphi\left( \mathcal{G}-b\right) \Vert \nabla \mathcal{G} \Vert\bigg(\varphi(\vert \mathcal{G}(q)-b\vert ) - \frac{\langle  \nabla \mathcal{A}, \nabla \mathcal{G}\rangle }{\Vert \nabla \mathcal{A}\Vert \Vert \nabla \mathcal{G}\Vert} \bigg)\\
			&=\varphi\left(\vert \mathcal{A}-c^*\vert\right) \varphi\left( \mathcal{G}-b\right) \Vert \nabla \mathcal{G} \Vert \left(1-\frac{\langle  \nabla \mathcal{A}, \nabla \mathcal{G}\rangle }{\Vert \nabla \mathcal{A}\Vert \Vert \nabla \mathcal{G}\Vert}\right) \ge 0.
		\end{align*}
		
		We have thus built a flow which leaves all the sublevels of $\mathcal{A}$ invariant and the set $\{\mathcal{G}\ge b\}$ as well. To reach a contradiction, we have to show that there exists $s>0$ such that $\eta_s$ maps $X_{\ve/2}$ to $X_{-\ve/2}$. Assume that this does not happen, i.e., for every $s$ there is a point $q_s\in X_{\ve/2}$ whose image at time $s$ does not belong to $ X_{-\ve/2}$. Since $\{\mathcal{G} \ge b\}$ is invariant, this means that $\mathcal{G}(\eta_s(q_s)) < b$ for all $s$ and $\mathcal{A}(\eta_s(q_s))>c^*-\ve/2$. However, since $\mathcal A$ is decreasing on flow lines, this means that \[ c^*+\ve/2>
		\mathcal{A}(\eta_t(q_s))>c^*-\ve/2, \forall t \in (0,s].
		\]
		However, for $s$ sufficiently large, from \eqref{eq:linear_growth} we have
		\[
			\mathcal{A}(\eta_s( q_s ) )- \mathcal{A}(q_s)\le -\ve \ve_2 s<-\ve,
		\]
		a contradiction. Thus there exists $s_0$ such that $\eta_{s_0}(X_{\ve/2}) \subset X_{-\ve/2}$.
        We can thus define the following functions on $X_{\ve/2}$
        \[
           T_1(x) =\inf_{t\ge0}\{ \eta_t(x) \in \{\mathcal{A}\le c^*\}\}, \quad T_2(x) = \inf_{t\ge0}\{ \eta_t(x) \in \{\mathcal{G}\ge b\}\}.
           \]
        Since $Z(q)$ is transverse to $\{\mathcal{A}\le c^*\}$ and $\{\mathcal{G}\ge b\}$, the maps $T_1(x)$ and $T_2(x)$ are continuous when finite. Since $X_{\ve/2}$ is mapped to $X_{-\ve/2}$ the map $T(x) = \min\{T_1(x),T_2(x)\}$ is always finite and continuous. It follows that $h(t,x) = \eta_{tT(x)}(x)$ gives a deformation retract contradicting
        the assumption $vi)$ of the Lemma.
		
	\end{proof}
\end{lemma}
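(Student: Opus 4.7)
The plan is to argue by contradiction: assuming that
\[
K = \{x \in \mathcal{U} : \nabla\mathcal{A}(x) = 0,\ \mathcal{A}(x) = c^*,\ \mathcal{G}(x) \le b\} = \emptyset,
\]
I would construct a continuous flow that deforms $X_\ve$ into $X_{-\ve}$ for some small $\ve > 0$, contradicting hypothesis $vi)$. The contradiction will come from tracking a vector field that simultaneously decreases $\mathcal{A}$ and leaves $\{\mathcal{G}\ge b\}$ positively invariant, so that the deformation respects the definition of $X_{\pm\ve}$.

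The first step is to exploit the compactness conditions $iii)$ and $iv)$ together with the transversality condition $v)$ to obtain quantitative estimates on a working region of the form
\[
W = \{|\mathcal{A}-c^*|\le \ve_0\}\cap\{|\mathcal{G}-b|\le \ve_0\}.
\]
Condition $iii)$, combined with $K = \emptyset$, yields $\|\nabla\mathcal{A}\| \ge \ve_0$ throughout a neighbourhood of $\{\mathcal{A}=c^*\}\cap\{\mathcal{G}\le b\}$. A second argument by contradiction, using $iv)$ and $v)$, shows that $\nabla\mathcal{A}$ and $\nabla\mathcal{G}$ cannot become arbitrarily close to positive collinearity on $W$ near $\{\mathcal{G}=b\}$: a violating sequence would, by $iv)$, converge to a point where $\nabla\mathcal{A}=\lambda\nabla\mathcal{G}$ with $\lambda\ge 0$, contradicting either $v)$ or $K=\emptyset$. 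This translates into a uniform bound $\cos\angle(\nabla\mathcal{A},\nabla\mathcal{G}) \le 1-\ve_2$ for some $\ve_2>0$.

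With these estimates in hand, I would define a pseudo-gradient vector field of the form
\[
Z(x) = \psi_1(|\mathcal{A}(x)-c^*|)\,\psi_2(\mathcal{G}(x)-b)\Bigl(-\tfrac{\nabla\mathcal{A}(x)}{\|\nabla\mathcal{A}(x)\|} + \varphi(|\mathcal{G}(x)-b|)\,\tfrac{\nabla\mathcal{G}(x)}{\|\nabla\mathcal{G}(x)\|}\Bigr),
\]
where $\psi_1,\psi_2,\varphi$ are smooth cutoffs equal to $1$ near the relevant levels and vanishing outside $W$. The field $Z$ is bounded and Lipschitz, so generates a global flow $\eta_t$. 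The angle estimate guarantees that $\mathcal{A}\circ\eta_t$ is strictly decreasing at rate at least $\ve_2$ in the active strip $|\mathcal{A}-c^*|\le\ve/2$, while the corrective term involving $\nabla\mathcal{G}$ makes $\mathcal{G}\circ\eta_t$ non-decreasing near $\{\mathcal{G}=b\}$, so that $\{\mathcal{G}\ge b\}$ is positively invariant and $X_{-\ve}\subseteq X_\ve$ stays stable. After a finite time $s_0$, $\eta_{s_0}$ maps $X_{\ve/2}$ inside $X_{-\ve/2}$. To upgrade this into a genuine deformation retract fixing $X_{-\ve}$, I would define a stopping-time function $T(x) = \min\{T_1(x),T_2(x)\}$, with $T_1(x)$ the first hitting time of $\{\mathcal{A}\le c^*-\ve\}$ and $T_2(x)$ that of $\{\mathcal{G}\ge b\}$. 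Transversality of $Z$ to these level sets makes $T$ continuous and finite on $X_\ve$, so $h(t,x) = \eta_{tT(x)}(x)$ is the desired deformation contradicting $vi)$.

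The main obstacle is the uniform angle estimate in the second paragraph: a pointwise application of $v)$ only gives non-vanishing of $\nabla\mathcal{A}-\lambda\nabla\mathcal{G}$ at each point of $\{\mathcal{A}=c^*\}\cap\{\mathcal{G}=b\}$, not a uniform lower bound on the angle. Upgrading to a uniform estimate requires precisely the constrained Palais--Smale condition $iv)$, which is tailor-made to handle sequences where the gradients become positively collinear. The boundedness assumption $i)$ is crucial here: it ensures $\|\nabla\mathcal{A}\|$ and $\|\nabla\mathcal{G}\|$ are controlled on $W$ by the $C^{1,1}$-regularity of $\mathcal{A}$, so that the normalizations in $Z$ are well defined and the cosine-style estimate makes sense. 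Once this quantitative step is in place, the rest of the argument — constructing $Z$, verifying the monotonicity of $\mathcal{A}$ and $\mathcal{G}$ along its flow, and defining the stopping time — is a standard gradient-flow deformation routine.
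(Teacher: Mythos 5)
Your proposal reproduces the paper's proof essentially step by step: contradiction via $K=\emptyset$, using $iii)$ for a lower bound on $\|\nabla\mathcal{A}\|$, using $iv)$ and $v)$ (via a sequence argument) to obtain a uniform angle bound $\cos\angle(\nabla\mathcal{A},\nabla\mathcal{G})\le 1-\ve_2$ on the working strip, then building the same cutoff-normalized pseudo-gradient field $Z$, checking the same monotonicity properties of $\mathcal{A}$ and $\mathcal{G}$ along the flow, and finishing with the same stopping-time construction to contradict $vi)$. This is the same approach as the paper, and the proposal is correct (you even pick the more natural target level $\{\mathcal{A}\le c^*-\ve\}$ for $T_1$, matching the intent of landing in $X_{-\ve}$).
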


\noindent {\bf Statement about conflict of interest.}  The authors declare that there are no conflict of interest.\\
\noindent {\bf Statement about data availability.}  The authors declare that all data involved in this research are fully available.

\bibliographystyle{plain}

\end{document}